\numberwithin{equation}{section}
\numberwithin{figure}{section}
\begin{document}

\newcommand{\s}{\vspace{0.2cm}}
\newcommand{\Sp}{\mbox{Sp}}
\newcommand{\sy}{\mbox{sym}}
\newcommand{\GP}{\mathcal{G}_p}
\newcommand{\FP}{\mathcal{F}_p}
\newcommand{\G}{\mathbb{Z}_p^2 \rtimes \mathbf{D}_3}
\newcommand{\Z}{\mathbb{Z}}
\newcommand{\D}{\mathbf{D}}
\newcommand{\Q}{\mathcal{Q}}
\newcommand{\HH}{\mathbb{H}}
\newcommand{\oph}{orientation-preserving homeomorphism}

\newtheorem{theo}{Theorem}
\newtheorem{prop}{Proposition}
\newtheorem{coro}{Corollary}
\newtheorem{lemm}{Lemma}
\newtheorem{claim}{Claim}
\newtheorem{conj}{Conjecture}
\newtheorem{example}{Example}
\theoremstyle{remark}
\newtheorem{rema}{\bf Remark}

\newtheorem*{rema1}{\bf Remark}
\newtheorem*{defi}{\bf Definition}
\newtheorem*{theo*}{\bf Theorem}
\newtheorem*{coro*}{Corollary}
\newtheorem*{conj*}{Conjecture}

\newtheorem{hecho}[theo]{\bf Hecho}

\title{$\mathbb{Z}_k^m$-actions of signature $(0; k, \stackrel{n+1}{\ldots}, k)$}
\date{}

\author{Rub\'en A. Hidalgo}
\address{Departamento de Matem\'atica y Estad\'{\i}stica,  Universidad de La Frontera,  Francisco Salazar 01145, Temuco, Chile}
\email{ruben.hidalgo@ufrontera.cl}

\author{Sebasti\'an Reyes-Carocca}
\address{Departamento de Matem\'aticas, Facultad de Ciencias, Universidad de Chile, Las Palmeras 3425, Santiago, Chile}
\email{sebastianreyes.c@uchile.cl}

\thanks{Partially supported by ANID Fondecyt Regular  Grants 1230001,  1220099 and 1230708}
\keywords{Riemann surfaces, group actions, automorphisms}
\subjclass[2010]{30F10, 14H37, 30F35, 14H30}

\begin{abstract} 
An action of a finite group $G$ is a pair $(S,\hat{G})$, where $S$ is a compact Riemann surface of genus $g \geqslant 2$ and $\hat{G} \leqslant {\rm Aut}(S)$ is isomorphic to $G$.
To each action $(S,\hat{G})$ there is associated a signature $(\gamma;k_{1},\ldots,k_{r})$ that codifies the orbifold structure of $S/\hat{G}$.
Two actions of $G$, say $(S_{1},G_{1})$ and $(S_{2},G_{2})$, are topologically equivalent if there is an orientation-preserving homeomorphism $\varphi:S_{1} \to S_{2}$ such that $\varphi G_{1} \varphi^{-1}=G_{2}$.  Topologically equivalent actions necessarily must have the same signature.  The problem of determining the number of different topological actions of $G$ for a given signature is in general a difficult task.
In this article, we describe, up to topological equivalence, those actions when $G$ is an abelian group and quotient genus $\gamma=0$. We are particularly interested in the case $G={\mathbb Z}_{k}^{m}$ and the quotient signature of the action to be of the form $(0;k,\stackrel{n+1}{\ldots},k)$.
\end{abstract}
\maketitle
\thispagestyle{empty}

\section{Introduction} 
Riemann surfaces have been proved to serve as fundamental objects bridging topology, geometry, algebra and complex analysis. The study of automorphisms of compact Riemann surfaces or,  equivalently, of smooth complex projective algebraic curves and their function fields, represents a classical and rich area of research in both complex and algebraic geometry. The foundations of this field {date back} to the nineteenth century, with seminal contributions from mathematicians such as Riemann, Klein and Jacobi. A central result, due to Schwarz \cite{Schwarz} and Hurwitz \cite{Hurwitz}, establishes that the group ${\rm Aut}(S)$ of conformal automorphisms of a compact Riemann surface $S$ of genus $g \geqslant 2$ is finite, and that its order is bounded by $84(g-1)$. In \cite{Greenberg}, Greenberg succeeded in proving that every finite group $G$ can be realised as a group of automorphisms of some compact Riemann surface of genus $g \geqslant 2$.

A {\it $G$-action of genus $g$} is a pair $(S,\hat{G})$, where $S$ is a compact Riemann surface of genus $g$ and $\hat{G} \leqslant {\rm Aut}(S)$ is isomorphic, as abstract group, to $G$. 
The {\it signature} of the action is the tuple $(\gamma; k_1, \ldots, k_r)$, where $\gamma$ is the genus of the quotient $S/\hat{G}$ and $2 \leqslant k_{1} \leqslant  \ldots \leqslant  k_r$ are the branch indices of the canonical projection $S \to S/\hat{G}$ (the integers $k_j$ are the orders of the cone points of the Riemann orbifold $S/\hat{G}$). We also say that the above is a {\it $G$-action of signature} $(\gamma;k_{1},\ldots,k_{r})$. Two $G$-actions $(S_{1},G_{1})$ and $(S_{2},G_{2})$ are called {\it topologically equivalent} if there exists an 
orientation-preserving homeomorphism 
$$\varphi : S_{1} \to  S_{2} \; \mbox{ such that } \varphi^{-1} G_{2}\varphi=G_{1}.$$ Note that topologically equivalent $G$-actions must be of the same genus and must have the same signature, but the converse is not in general true (for instance, there are two topologically inequivalent ${\mathbb Z}_{7}$-actions of genus three of signature $(0;7,7,7)$).

The importance of the topological classification of actions lies in several applications, some of which we briefly describe.
The moduli space $\mathscr{M}_g$, of isomorphism classes of compact Riemann surfaces of genus $g \geqslant 2$, has the structure of a complex analytic space of dimension $3g-3$. If 
$g \geqslant 4$, then its singular locus corresponds to the points representing compact Riemann surfaces with non-trivial automorphisms. i.e., 
$$\mbox{Sing}(\mathscr{M}_g)=\{[S] \in \mathscr{M}_g : \mbox{Aut}(S) \neq \{\mbox{id}\}\}.$$

 The moduli space --which is itself an algebraic variety defined over the field of  rational numbers-- is one of the most fascinating objects in algebraic geometry, and is at the core of important and recent developments in number theory and geometry. Let us fix a $G$-action $(S_{0},G_{0})$ of genus $g$.
The set $$\mathscr{M}_g(S_{0},G_{0})=\{[S] \in \mathscr{M}_g: \ \text{there is a $G$-action } (S, \hat{G}) \ \text{topological equivalent to } (S_{0},G_{0})\} \subset \mbox{Sing}(\mathscr{M}_g),$$ 
is an irreducible subvariety of the moduli space \cite{GG92}. In other words, the topological equivalence allows us to perform deformations of Riemann surfaces with automorphisms in a controlled manner. These subvarieties, in turn, provide a stratification of the moduli space \cite{B90}, which has been proved to serve as a useful toolkit to explore the largely unknown topology of $\mathscr{M}_g.$ For instance, it has played a key role in the determination of the connectedness of the singular locus of the moduli space (see, for instance, \cite{BCI} and the references therein) and has been recently employed in \cite{nos} to find new non-normal subvarieties of $\mathscr{M}_g$. Moreover, these subvarieties have been fruitful to study some aspects of families of Jacobian varieties with group action; for instance, isogeny decompositions and Shimura varieties (see, for example, \cite{F15} and \cite{IRR}).  Another important aspect to mention is that the conjugacy classes of finite subgroups of the mapping class group (homotopy classes of self-homeomorphisms) of a real orientable compact surface of genus $g$ are in bijective correspondence with the topological classes of actions of genus $g$. 
Sources for the characterization of topological actions by purely algebraic data include Nielsen \cite{Nielsen}, Harvey \cite{Harvey} and Gilman \cite{Gilman}. We also refer to \cite{BRT}, \cite{CGP}, \cite{K}, \cite{P23}, \cite{RR22} and \cite{LMFDB} as sources for the classification of topological actions in low genera, and also for computer-aided algorithms.

Determining the number of distinct topological $G$-actions for fixed genus $g \geqslant 2$ is, in general, a challenging problem. A first step is to compute all possible signatures for these $G$-actions. Secondly, for each such  signature $(\gamma;k_{1},\ldots,k_{r})$, one needs to compute the different topological $G$-actions with that signature. To compute all possible topologically different $G$-actions with the given signature, one may proceed as follows (see, for instance, \cite{BW07}). Consider the group 
$$\Gamma=\langle a_{1},b_{1},\ldots,a_{\gamma},b_{\gamma},x_{1},\ldots,x_{r}: x_{1}^{k_{1}}=\cdots=x_{r}^{k_{r}}=\prod_{j=1}^{\gamma}[a_{j},b_{j}] \prod_{i=1}^{r}x_{i}=1\rangle.$$
Each topological $G$-action, with the above given signature, corresponds to a torsion-free finite index normal subgroup $F$ of $\Gamma$ such that $\Gamma/F \cong G$. We denote by ${\mathcal F}$ the (finite) collection formed by such subgroups. If $F_{1}, F_{2} \in {\mathcal F}$ then they produce the same topological action if and only if there is a geometric automorphism $\psi$ of $\Gamma$ such that $F_{2}=\psi(F_{1})$.  These geometric automorphisms form an index two subgroup $B_{\Gamma}$ of ${\rm Aut}(\Gamma)$. As the group $B_{\Gamma}$ is infinite, this poses a challenge for practical computations. A natural question that arises is whether the infinite group 
$B_{\Gamma}$ can be replaced by a finite group, thereby rendering the computations feasible. If $G$ is abelian and $\gamma=0$, we will see how to deal with this problem (see Corollary \ref{abeliano3} and also \cite{BW07}).

In this article, we are particularly concerned with the above task for ${\mathbb Z}_{k}^{m}$-actions of signature $(0;k,\stackrel{n+1}{\ldots},k)$, where $n, k \geqslant 2$ and $1 \leqslant m \leqslant n$ are integers such that $(n-1)(k-1)>2$. As we shall see later, roughly speaking, in this situation we can replace the infinite group $\Gamma$ by ${\mathbb Z}_{k}^{n}$, and $B_{\Gamma}$ by a subgroup of ${\rm Aut}({\mathbb Z}_{k}^{n})$ which is isomorphic to the symmetric group $\mathbf{S}_{n+1}$. The first result of this paper is given in 
Theorem \ref{teorema2} and Corollary \ref{felix}, where we provide the description of the different topological ${\mathbb Z}_{k}^{m}$-actions of signature $(0;k,\stackrel{n+1}{\ldots},k).$ After that, we consider triples $(S,N,G)$, where $(S,N)$ is a ${\mathbb Z}_{k}^{m}$-action of signature $(0;k,\stackrel{n+1}{\ldots},k)$ and $N$ is a normal subgroup of $G \leqslant {\rm Aut}(S)$. Note that the quotient group $G/N$ induces a permutation of the $n+1$ cone points of $S/N$. We consider all those triples yielding the same permutational action, and say that two of them, say $(S_1,N_1,G_1)$ and $(S_2,N_2,G_2)$, are {\it topologically equivalent} if there exists an 
orientation-preserving homeomorphism $\varphi : S_{1} \to  S_{2}$ such that $\varphi^{-1} N_{2}\varphi=N_{1}$ and $\varphi^{-1} G_{2}\varphi=G_{1}.$ The second result of this paper is given in Theorem \ref{teorema3} and Corollary \ref{coca7}, where we describe these different topological triples. It is worth noting that two such triples might be topologically non-equivalent, and either the pairs $(S_{1},N_{1})$ and $(S_{2},N_{2})$ or the pairs $(S_{1},G_{1})$ and $(S_{2},G_{2})$ to be topologically equivalent (even, these pairs might be topologocally equivalent by using different homeomorphisms, but not for a common one).

For the sake of clarity, we work out the case of $\mathbb{Z}_p^2$-actions of signature $(0; p, \stackrel{n+1}{\ldots}, p)$, where $p$ is a prime integer. We provide explicit algebraic models for these Riemann surfaces in terms of fiber products of cyclic $p$-gonal algebraic curves, and give an isogeny decomposition of their Jacobian varieties. After that, we specialise our results for the cases $n =3$ and $n=5,$ as they are interesting in their own right. More precisely, we study $\mathbb{Z}_p^2$-actions of signature $(0; p, \stackrel{4}{\ldots}, p)$ and describe some examples in detail, and study $\mathbb{Z}_p^2$-actions of signature $(0; p, \stackrel{6}{\ldots}, p)$ that admit extra automorphisms and that form complex one-dimensional families. By the way, we recover and extend classical and recent results for some families of Riemann surfaces.  

\s

This paper is organised as follows. In \S\ref{prelis} we introduce some notation and briefly review some preliminaries concerning group actions on Riemann surfaces. In \S\ref{Sec:Abeliano}, we restrict to the case of abelian actions. After that, in \S\ref{CFG}, we review known results concerning generalised Fermat curves that will be used throughout the paper. In \S\ref{geoaut}, we introduce geometric automorphisms and relate them with a group of permutations. The main results are stated and proved in \S\ref{state}.

\section{Preliminaries}\label{prelis}

\subsection{Riemann orbifolds}
A compact {\it Riemann orbifold} ${\mathcal O}$ consists of a compact Riemann surface $S$, called its underlying Riemann surface structure, together a (possible empty) finite collection $\{(q_{j},k_{j})$\}, where $q_{j} \in S$, called its {\it cone points}, and $k_{j} \geqslant 2$ an integer, called the {\it branch order} of $q_{j}$. Up to permutation of indices, we may assume $2 \leqslant k_{1} \leqslant \cdots \leqslant k_{r}$. 
We say that the Riemann orbifold ${\mathcal O}$ has {\it signature} $(\gamma; k_{1},\ldots, k_{r})$.  If 
$\sum_{j=1}^{r} k_{j}^{-1} <2\gamma+r-2,$
then we say that the orbifold is {\it hyperbolic}.
Two Riemann orbifolds are called {\it topologically equivalent} (respectively, {\it biholomorphically equivalent}) if there is an orientation-preserving homeomorphism (respectively, a biholomorphism) between the underlying Riemann surfaces that sends the cone points of one to the cone points of the other, preserving the branch orders. 
We use the symbol ${\mathcal O}_{1} \cong {\mathcal O}_{2}$ to indicate that two Riemann orbifolds ${\mathcal O}_{1}$ and ${\mathcal O}_{2}$ are biholomorphically equivalent. If $k \geqslant 2$ is an integer, the symbol $(0; k^n)$ abbreviates the signature $(0; k, \stackrel{n}{\ldots},k).$

\subsection{Geometric automorphisms}
Let $\mathbb{H}$ denote the upper half-plane, and 
let $\Gamma_{1}$ and $\Gamma_{2}$ be Fuchsian groups of the same signature $(\gamma;k_{1},\ldots,k_{r})$. 
By Nielsen's theorem, if $\eta:\Gamma_{1} \to \Gamma_{2}$ is a group isomorphism, then there is a homeomorphism $\varphi:{\mathbb H} \to {\mathbb H}$  such that $\eta(a)=\varphi \circ a \circ \varphi^{-1}$, for $a \in \Gamma_{1}$. If the homeomorphism $\varphi$ is orientation-preserving, then we will say that the isomorphism $\eta$ is a {\it geometric isomorphism}.
In particular, if $\Gamma$ is a Fuchsian group as above, then its group $B_{\Gamma}$ of geometric automorphisms is an index two subgroup of its group of automorphisms ${\rm Aut}(\Gamma)$.

\subsection{Group actions on Riemann surfaces}
Let $G$ be a finite group and $(S,\hat{G})$ be a $G$-action of genus $g \geqslant 2$.
This $G$-action corresponds to an injective homomorphism $\theta:G \to {\rm Aut}(S)$, where $\theta(G)=\hat{G}$ (any other choice is of the form $\theta \circ \psi$, where $\psi$ is an automorphism of $G$).
Associated to the above $G$-action is the quotient Riemann orbifold $S/\hat{G}$, whose underlying Riemann surface structure $R$ has genus $\gamma \geqslant 0$. A point $q_{j} \in R$ is a cone point of branch order $k_{j} \geqslant 2$ if it is the $\hat{G}$-equivalence class of a point in $S$ with $\hat{G}$-stabilizer of order $k_{j} \geqslant 2$. So, its signature is $(\gamma; k_{1},\ldots, k_{r})$, which is the signature of the $G$-action.
If $\pi:S \to S/\hat{G}$ is a regular covering map with deck group $\hat{G}$, then the cone points correspond to the branch values of $\pi$ and $k_{j}$ is the local degree of $\pi$ at a point $x \in \pi^{-1}(q_{j})$. 
By the Riemann-Hurwitz formula one has that
$2(g-1)=|\hat{G}| \left(2\gamma + r-2-\sum_{j=1}^{r} k_{j}^{-1} \right).$
As we are assuming $g \geqslant 2$, the above asserts that the orbifold $S/\hat{G}$ is hyperbolic.
By the classical uniformization theorem, there exists a co-compact Fuchsian group $\Gamma \leqslant {\rm PSL}_{2}({\mathbb R}) \cong \mbox{Aut}({\mathbb H})$ such that  $S/\hat{G}$ and ${\mathbb H}/\Gamma$ are biholomorphic as Riemann orbifolds ($\Gamma$ is unique up to conjugation by elements of ${\rm PSL}_{2}({\mathbb R})$). 
This group has a presentation 
\begin{equation}\label{grupoGamma}
\Gamma=\langle a_{1},b_{1},\ldots,a_{\gamma},b_{\gamma},x_{1},\ldots,x_{r}: x_{1}^{k_{1}}=\cdots=x_{r}^{k_{r}}=\prod_{j=1}^{\gamma}[a_{j},b_{j}] \prod_{i=1}^{r}x_{i}=1\rangle,
\end{equation}
so, $\Gamma$ has signature $(\gamma;k_{1},\ldots,k_{r})$.
Moreover, there is a  group epimorphism  $\vartheta:\Gamma \to G$, whose kernel $F$ is torsion-free, in such a way that there is a biholomorphism $f:S \to {\mathbb H}/F$ such that $f \hat{G} f^{-1} = \Gamma/ F$. The epimorphism $\vartheta$ is called  {\it a monodromy} of the action (which is unique up to post-composing with automorphisms of $G$).  In this way, we may identify the collection of $G$-actions, of the above signature, with the collection of torsion-free finite index normal subgroups $F$ of $\Gamma$ such that $\Gamma/F \cong G$.

\subsection {Topological equivalence of actions}
We recall that two actions $(S_{1},G_{1})$ and $(S_{2},G_{2})$ of the finite group $G$ are {\it topologically equivalent} (respectively, {\it biholomorphically equivalent}) if there exists an orientation-preserving homeomorphism (respectively, biholomorphism)
$\varphi : S_{1} \to  S_{2}$ such that $\varphi^{-1} G_{2}\varphi=G_{1}$. In terms of monodromies, the topological equivalence can be reformulated as follows. 
 Let $\Gamma_{j}$ be a Fuchsian group such that ${\mathbb H}^{2}/\Gamma_{j} \cong S_{j}/G_{j}$, and let 
$\vartheta_{j}:\Gamma_{j} \to G$ be a corresponding monodromy of the action $(S_{j},G_{j})$. In this setting, the topologically equivalence of these two pairs is equivalent to  the existence of an automorphism $\rho$ of $G$ and a geometric isomorphism $\psi: \Gamma_{1} \to \Gamma_{2}$ such that $\rho \circ \vartheta_{1}=\vartheta_{2} \circ \psi.$ In terms of the associated torsion-free finite index subgroups $F_{j}=\ker(\vartheta_{j})$, this corresponds to have $\psi(F_{1})=F_{2}$. Note that,  up to topological equivalence, we may assume $\Gamma_1=\Gamma_2=\Gamma$. In this case, all the above assert the following description (we refer to the survey \cite{Bs} for more details).

\begin{theo}[see, for instance, \cite{BW07}, \cite{Bs}]
Let $\Gamma$ be a Fuchsian group of signature $(\gamma;k_{1},\ldots,k_{r})$ and let $G$ be a finite group. 
Then the number of topologically different $G$-actions of signature $(\gamma;k_{1},\ldots,k_{r})$ is equal to the number of surjective homomorphisms $\vartheta:\Gamma \to G$ with torsion-free kernel,  up to post-composition by elements of ${\rm Aut}(G)$ and pre-composition by elements of $B_{\Gamma}$. Equivalently, if ${\mathcal F}$ is the collection of those torsion-free finite index normal subgroups $F$ of $\Gamma$ such that $\Gamma/F \cong G$, then the number of topologically different actions of $G$ with signature $(\gamma;k_{1},\ldots,k_{r})$ is equal to the cardinality of ${\mathcal F}/{\rm B}_{\Gamma}$.
\end{theo}

 In the case that $\gamma=0$, a set of generators for the group of geometric automorphisms $B_{\Gamma}$ of $\Gamma$ is known.



\section{Abelian Actions}\label{Sec:Abeliano}
In this section, we restrict to those actions of a given finite abelian group $A$. 

\subsection{}
Let $(S,N)$ be a $A$-action of (hyperbolic) signature $(\gamma;k_{1},\ldots,k_{r})$. Let $\Gamma$ be a Fuchsian group such that the Riemann orbifolds $S/N$ and ${\mathbb H}/\Gamma$ are biholomorphic; so $\Gamma$ has a presentation as in \eqref{grupoGamma}. There is a torsion-free finite index normal subgroup $F$ of $\Gamma$ such that there is biholomorphism $\varphi:S \to {\mathbb H}/F$ satisfying that $\varphi N \varphi^{-1}=\Gamma/F$. Since $A \cong \Gamma/F$ is abelian, the commutator subgroup $\Gamma'$ of $\Gamma$ must be contained inside $F$. This asserts that $\Gamma'$ is torsion-free. In \cite{Maclachlan}, Maclachlan proved that $\Gamma'$ is torsion free if and only if 
\begin{equation}\label{eq2}
{\rm lcm}(k_{1},\ldots,k_{j-1},k_{j+1},\ldots,k_{r})=m_{0}, \quad j=1,\ldots,r,
\end{equation}
where ${\rm lcm}$ denotes the least common multiple. As a consequence of the above, one has the following result.

\begin{theo}\label{teoremaiso1}
Let $A$ be a finite abelian group.
If $(S,N)$ is an $A$-action of (hyperbolic) signature $(\gamma;k_{1},\ldots,k_{r})$, then
(1)  the collection $\{k_{1},\ldots,k_{r}\}$ satisfies the Maclachlan condition \eqref{eq2}, and 
(2) there is a Fuchsian group $\Gamma$ with presentation \eqref{grupoGamma} and a torsion-free finite index normal subgroup $F$, containing $\Gamma'$, such that $(S,N)$ and $({\mathbb H}/F,\Gamma/F)$ are biholomorphically equivalent.
\end{theo}

\subsection{}
Let $(S_{1},A_{1})$ and $(S_{2},A_{2})$ be topologically equivalent $A$-actions, say with signature $(\gamma;k_{1},\ldots,k_{r})$.
Let $\Gamma_{j}$ be a Fuchsian group with presentation \eqref{grupoGamma} and $F_{j}$ be a torsion-free finite index normal subgroup of $\Gamma_{j}$, containing $\Gamma_{j}'$, such that $(S_{j},A_{j})$ and $({\mathbb H}/F_{j},\Gamma_{j}/F_{j})$ are biholomorphically equivalent.
So, there is an orientation-preserving homeomorphism $\phi:{\mathbb H}/F_{1} \to {\mathbb H}/F_{2}$ such that $\phi (\Gamma_{1}/F_{1}) \phi^{-1}=\Gamma_{2}/F_{2}$. This homeomorphism descends to an orientation-preserving homeomorphism $\hat{\phi}:{\mathbb H}/\Gamma_{1} \to {\mathbb H}/\Gamma_{2}$.
Note that $\hat{\phi}$ sends the set of cone points of ${\mathbb H}/\Gamma_{1}$ onto the set of cone points of ${\mathbb H}/\Gamma_{2}$ preserving the branch orders. 
Now, as $\Gamma_{j}'$ is a characteristic subgroup of $\Gamma_{j}$, the homeomorphism $\hat{\phi}$ lifts to an orientation-preserving homeomorphism $\tilde{\phi}:{\mathbb H}/\Gamma_{1}' \to {\mathbb H}/\Gamma_{2}'$ such that $\tilde{\phi} (F_{1}/\Gamma_{1}') \tilde{\phi}^{-1}=F_{2}/\Gamma_{2}'$ and $\tilde{\phi} (\Gamma_{1}/\Gamma_{1}') \tilde{\phi}^{-1} = \Gamma_{2}/\Gamma_{2}'$.
All the above permits us to observe the following.

\begin{theo}\label{abeliano1}
Let $A$ be a finite group and let $\Gamma$ be a fixed Fuchsian group of signature $(\gamma;k_{1},\ldots,k_{r})$, where the integers $k_{1},\ldots,k_{r}$ satisfy the Maclachlan condition \eqref{eq2}.
If $(S,N)$ an $A$-action of signature $(\gamma;k_{1},\ldots,k_{r})$, then there exists a torsion-free finite index normal subgroup $F$ of $\Gamma$, containing $\Gamma'$, such that 
$(S,N)$ and $({\mathbb H}/F,\Gamma/F)$ are topologically equivalent.
In particular, there is a finite index normal subgroup $K=F/\Gamma'$ of $H_{\Gamma}=\Gamma/\Gamma'$, acting freely on the Riemann surface $C_{\Gamma}={\mathbb H}/\Gamma'$, such that
$(S,N)$ and $(C_{\Gamma}/K,H/K)$ are topologically equivalent.
\end{theo}

\begin{rema}
If $\gamma=0$, then $H_{\Gamma}$ is a finite abelian group and the Riemann surface $C_{\Gamma}$ is compact.
\end{rema}

\subsection{}
Let us consider a fixed Fuchsian group $\Gamma$ of signature $(\gamma;k_{1},\ldots,k_{r})$, where the integers $k_{1},\ldots,k_{r}$ satisfy the Maclachlan condition \eqref{eq2}. Let us consider the Riemann surface $C_{\Gamma}={\mathbb H}/\Gamma'$ (called the homology cover of the orbifold ${\mathbb H}/\Gamma$), and the abelian group $H_{\Gamma}=\Gamma/\Gamma'$ (which is isomorphic to the orbifold homology group of the orbifold). 
If $A$ is a finite abelian group such that there is an $A$-action of the above signature, then we may consider the finite set
$${\mathcal F}_{A}(\gamma;k_{1},\ldots,k_{r})=\left\{K \leqslant H_{\Gamma}: \mbox{$K$ acts freely on $C_{\Gamma}$ and $H_{\Gamma}/K \cong A$}     \right\}.$$

As a consequence of Theorem \ref{abeliano1}, we observe the following.

\begin{coro}\label{corolario1}
Let $A$ be a finite abelian group.
Up to topological equivalence, every $A$-action of signature $(\gamma;k_{1},\ldots,k_{r})$ is given as $(C_{\Gamma}/K,H_{\Gamma}/K)$, where $K \in {\mathcal F}_{A}(\gamma;k_{1},\ldots,k_{r})$. 
\end{coro}

In particular, the above asserts that the cardinality of ${\mathcal F}_{A}(\gamma;k_{1},\ldots,k_{r})$ provides an upper bound for the number of actions of the group $A$ with the signature $(\gamma;k_{1},\ldots,k_{r})$.
Now, we determine when two members of ${\mathcal F}_{A}(\gamma;k_{1},\ldots,k_{r})$ determine topologically equivalent actions.

\begin{theo}\label{abeliano2}
Let $A$ a finite abelian group and $K_{1}, K_{2} \in {\mathcal F}_{A}(\gamma;k_{1},\ldots,k_{r})$. Then the actions
$(C_{\Gamma}/K_{1},H_{\Gamma}/K_{1})$ and $(C_{\Gamma}/K_{2},H_{\Gamma}/K_{2})$ are topologically equivalent if and only if there is an orientation-preserving homeomorphism $f:C_{\Gamma} \to C_{\Gamma}$ such that $f K_{1} f^{-1}=K_{2}$ and $f H_{\Gamma} f^{-1}=H_{\Gamma}$.
\end{theo}
\begin{proof}
(1) Assume the existence of $f$ as in the hypothesis. Then $f$ descends to an orientation-preserving homeomorphism $\tilde{f}:C_{\Gamma}/K_{1} \to C_{\Gamma}/K_{2}$ such that $\tilde{f} H_{\Gamma}/K_{1} \tilde{f}^{-1}=H_{\Gamma}/K_{2}$.
(2) In the other direction, assume there is an orientation-preserving homeomorphism 
$\tilde{f}:C_{\Gamma}/K_{1} \to C_{\Gamma}/K_{2}$ such that $\tilde{f} H_{\Gamma}/K_{1} \tilde{f}^{-1}=H_{\Gamma}/K_{2}$. Then, $\tilde{f}$ descends to an orientation-preserving self-homeomorphism $\hat{f}$ of the Riemann orbifold $C_{\Gamma}/H_{\Gamma}$. As $\Gamma'$ is characteristic subgroup of $\Gamma$, this homeomorphism lifts to an orientation-preserving homeomorphism $f:C_{\Gamma} \to C_{\Gamma}$ satisfying that  
$f H_{\Gamma} f^{-1}=H_{\Gamma}$ (since its descends to $\hat{f}$) and $f K_{1} f^{-1}=K_{2}$ (as its descends to $\tilde{f}$).
\end{proof}

\subsubsection{}
Let us denote by ${\rm Aut}_{g}(H_{\Gamma})$ the subgroup of ${\rm Aut}(H_{\Gamma})$ consisting of its geometric automorphisms, i.e., those $\Phi \in {\rm Aut}(H_{\Gamma})$ for which there exists an orientation-preserving homeomorphism $f:C_{\Gamma} \to C_{\Gamma}$ such that $\Phi(h)=f \circ h \circ f^{-1}$, for every $h \in H_{\Gamma}$.
Note that ${\rm Aut}_{g}(H_{\Gamma})$ keeps invariant the finite set ${\mathcal F}_{A}(\gamma;k_{1},\ldots,k_{r})$.

\begin{coro}\label{abeliano3}
Let $A$ be a finite abelian group. Then the number of topologically different $A$-actions of signature $(\gamma;k_{1},\ldots,k_{r})$ is the cvardinality of
${\mathcal F}_{A}(\gamma;k_{1},\ldots,k_{r})/{\rm Aut}_{g}(H_{\Gamma})$.
\end{coro}

For the sake of clarity, we include below an example.
\begin{example}
Let $A$ be a nontrivial abelian group and the signature $(0;2,2,3,3)$. Note that $|A|$ is divisible by $6$. In this case, 
$\Gamma=\langle x_{1},x_{2},x_{3},x_{4}: x_{1}^{2}=x_{2}^{2}=x_{3}^{3}=x_{4}^{3}=x_{1}x_{2}x_{3}x_{4}=1\rangle,$ and therefore 
$$H_{\Gamma}=\langle a_{1},a_{2},a_{3},a_{4}: a_{1}^{2}=a_{2}^{2}=a_{3}^{3}=a_{4}^{3}=a_{1}a_{2}a_{3}a_{4}=1, [a_{i},a_{j}]=1\rangle=\langle a_{1},a_{3}:a_{1}^{2}=a_{3}^{3}=[a_{1},a_{3}=1\rangle \cong {\mathbb Z}_{6}.$$Thus, $A={\mathbb Z}_{6}.$ 
Since every element of $H_{\Gamma}$ acts with fixed points, one has that ${\mathcal F}_{{\mathbb Z}_{6}}(0; 2,2,3,3)$ consists of only the trivial group, showing that there is only one topological action of $A$ with the above given signature.
\end{example}

In the following sections, we will consider the case when $A={\mathbb Z}_{k}^{m}$ and the signature is $(0;k^{n+1})$.

%
%
%

\section{Generalized Fermat curves}\label{CFG}
Let $n,k \geqslant 2$ be integers. A compact Riemann surface $X$ is called a
{\it generalized Fermat curve} of type $(k,n)$ if there exists a group $\mathbb{Z}_k^n \cong H_0 \leqslant
\mbox{Aut}(X)$ such that $X/H_0$ has signature $(0;k^{n+1})$. The group $H_0$ is a {\it generalized
Fermat group} of type $(k,n)$ and the pair $(X,H_0)$ is a {\it generalized
Fermat pair} of type $(k,n)$. By the Riemann-Hurwitz formula, the genus of $X$ is
$$g=1+ \tfrac{k^{n-1}}{2}((n-1)(k-1)-2).$$

In particular, the non-hyperbolic generalized Fermat pairs are those satisfying $(n-1)(k-1) \leqslant2$, that is, of type
$(2,2)$, $(2,3)$ and $(3,2)$. 
Henceforth, we restrict our attention to the hyperbolic case, that is, when $(n-1)(k-1)>2$.

\subsection{\bf Fuchsian description}\label{fdes} 
Let $(X, H_0)$ be a hyperbolic generalized Fermat pair of type $(k,n)$. The Riemann orbifold $X/H_0$ is uniformized by a Fuchsian group $\Gamma
= \langle x_1, \ldots, x_{n+1} : x_1^k = \ldots = x_{n+1}^k = x_1
\cdots x_{n+1} =1 \rangle.$ 
Let $\Gamma'$ stands for the commutator subgroup of $\Gamma$. Note that $(\mathbb{H}/  \Gamma',\Gamma / \Gamma')$ is a generalized Fermat pair of type $(k,n)$.

\begin{theo*}[\cite{GHL}]\mbox{}
\begin{enumerate}
\item  The generalized Fermat pairs 
$(X,H_0)$ and $(\mathbb{H}/  \Gamma',\Gamma / \Gamma')$ are biholomorphically equivalent.
\item The regular covering map $\pi:X \to X/H_{0}$ induced by the action of $H_{0}$ is characteristic.
\item Generalized Fermat pairs of the same type are topologically equivalent.
\item Let $S$ be a  Riemann surface of genus at least two  endowed with an abelian group of automorphisms $N$ such that $S/N$ has signature 
 $(0;k^{n+1})$. There exist a generalized Fermat pair $(X,H_{0})$ of type $(k,n)$ and a subgroup 
$K_S \lhd H_{0}$, acting freely on $X$, such that $(S, N)$ and $(X/K_S, H_{0}/K_S)$ are biholomorphically equivalent.
\end{enumerate}
\end{theo*}


\subsubsection{\bf Algebraic description and uniqueness of the generalized Fermat groups}
 In what follows, for each integer $k \geqslant 2$ we set $\omega_{k}=\exp(2 \pi i/k)$. If $n=2$, we set $\Omega_{2}=\{\emptyset\}$ and, for $n \geqslant 3$, we set 
 $$\Omega_n:=\{(\lambda_{1}, \ldots, \lambda_{n-2}) \in \mathbb{C}^{n-2}: \lambda_{j} \neq 0, 1 \mbox{ and } \lambda_{i} \neq \lambda_{i} \mbox{ for } i \neq j\}.$$ 
 
Let $(X,H_{0})$ be a generalized Fermat pair of type $(k,n)$. Then there is a regular covering map $\pi:X \to  \bar{\mathbb{C}}$, with deck group $H_{0}$, such that its set of branch values are 
 \begin{equation}\label{ram}
\left\{ \begin{array}{ll}
\infty, 0, 1, & \mbox{if $n=2$.} \\
\infty, 0, 1, q_{4},\ldots, q_{n+1}, & \mbox{if $n\geqslant 3$.}
\end{array}
\right.
\end{equation}
Note that, if $n \geqslant 3$, then $\Lambda:=(q_{4},\ldots,q_{n+1}) \in \Omega_{n}$. If $n=2$, we set $\Lambda=\emptyset$.



Consider the complex projective algebraic curve in
$\mathbb{P}^n$ defined by
$C_{k}(\emptyset)=\{x_{1}^{k} + x_{2}^{k} + x_{3}^{k} = 0 \}$ for $n=2$, and  
\begin{equation*}\label{cc1}C_k(\Lambda) :  \left \{ \begin{array}{ccccccc}
x_1^k & + & x_2^k & + & x_3^k & =  & 0\\
q_4 x_1^k & + & x_2^k & + & x_4^k & =  & 0\\
&\vdots& &\vdots & & \vdots\\
q_{n+1} x_1^k& + & x_2^k & + & x_{n+1}^k & =  & 0\\\end{array} \right\}\end{equation*}
for $n \geqslant 3$ (the fact that  $q_j \in \mathbb{C}-\{0,1\}$ are pairwise distinct implies that the above algebraic curve is non-singular, and hence represents a compact Riemann
surface). 

Note that the linear transformations $a_{j} \in {\rm PGL}_{n+1}({\mathbb C})$ defined as\begin{equation}\label{lin}a_j([x_1: \ldots :x_{n+1}])  = 
[x_1:\ldots:x_{j-1}:\omega_{k} x_j:x_{j+1}: \ldots: x_{n+1}] \mbox{ for }j \in \{ 1,\ldots,n \},
\end{equation}generate a group of automorphisms $H \cong {\mathbb Z}_{k}^{n}$ of $C_k(\Lambda).$ The
$k^n$-fold covering map \begin{equation}\label{pi} \pi :C_k(\Lambda) \to \bar{\mathbb{C}} \, \mbox{ given by }\, \pi ([x_1:\ldots:x_{n+1}])= - (x_2/x_1)^k,\end{equation} satisfies $
\pi \circ a_j = \pi$ for every $1\leqslant j \leqslant n$ and ramifies precisely over the points in \eqref{ram}. Thus,
$(C_k(\Lambda), H)$ is a generalized Fermat pair of type
$(k,n).$  

\begin{theo*}[\cite{GHL}]
The pairs $(X,H_0)$ and $(C_k(\Lambda) , H)$ are biholomorphically equivalent.
\end{theo*}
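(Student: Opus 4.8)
The plan is to realize both pairs as quotients of the upper half-plane by one and the same pair of Fuchsian groups, thereby upgrading the topological equivalence (item (1) of the theorem of \cite{GHL} quoted above) to a biholomorphic one. The first thing to keep in mind is that the $n+1$ cone points of $X/H_0$ have, by a preliminary Möbius transformation, been normalized to be $\infty,0,1,q_4,\ldots,q_{n+1}$ as in \eqref{ram}, and that $C_k(q_4,\ldots,q_{n+1})$ is built from exactly these parameters; this matching is what makes the statement true, since a different choice of the $q_j$ would yield only topologically (not biholomorphically) equivalent pairs. Let $\Gamma=\langle x_1,\ldots,x_{n+1}\mid x_1^k=\cdots=x_{n+1}^k=x_1\cdots x_{n+1}=1\rangle$ be a Fuchsian group with $\mathbb{H}^2/\Gamma$ biholomorphic to $\bar{\mathbb{C}}$ with these $n+1$ points as cone points, all of order $k$. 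By \S\ref{fdes}, $(X,H_0)$ is then biholomorphically equivalent to $(\mathbb{H}^2/\Gamma',\,\Gamma/\Gamma')$.

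Next I would treat the algebraic model in the same terms. We have already verified that $(C_k(q_4,\ldots,q_{n+1}),H)$ is a generalized Fermat pair of type $(k,n)$ and that the map $\pi$ of \eqref{pi} ramifies precisely over $\infty,0,1,q_4,\ldots,q_{n+1}$, so $C_k(q_4,\ldots,q_{n+1})/H$ is biholomorphic, as an orbifold, to $\mathbb{H}^2/\Gamma$. Lifting this orbifold biholomorphism through the uniformization, one obtains a torsion-free normal subgroup $F\lhd\Gamma$ — the kernel of the monodromy of $\pi$ — with $C_k(q_4,\ldots,q_{n+1})\cong\mathbb{H}^2/F$ and $H\cong\Gamma/F$. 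The concluding step is to identify $F$ with $\Gamma'$: since $H\cong\mathbb{Z}_k^n$ is abelian we have $\Gamma'\leqslant F$, while for signature $(0;k^{n+1})$ the abelianization $\Gamma/\Gamma'$ is isomorphic to $\mathbb{Z}_k^{n+1}/\langle(1,\ldots,1)\rangle\cong\mathbb{Z}_k^n$, which is finite of the same order as $H$; hence the induced surjection $\Gamma/\Gamma'\to\Gamma/F\cong H$ is an isomorphism and $F=\Gamma'$. (This is exactly the characteristic-covering statement, item (2) of the theorem of \cite{GHL}.) Combining the two descriptions gives $(C_k(q_4,\ldots,q_{n+1}),H)\cong(\mathbb{H}^2/\Gamma',\Gamma/\Gamma')\cong(X,H_0)$ biholomorphically, as claimed.

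The genuinely delicate points are two. First, one must justify that a single Fuchsian group $\Gamma$ can serve for both pairs; this rests on the uniqueness up to conjugation of the uniformization of the common quotient orbifold $\bar{\mathbb{C}}$ with marked points $\infty,0,1,q_4,\ldots,q_{n+1}$ of order $k$. Second, one must carry out the normalization bookkeeping so that the parameters $q_j$ entering $C_k(q_4,\ldots,q_{n+1})$ really are the normalized cone points of $X/H_0$. I expect this bookkeeping, rather than any substantial topological input, to be the main thing to get right; note also that for biholomorphic equivalence of \emph{pairs} one does not need the resulting biholomorphism to send the distinguished generators $a_j$ of $H$ to those of $H_0$, only $\varphi^{-1}H\varphi=H_0$, and that is what the argument delivers.
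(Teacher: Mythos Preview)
The paper does not supply its own proof of this statement; it is simply attributed to \cite{GHL}. Your argument is correct and is precisely the Fuchsian-uniformization approach that the paper sketches in \S\ref{fdes} and employs in the outlined proof of the adjacent theorem (the one characterizing abelian quotients of signature $(0;k^{n+1})$ via subgroups $K_S\lhd H_0$): uniformize the common orbifold $\bar{\mathbb C}$ with cone points $\infty,0,1,q_4,\ldots,q_{n+1}$ by a single $\Gamma$, and use abelianity plus the index count to force the torsion-free kernel to equal $\Gamma'$. The two caveats you flag (that the same $\Gamma$ serves for both pairs because the quotient orbifolds are biholomorphic, and that the normalization \eqref{ram} is what makes the parameters match) are exactly the points one must check, and you handle them correctly.
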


\begin{rema}
The above fact asserts that, for $n \geqslant 3$, the domain $\Omega_n$
parametrizes all generalized Fermat pairs of type $(k,n)$. 
\end{rema}

\begin{theo*}[\cite{GHL,HMoscow,HKLP}]
Let $n \geqslant 2$ and $\Lambda \in \Omega_{n}$.  Then
\begin{enumerate}
\item[(1)] The nontrivial elements of $H$ that have fixed points in $C_{k}(\Lambda)$ are $a_1, \ldots, a_n, a_{n+1}:=(a_1 \cdots \, a_n)^{-1}$ and their powers. Moreover, each fixed point of a non-trivial power of $a_{j}$ is also a fixed point of $a_{j}$.

\item[(2)] The group $H$ is the unique generalized Fermat group of $C_{k}(\Lambda)$, that is, if $H' \leqslant {\rm Aut}(C_{k}(\Lambda))$ is a generalized Fermat group of some type $(k',n')$, then $(k',n')=(k,n)$ and $H'=H$. 
\end{enumerate}
 \end{theo*}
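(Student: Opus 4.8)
The statement has two parts of quite different flavour: part (1) is a hands-on computation on the projective model, while part (2) is a rigidity statement that carries the real weight. For part (1), I would argue directly on $C_{k}(\Lambda)\subset{\mathbb P}^{n}$ with the automorphisms $a_{j}$ of \eqref{lin}, noting first that $a_{n+1}=(a_{1}\cdots a_{n})^{-1}$ acts by $[x_{1}:\cdots:x_{n+1}]\mapsto[x_{1}:\cdots:x_{n}:\omega_{k}x_{n+1}]$. The key preliminary remark is that no point of $C_{k}(\Lambda)$ has two vanishing coordinates: substituting $x_{i}=x_{j}=0$ into the defining equations and using that the $q$'s are nonzero and pairwise distinct forces all coordinates to vanish. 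Hence the loci $\{x_{j}=0\}\cap C_{k}(\Lambda)$ ($j=1,\dots,n+1$) are pairwise disjoint, each is a single fibre of the map $\pi$ of \eqref{pi} with exactly $k^{n-1}$ points, and ${\rm Fix}(a_{j})=\{x_{j}=0\}\cap C_{k}(\Lambda)$ since $a_{j}$ multiplies $x_{j}$ by $\omega_{k}\neq 1$. Now let $h=a_{1}^{e_{1}}\cdots a_{n}^{e_{n}}\neq 1$ fix a point $p=[x_{1}:\cdots:x_{n+1}]$; then $\omega_{k}^{e_{i}}x_{i}=\lambda x_{i}$ for $i\leqslant n$ and $x_{n+1}=\lambda x_{n+1}$ for a common $\lambda\in{\mathbb C}^{\ast}$. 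If $\lambda=1$, then $x_{i}=0$ for every $i\leqslant n$ with $e_{i}\not\equiv 0\ (\mathrm{mod}\ k)$; there is at least one such index since $h\neq 1$ and at most one by disjointness, so $h$ is a power of that $a_{i}$. If $\lambda\neq 1$, then $x_{n+1}=0$, so $x_{i}\neq 0$ for all $i\leqslant n$, forcing $\omega_{k}^{e_{i}}=\lambda$ and hence $e_{1}\equiv\cdots\equiv e_{n}\ (\mathrm{mod}\ k)$, i.e. $h$ is a nontrivial power of $a_{1}\cdots a_{n}=a_{n+1}^{-1}$. Conversely each nontrivial power of $a_{j}$ has fixed points and, multiplying $x_{j}$ by a root of unity $\neq 1$, shares the fixed locus $\{x_{j}=0\}$ with $a_{j}$. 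This proves (1); the case $n=2$ is identical for the classical curve $F_{k}$.

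For part (2) the plan is to pass to uniformizations and lean on the characteristic property of \cite{GHL} recorded above. Write $X=C_{k}(\Lambda)={\mathbb H}^{2}/L$, where $L$ is the torsion-free kernel of the monodromy of $(X,H)$; by \cite{GHL}, $L=\Gamma'$ is the commutator subgroup of the Fuchsian group $\Gamma$ of signature $(0;k^{n+1})$ uniformizing $X/H$, so $L$ is characteristic in $\Gamma$, and ${\rm Aut}(X)=N/L$ with $N$ the normalizer of $L$ in ${\rm PSL}_{2}({\mathbb R})$. If $H'\leqslant{\rm Aut}(X)$ is a generalized Fermat group of type $(k',n')$, the same discussion applied to $(X,H')$ produces a Fuchsian group $\Delta\leqslant N$ of signature $(0;(k')^{n'+1})$ with $X/H'={\mathbb H}^{2}/\Delta$ and $L=\Delta'$. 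Thus $\Gamma$ and $\Delta$ are cocompact Fuchsian subgroups of $N$ with the same commutator subgroup $L$, and $H=\Gamma/L$, $H'=\Delta/L$. Part (1) then controls the elliptic data on both sides: the points of $X$ with nontrivial $H$-stabilizer form the $n+1$ pairwise disjoint fibres ${\rm Fix}(a_{1}),\dots,{\rm Fix}(a_{n+1})$ of size $k^{n-1}$, the $H$-stabilizer of a point of ${\rm Fix}(a_{j})$ is the order-$k$ group $\langle a_{j}\rangle$, and $H=\langle a_{1},\dots,a_{n+1}\rangle$ is generated by these stabilizers; symmetrically for $H'$ with $k',n'$.

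It remains to upgrade this to $\Gamma=\Delta$, equivalently $(k',n')=(k,n)$ and $H'=H$, and this is the delicate part where I expect the genuine work to lie; it is carried out in \cite{GHL,HMoscow,HKLP}. The route I would follow is: first use the genus formula $g=1+\tfrac{k^{n-1}}{2}((n-1)(k-1)-2)$ evaluated at $(k,n)$ and at $(k',n')$, together with the divisibility constraints $k^{n},\,(k')^{n'}\mid|{\rm Aut}(X)|$ and the description in part (1) of the orders and multiplicities of the cyclic point-stabilizers of $X$, to force $(k',n')=(k,n)$; then, using the structure of ${\rm Aut}(X)$ as an extension $1\to H\to{\rm Aut}(X)\to\bar G\to 1$ with $\bar G\leqslant{\bf S}_{n+1}$ acting by permutations of the $n+1$ cone points of $X/H$ (as in \cite{HMoscow,HKLP}), carry out a case analysis on the possible images in $\bar G$ of a second generalized Fermat group $H'$, matched against the fixed-point pattern of part (1), to conclude that each generator of $H'$ already lies in $H$; finally $H'\leqslant H$ together with $|H'|=k^{n}=|H|$ gives $H'=H$. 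The main obstacle is precisely this rigidity step — ruling out ``exotic'' generalized Fermat subgroups inside ${\rm Aut}(X)$ — rather than the uniformization bookkeeping that precedes it.
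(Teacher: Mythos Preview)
The paper does not prove this theorem; it is stated as a fact from the literature with citations to \cite{GHL,HMoscow,HKLP}, so there is no in-paper proof to compare against. I can only assess your proposal on its own merits.

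Your argument for part (1) is correct and complete: the observation that no point of $C_{k}(\Lambda)$ has two vanishing coordinates, followed by the eigenvalue analysis for a general $h=a_{1}^{e_{1}}\cdots a_{n}^{e_{n}}$, cleanly pins down the elements with fixed points and their fixed loci.

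For part (2) there is a genuine gap, and your own text flags it. The uniformization set-up is fine and gives $L=\Gamma'=\Delta'$ with $\Gamma,\Delta\leqslant N$, but the ``rigidity step'' is not carried out. Two specific problems with the sketch you offer. First, the genus identity
\[
1+\tfrac{k^{n-1}}{2}\bigl((n-1)(k-1)-2\bigr)=1+\tfrac{(k')^{n'-1}}{2}\bigl((n'-1)(k'-1)-2\bigr)
\]
together with $k^{n},(k')^{n'}\mid|{\rm Aut}(X)|$ does not by itself force $(k',n')=(k,n)$; extra input is needed, and in the cited references this comes from a careful analysis of which elements of ${\rm Aut}(X)$ can have fixed points and of what orders. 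Second, and more seriously, your plan invokes ``the structure of ${\rm Aut}(X)$ as an extension $1\to H\to{\rm Aut}(X)\to\bar G\to 1$'' to finish the argument, but the existence of that extension is exactly the normality of $H$ in ${\rm Aut}(X)$, which the paper itself derives \emph{from} the uniqueness of $H$ (see the Remark immediately following the theorem). So the sketch is circular as written: you are assuming a consequence of part (2) in order to prove part (2). The actual proofs in \cite{GHL,HKLP} avoid this by working more directly with the fixed-point data and the lattice of subgroups, rather than presupposing the short exact sequence.
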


\begin{rema}If $\Lambda=(q_{4},\ldots,q_{n+1}) \in \Omega_{n}$, and $A$ is the 
$\mbox{M\"{o}b}(\mathbb{C})$-stabilizer of $\mathscr{B}_{\Lambda}=\{\infty, 0, 1, q_{4},\ldots, q_{n+1}\}$, then 
the uniqueness of $H$ asserts that it is a normal subgroup of $\mbox{Aut}(C_{k}(\Lambda))$
 and therefore $ \mbox{Aut}(C_{k}(\Lambda))/H$ is a spherical group. In other words, there is a short exact sequence of groups
$1\longrightarrow H \longrightarrow \mbox{Aut}(C_{k}(\Lambda))\stackrel{\theta}{\longrightarrow} A \longrightarrow 1$.
 \end{rema}

\subsection{Geometric automorphisms of $H$}\label{geoaut}
Let $\Lambda=(q_4, \ldots, q_{n+1}) \in \Omega_{n}$. We proceed to provide a concrete description of ${\rm Aut}_{g}(H)$. We use the following notations.

\begin{enumerate}
\item  $\mbox{Hom}^{+}(C_{k}(\Lambda))$ is the group of orientation-preserving homeomorphisms of $C_{k}(\Lambda).$
\item $\mbox{Hom}^{+}_H(C_{k}(\Lambda))$ is the normalizer of $H$ in $\mbox{Hom}^{+}(C_{k}(\Lambda))$. 
\item $\mbox{Hom}^{+}(\mathcal{O}(\Lambda))$ is the group of orientation-preserving homeomorphisms of $\bar{\mathbb{C}}$ that preserve the set $\mathscr{B}_{\Lambda}.$
\end{enumerate}

Observe that there is a natural group homomorphism
$$\eta: \mbox{Hom}^{+}_H(C_{k}(\Lambda)) \to \mbox{Hom}^{+}(\mathcal{O}(\Lambda)) \mbox{ such that }\eta(f)\circ \pi = \pi \circ f \mbox{ for each }f \in \mbox{Hom}^{+}_H(C_{k}(\Lambda)).$$The fact that the covering map  \eqref{pi} is characteristic implies that $\eta$ is  surjective. Note that $\ker(\eta)=H$.  Besides, each $f \in \mbox{Hom}^{+}_H(C_{k}(\Lambda))$ defines the geometric automorphism $\Phi_f$ of $H$ given by $\Phi_f(a_j)=f \circ a_j \circ f^{-1}.$

 Let us consider the natural group epimorphism $$\rho: \mbox{Hom}^{+}_H(C_{k}(\Lambda)) \to \mbox{Aut}_g(H) \mbox{ given by }\rho(f)=\Phi_f.$$

\begin{rema} $\mbox{Aut}_g(H)$ does not depend on $\Lambda$. In fact, for each $\hat{\Lambda}=(\hat{q}_4, \ldots, \hat{q}_{n+1}) \in \Omega_n$ we can consider  an orientation-preserving homeomorphism  $\hat{g}: \bar{\mathbb{C}} \to \bar{\mathbb{C}}$ such that \begin{equation}\label{cond}\hat{g}(\infty)=\infty, \hat{g}(0)=0, \hat{g}(1)=1 \mbox{ and } \hat{g}(q_j)=\hat{q}_j \mbox{ for } j=4, \ldots, n+1.\end{equation} 
The fact that \eqref{pi} is characteristic guarantees that $\hat{g}$ lifts to an orientation-preserving homeomorphism $g: C_{k}(\Lambda) \to C_{k}(\hat{\Lambda})$ such that  $gHg^{-1}=H$. Now, the conditions \eqref{cond} imply that, if $\hat{f} \in \mbox{Hom}^{+}_H(C_{k}(\hat{\Lambda}))$ then $$\Phi_{g^{-1}\hat{f}g}(a_j)=g^{-1}\hat{f}g \circ a_j \circ (g^{-1}\hat{f}g)^{-1}=\hat{f} \circ a_j \circ \hat{f}^{-1}=\Phi_{\hat{f}}(a_j).$$ The claim follows after noting that each $f \in \mbox{Hom}^{+}_H(C_{k}(\Lambda))$ is of the form $g^{-1}\hat{f}g.$ 
\end{rema}

\begin{prop}\label{tp} Each $\Phi \in \mbox{Aut}_g(H)$
yields a uniquely determined permutation $\sigma_{\Phi} \in \mathbf{S}_{n+1},$  and the correspondence \begin{equation*}\label{perm}\mbox{Aut}_g(H) \to \mathbf{S}_{n+1} \, \, \mbox{ given by } \,\, \Phi \mapsto \sigma_{\Phi}\end{equation*}is a group isomorphism. In particular, $\mbox{Aut}_g(H)=\langle \Phi_1, \Phi_2\rangle$ where 
$$\Phi_1(a_1, a_2, a_3, \ldots, a_n)=(a_2, a_1, a_3, \ldots, a_n)  \mbox{ and } \Phi_2(a_1, a_2, \ldots, a_{n-1}, a_n)=(a_2, a_3, \ldots,  a_n, a_1).$$
\end{prop}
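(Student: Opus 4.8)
The plan is to analyze how a geometric automorphism $\Phi = \Phi_f$ of $H$ acts on the distinguished generators $a_1, \ldots, a_{n+1}$ and show this action is precisely a permutation of the index set $\{1, \ldots, n+1\}$. First I would recall that, by the structural results quoted earlier, the nontrivial elements of $H$ with fixed points on $C_k(\Lambda)$ are exactly the powers of $a_1, \ldots, a_{n+1}$, and the fixed-point set of $a_j$ (equivalently of any nontrivial power of $a_j$) projects under $\pi$ to the single cone point of $\mathscr{B}_\Lambda$ labelled by $j$ (under the ordering $\infty, 0, 1, q_4, \ldots, q_{n+1}$). Given $f \in \mathrm{Hom}^+_H(C_k(\Lambda))$, conjugation by $f$ permutes the cyclic subgroups $\langle a_1 \rangle, \ldots, \langle a_{n+1} \rangle$, because $f$ sends fixed-point sets to fixed-point sets and $\eta(f) \in \mathrm{Hom}^+(\mathcal{O}(\Lambda))$ permutes $\mathscr{B}_\Lambda$ compatibly. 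Hence there is a well-defined permutation $\sigma_\Phi \in \mathbf{S}_{n+1}$ with $\Phi(\langle a_j \rangle) = \langle a_{\sigma_\Phi(j)} \rangle$; I must then pin down the exponent, i.e.\ show $\Phi(a_j) = a_{\sigma_\Phi(j)}$ and not some other generator $a_{\sigma_\Phi(j)}^t$.

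The exponent normalization is the crux, and I expect it to be the main obstacle. The point is that $f$ is orientation-preserving: near a fixed point of $a_j$, the local action of $a_j$ is rotation by $\omega_k$ (a primitive $k$-th root with a fixed orientation, inherited from the standard local coordinate in which $\pi$ looks like $z \mapsto z^k$), and an orientation-preserving homeomorphism conjugating $a_j$ to a generator of $\langle a_{\sigma_\Phi(j)} \rangle$ must match rotation numbers, forcing $\Phi(a_j) = a_{\sigma_\Phi(j)}$ rather than a nontrivial power. Equivalently, the relation $a_1 \cdots a_{n+1} = 1$ together with orientation data rigidifies the choice. I would phrase this carefully: since $\eta(f)$ near the $j$-th cone point is a local orientation-preserving homeomorphism of $\bar{\mathbb{C}}$ sending it to the $\sigma_\Phi(j)$-th, and the monodromy of $\pi$ around a small positively oriented loop is exactly $a_j$, naturality of the monodromy under $f$ gives $f a_j f^{-1}$ equal to the monodromy of $\pi$ around the corresponding loop at the $\sigma_\Phi(j)$-th cone point, which is $a_{\sigma_\Phi(j)}$.

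Once the map $\Phi \mapsto \sigma_\Phi$ is well-defined and lands in $\mathbf{S}_{n+1}$, I would check it is a group homomorphism — immediate from $\Phi_{f_1} \circ \Phi_{f_2} = \Phi_{f_1 f_2}$ and the corresponding composition of local permutations — and then establish bijectivity. Injectivity follows because if $\sigma_\Phi = \mathrm{id}$ then $\Phi$ fixes every $a_j$, hence is the identity automorphism of $H$ (the $a_j$ generate $H$). Surjectivity I would get from the Remark preceding the Proposition: $\eta: \mathrm{Hom}^+_H(C_k(\Lambda)) \to \mathrm{Hom}^+(\mathcal{O}(\Lambda))$ is surjective and $\mathrm{Hom}^+(\mathcal{O}(\Lambda))$ surjects onto $\mathbf{S}_{n+1}$ via its action on $\mathscr{B}_\Lambda$ — concretely, one can realize the transposition $(1\,2)$ and the cycle $(1\,2\,\cdots\,n+1)$ by suitable orientation-preserving homeomorphisms of $\bar{\mathbb{C}}$ permuting $\{\infty, 0, 1, q_4, \ldots, q_{n+1}\}$ (after adjusting the $q_j$ within $\Omega_n$ if necessary, using the previous Remark that $\mathrm{Aut}_g(H)$ is independent of $\Lambda$), lifting them to $C_k(\Lambda)$ and reading off that the induced automorphisms are $\Phi_1$ and $\Phi_2$ as displayed. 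Finally, since $(1\,2)$ and $(1\,2\,\cdots\,n+1)$ generate $\mathbf{S}_{n+1}$, the generators $\Phi_1, \Phi_2$ generate $\mathrm{Aut}_g(H)$, giving the "in particular" clause. The only subtle bookkeeping point throughout is consistent orientation conventions so that the exponent in the last paragraph's claim $\Phi(a_j) = a_{\sigma_\Phi(j)}$ is uniformly $+1$ across all indices, including $j = n+1$ where $a_{n+1} = (a_1 \cdots a_n)^{-1}$.
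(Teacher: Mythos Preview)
Your proposal is correct and follows essentially the same route as the paper: both arguments hinge on the identity $\Phi_f(a_j) = a_{\sigma(j)}$ where $\sigma$ is the permutation that $\eta(f)$ induces on the cone points, and both deduce the isomorphism from this. The paper packages the argument as a commutative square linking $\rho$, $\eta$, and the permutation map $\hat{\rho}:\mathrm{Hom}^+(\mathcal{O}(\Lambda))\to\mathbf{S}_{n+1}$, and simply \emph{asserts} the key identity $\Phi_f(a_j)=a_{\hat{\rho}(\eta(f))(j)}$ without further comment; your monodromy/rotation-number discussion is exactly the justification that fills this in, so in that respect your write-up is more complete than the paper's.
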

\begin{proof} Consider the natural group epimorphism $$\hat{\rho}: \mbox{Hom}^{+}(\mathcal{O}(\Lambda)) \to \mathbf{S}_{n+1} \mbox{ given by } \hat{f} \mapsto \hat{\rho}(\hat{f}) \mbox{ where } \hat{f}(q_j)=q_{\hat{\rho}(\hat{f})(j)}.$$

The kernel of $\hat{\rho}$ is, up to isotopy rel ${\mathcal B}_{\Lambda}$, the group generated by the Dehn-twists along simple closed curves contained in $\bar{\mathbb{C}} - \mathscr{B}_{\Lambda}.$ 
We note that, if $f \in \mbox{Hom}^{+}_H(C_{k}(\Lambda))$ then $$\Phi_f(a_j)=f \circ a_j \circ f^{-1}=a_{\hat{\rho}(\eta(f))(j)} \mbox{ for each } j=1, \ldots, n.$$
Thus, there is a group isomorphism $\hat{\eta}:  \mbox{Aut}_g(H) \to \mathbf{S}_{n+1}$ which makes the following diagram commutative.
{\small
$$\begin{tikzpicture}[node distance=3.5 cm, auto]\node (P) {$\mbox{Hom}^{+}_H(C_{k}(\Lambda))$};
\node (B) [right of=P] {$\mbox{Aut}_g(H)$};
\node (A) [below of=P, node distance=1.5 cm] {$\mbox{Hom}^{+}(\mathcal{O}(\Lambda))$};
\node (C) [below of=B, node distance=1.5 cm] {$\mathbf{S}_{n+1}$};
\draw[->] (P) to node {$\rho$} (B);
\draw[->] (P) to node [swap]   {$\eta$} (A);
\draw[->] (A) to node  {$\hat{\rho}$} (C);
\draw[->] (B) to node [swap] {$\hat{\eta}$} (C);
\end{tikzpicture}$$
}
This proves the first statement; the latter one follows directly from the former. 
\end{proof}

\begin{rema}\mbox{}
\begin{enumerate}
\item[(1)]
Let $L \leqslant \mbox{Hom}^{+}(\mathcal{O}(\Lambda))$ and set ${\Q}:=\eta^{-1}(L)$. As $\rho(\Q)={\hat{\eta}^{-1}(\hat{\rho}(L))}$,
the subgroup $\Q^*:=\rho(\Q)=\{\Phi_{T}: T \in \Q\}$ of $\mbox{Aut}_g(H)$ is completely determined by the permutational action $\hat{\rho}(L)$.

\item[(2)]
If $\mbox{Aut}(\mathcal{O}(\Lambda)) \leqslant \mbox{M\"{o}b}(\mathbb{C})$ denotes 
the subgroup of $\mbox{Hom}^{+}(\mathcal{O}(\Lambda))$
consisting of the automorphisms of the orbifold $C_{k}(\Lambda)/H$, then we have the following commutative diagram
{\small
$$\begin{tikzpicture}[node distance=3.5 cm, auto]
\node (P) {$\mbox{Aut}(C_{k}(\Lambda))$};
\node (B) [right of=P] {$\mbox{Aut}_g(H)$};
\node (A) [below of=P, node distance=1.5 cm] {$\mbox{Aut}(\mathcal{O}(\Lambda))$};
\node (C) [below of=B, node distance=1.5 cm] {$\mathbf{S}_{n+1}$};
\draw[->] (P) to node {\tiny $\rho$} (B);
\draw[->] (P) to node   {\tiny $\eta$} (A);
\draw[->] (A) to node  {\tiny $\hat{\rho}$} (C);
\draw[->] (B) to node [swap] {\tiny $\hat{\eta}$} (C);
\end{tikzpicture}$$
}
where neither $\rho$ nor $\hat{\rho}$ are surjective for $n \geqslant 3,$ and that $\mbox{ker}(\rho)\cap \mbox{Aut}(C_{k}(\Lambda))=H.$
\end{enumerate}
\end{rema}

\section{$\mathbb{Z}_{k}^{m}$-actions of signature  $(0;k^{n+1})$}\label{state}
In this section, $k,n \geqslant 2$ and $1 \leqslant m \leqslant n$ are integers such that $(n-1)(k-1)>2$. We recall the following definition.

\begin{defi}
A pair $(S, N)$ is called a {\it $\mathbb{Z}_{k}^{m}$-action of signature  $(0;k^{n+1})$} if 
$S$ is a compact Riemann surface endowed with a group of automorphisms  $N \cong {\mathbb Z}_{k}^{m}$ such that $S/N$ has signature $(0;k^{n+1})$.
\end{defi}

Observe that the Riemann surfaces $S$, such that $(S,N)$ is a $\mathbb{Z}_{k}^{m}$-action of signature  $(0;k^{n+1})$, 
form a complex $(n-2)$-dimensional family in moduli space $\mathscr{M}_g,$ where  \begin{equation}\label{gen}g=1+\tfrac{1}{2}k^{m-1}[(n-1)(k-1)-2].
\end{equation}

\begin{rema} Every ${\mathbb Z}_{k}^{n}$-action of signature  $(0;k^{n+1})$ 
is biholomorphic to a generalized Fermat pair of type $(k,n)$. In particular, all of them are topologically equivalent. On the other extreme, every ${\mathbb Z}_{k}$-action $(S,N)$  of signature  $(0;k^{n+1})$ corresponds to a cyclic $k$-gonal curve $$S \cong \left\{y^{k}=\prod_{j=1}^{n+1}(x-q_{j})^{l_{j}}\right\} \mbox{ and } N=\langle (x,y) \mapsto (x,\omega_{k} y)\rangle,$$ where the integers $l_1, \ldots, l_{n+1}$ lie in $\{1,\ldots,k-1\}$ and are coprime to $k$, and $l_{1}+\cdots+l_{n+1} \equiv 0 \mbox{ mod }k$.
\end{rema}

By the remark above, hereafter we shall only consider the case $n \geqslant 3$ and $1 \leqslant m \leqslant n-1$.

\subsection{Fiber product description for ${\bf m \geqslant 2}$}\label{fibradoproducto}
Let $(S,N)$ be a $\mathbb{Z}_{k}^{m}$-action of signature  $(0;k^{n+1})$, where $n \geqslant 3$, $k \geqslant 2$, $(n-1)(k-1)>2$ and $2 \leqslant m \leqslant n-1$.
Let $\phi_1, \ldots, \phi_m$ be automorphisms of $S$ that generate $N,$ and let $\pi : S \to \bar{\mathbb{C}}$ be a branched regular covering map with deck group $N.$ Set $$N_1=\langle \phi_2, \ldots, \phi_{m}\rangle, \, N_m=\langle \phi_1, \ldots, \phi_{m-1}\rangle \mbox{ and }N_i=\langle \phi_1, \ldots, \phi_{i-1}, \phi_{i+1}, \ldots, \phi_m \rangle,$$for each $i =2, \ldots, m-1.$ Note that $N_{j} \cong {\mathbb Z}_{k}^{m-1}$. We denote by $S_i$ the compact Riemann surface underlying to the quotient $S/N_i$ for each $i$, and by $\pi^i : S \to S_i$  a branched regular covering map with deck group $N_i.$  Observe that $\phi_{i}$ induces an automorphism $\tau_i$ of $S_i$ of order $k$ such that $S/N \cong S_i/\langle \tau_i\rangle$, for each $i=1, \ldots, m.$
If $\pi_i : S_i \to \bar{\mathbb{C}}$ is a branched regular covering map with deck group $\langle \tau_i \rangle$ such that $\pi=\pi_i \circ \pi^i$ then, following \cite[Section 3.2]{H23}, we have that $S \mbox{ is isomorphic to the fiber product }\Pi_{i=1}^m (S_i, \pi_i).$

This description allows us to provide an explicit algebraic description of $S$ in terms of the branch values of $\pi_i.$
Later, we will make this description explicit for the case $m=2$.

\subsection{Descriptions in terms of generalized Fermat curves}
Let $(S, N)$ be a $\mathbb{Z}_{k}^m$-action of signature  $(0;k^{n+1})$, where $n \geqslant 3$, $k \geqslant 2$, $(n-1)(k-1)>2$ and $1 \leqslant m \leqslant n-1$.
After considering a suitable M\"{o}bius transformation, we can assume that the cone points of $S/N \cong \bar{\mathbb C}$ are given by the set 
$$\mathscr{B}_{\Lambda}=\{{q}_1=\infty, {q}_{2}=0, {q}_{3}=1, q_4, \ldots, {q}_{n+1}\}, \, \mbox{ where }\Lambda=(q_{4},\ldots,q_{n+1}) \in \Omega_{n}.$$

As discussed in \S\ref{CFG}, the tuple $\Lambda \in \Omega_{n}$ determines the generalized Fermat curve $C_{k}(\Lambda)$ of type $(k,n)$, and its generalized Fermat group is
$H:=\langle a_1, \ldots, a_n\rangle \cong \mathbb{Z}_{k}^{n}$
where $a_j$ is as \eqref{lin}. In this case, we consider the set $$\mathcal{F}(k, n,m):={\mathcal F}_{{\mathbb Z}_{k}^{m}}(0;k,\stackrel{n+1}{\ldots},k)=\{ K \leqslant H: H/K \cong \mathbb{Z}_{k}^{m}, \,\, \langle a_{j} \rangle \cap K=\{1\}, \,\, j=1,\ldots n+1\}.$$

\begin{rema}
The condition $\langle a_{j} \rangle \cap K=\{1\}$ asserts that eack $K \in \mathcal{F}(k, n,m)$ acts freely. If, in addition, $k$ is prime then the condition $\langle a_{j} \rangle \cap K=\{1\}$ is equivalent to
$a_j \notin K$.
\end{rema}

Theorem \ref{teoremaiso1} asserts that the ${\mathbb Z}_{k}^{m}$-actions of signature  $(0;k^{n+1})$ are parametrized by ${\mathcal F}(k,n,m)$.


\begin{prop}\label{p11}
Let $n \geqslant 3$, $k \geqslant 2$ and $1 \leqslant m \leqslant n-1$ be integers such that $(n-1)(k-1)>2$. If $(S,N)$ is a $\mathbb{Z}_{k}^m$-action of signature  $(0;k^{n+1})$, then there exist  $K_S \in {\mathcal F}(k,n,m)$ and a tuple $({q}_4, \ldots, {q}_{n+1}) \in \Omega_n$
such that $$(S, N) \mbox{ and } (C_{k}({q}_4, \ldots, {q}_{n+1})/K_S, H/K_S) \mbox{ are biholomorphically equivalent.}$$  
\end{prop}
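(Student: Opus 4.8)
The plan is to obtain Proposition~\ref{p11} by assembling the structure theory already recalled in \S\ref{CFG}: the existence theorem of \cite{GHL} realizing $(S,N)$ as a quotient of a generalized Fermat pair, the algebraic normal form $C_{k}(\Lambda)$, and the fixed-point description of $H$.

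First I would fix a branched regular covering map $\pi:S \to S/N$ with deck group $N$. Since $S/N$ is a sphere carrying exactly $n+1$ distinct cone points, after post-composing with a suitable M\"obius transformation I may assume that these cone points form the set $\mathscr{B}_{\Lambda}=\{\infty,0,1,q_{4},\ldots,q_{n+1}\}$; the $q_{j}$ are automatically pairwise distinct and different from $0$ and $1$, so $\Lambda=(q_{4},\ldots,q_{n+1})\in\Omega_{n}$. Next, apply the theorem of \cite{GHL} quoted in \S\ref{fdes}: as $N$ is abelian and $S/N$ has signature $(0;k^{n+1})$, there exist a generalized Fermat pair $(X,H_{0})$ of type $(k,n)$ with $X/H_{0}\cong S/N$ as orbifolds (in particular the cone points of $X/H_{0}$ are again $\mathscr{B}_{\Lambda}$) and a subgroup $K_{S}\lhd H_{0}$ acting freely on $X$ such that $(S,N)$ and $(X/K_{S},H_{0}/K_{S})$ are biholomorphically equivalent.

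By the algebraic-description theorem of \cite{GHL}, the pair $(X,H_{0})$ is biholomorphically equivalent to $(C_{k}(\Lambda),H)$ with $H=\langle a_{1},\ldots,a_{n}\rangle$ the group generated by the transformations in \eqref{lin}; and by the uniqueness of the generalized Fermat group, any such biholomorphism must carry $H_{0}$ onto $H$. Transporting $K_{S}$ along this biholomorphism yields a subgroup $K_{S}\leqslant H$, still acting freely on $C_{k}(\Lambda)$, with $(S,N)$ biholomorphically equivalent to $(C_{k}(\Lambda)/K_{S},H/K_{S})$. It then remains to verify $K_{S}\in\mathcal{F}(k,n,m)$. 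The chain of isomorphisms $H/K_{S}\cong H_{0}/K_{S}\cong N\cong\mathbb{Z}_{k}^{m}$ gives the quotient condition. For the intersection condition, recall that the nontrivial elements of $H$ with fixed points on $C_{k}(\Lambda)$ are exactly $a_{1},\ldots,a_{n},a_{n+1}=(a_{1}\cdots a_{n})^{-1}$ and their nontrivial powers, and every fixed point of a power of $a_{j}$ is already fixed by $a_{j}$; hence if $\langle a_{j}\rangle\cap K_{S}$ contained a nontrivial element it would have a fixed point, contradicting freeness of the $K_{S}$-action. Therefore $\langle a_{j}\rangle\cap K_{S}=\{1\}$ for all $j=1,\ldots,n+1$, so $K_{S}\in\mathcal{F}(k,n,m)$, as claimed.

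This argument is essentially bookkeeping on top of the cited results; the only point demanding care is making the normalization of the cone points to $\mathscr{B}_{\Lambda}$ compatible with the orbifold isomorphism $X/H_{0}\cong S/N$ and with the biholomorphism $(X,H_{0})\cong(C_{k}(\Lambda),H)$, so that a single tuple $\Lambda$ may be used throughout. This compatibility is guaranteed by the fact that the regular covering $\pi:X\to X/H_{0}$ is characteristic (see \eqref{pi} and the surrounding discussion), so an orbifold isomorphism of the base lifts to the cover, allowing one to match the labellings of $\mathscr{B}_{\Lambda}$ at each step.
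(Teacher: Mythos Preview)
Your proof is correct and follows exactly the approach the paper intends: the paper does not supply a detailed argument but simply states that the proposition is ``a consequence of the Fuchsian uniformization of generalized Fermat curves discussed in \S\ref{fdes},'' and you have faithfully unpacked that sentence by invoking the existence theorem for $(X,H_0,K_S)$, the algebraic model $(C_k(\Lambda),H)$, and the fixed-point description of $H$ to verify the condition $\langle a_j\rangle\cap K_S=\{1\}$. Your care about the compatibility of the normalization $\mathscr{B}_\Lambda$ across the various identifications is appropriate but not strictly needed here, since the proof of the quoted theorem already builds $(X,H_0)$ from a Fuchsian group uniformizing the \emph{given} orbifold $S/N$.
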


The following result describes the members of ${\mathcal F}(k,n,m)$.

\begin{theo}\label{clasificageneral}
Let $k \geqslant 2$, $n \geqslant 3$ and $1 \leqslant m \leqslant n-1$ be integers such that $(n-1)(k-1)>2.$ Then 
 $K \in {\mathcal F}(k,n,m)$ if and only if 
 \begin{enumerate}
 \item there are integers $1=k_{1}<k_{2}<\cdots<k_{m} \leqslant n$;
 \item there are integers   
$l_{j,i} \in \{0,1,\ldots,k-1\}$, $j=m+1,\ldots,n+1$ and $i=1,\ldots,m$,  satisfying
\begin{enumerate}
\item $(l_{j,1},\ldots,l_{j,m}) \neq (0,\ldots,0)$ for every $j=m+1,\ldots,n+1$, and 
\item $1+l_{m+1,i}+\cdots+l_{n+1,i} \equiv 0 \mod k$, for every $i=1,\ldots,m$, 
\end{enumerate}
\end{enumerate}
such that, if $\{u_{1}, u_{2}, \ldots, u_{n-m} \} = \{1,\ldots,n\} \setminus \{k_{1},\ldots,k_{m}\}$, where $u_{1}<\cdots<u_{n-m}$, then 
$$   
K=\langle  a_{k_{1}}^{l_{m+1,1}}\cdots \, a_{k_{m}}^{l_{m+1},m} a_{u_{1}}^{-1}, \ldots, a_{k_{1}}^{l_{n,1}}\cdots \, a_{k_{m}}^{l_{n},m} a_{u_{n-m}}^{-1}\rangle \cong {\mathbb Z}_{k}^{n-m}.
$$
\end{theo}
\begin{proof}
Each group $K \in {\mathcal F}(k,n,m)$ is the kernel of some group epimorphism $\theta:H \to {\mathbb Z}_{k}^{m}$ with the property that, for every $j=1,\ldots,n+1$, the element $\theta(a_{j})$ has order $k$. Since $a_{1}\cdots \, a_{n+1}=1$, the surjectivity ensures that there integers $1=k_{1}<k_{2}<\cdots<k_{m} \leqslant n$ such that 
$\phi_{1}:=\theta(a_{k_{1}}), \ldots, \phi_{m}:=\theta(a_{k_{m}})$ form a set of generators of ${\mathbb Z}_{k}^{m}$. Thus, 
if $\{u_{1}, u_{2}, \ldots, u_{n-m} \} = \{1,\ldots,n\} \setminus \{k_{1},\ldots,k_{m}\}$, where $u_{1}<\cdots<u_{n-m}$, then 
$\theta(a_{u_{j}})=\phi_{1}^{l_{j,1}}  \cdots \, \phi_{m}^{l_{j,m}}$,  for some $l_{j,i} \in \{0,1,\ldots,k-1\}$.
The condition that $\langle a_{j} \rangle \cap K=\{1\}$ asserts that $(l_{j,1},\ldots,l_{j,m}) \neq (0,\ldots,0)$ for each $j$. Besides, the fact that the product $a_{1}  \cdots \, a_{n+1}$ is trivial implies that 
$1+l_{m+1,i}+\cdots+l_{n+1,i} \equiv 0 \mod k$, for each $i=1,\ldots,m$. In this way, the kernel $K$ of $\theta$ is generated by the $n-m+1$ elements
$$
a_{k_{1}}^{l_{m+1,1}}\cdots \, a_{k_{m}}^{l_{m+1},m} a_{u_{1}}^{-1}, \ldots, a_{k_{1}}^{l_{n,1}}\cdots \, a_{k_{m}}^{l_{n},m} a_{u_{n-m}}^{-1},
a_{k_{1}}^{l_{n+1,1}}\cdots \, a_{k_{m}}^{l_{n+1},m} a_{n+1}^{-1}.
$$The equality 
$$
a_{k_{1}}^{l_{n+1,1}}\cdots \, a_{k_{m}}^{l_{n+1},m} a_{n+1}^{-1}=
a_{k_{1}}^{l_{n+1,1}}\cdots \, a_{k_{m}}^{l_{n+1},m} (a_{1} \cdots \, a_{n})=
a_{k_{1}}^{1+l_{n+1,1}}\cdots \, a_{k_{m}}^{1+l_{n+1},m} (a_{u_{1}} \cdots \, a_{u_{n-m}})=$$
$$
a_{k_{1}}^{-(l_{m+1,1}+\cdots+l_{n,1})}\cdots \, a_{k_{m}}^{-(l_{m+1,m}+\cdots+l_{n,m})} (a_{u_{1}} \cdots \, a_{u_{n-m}})=
(a_{k_{1}}^{l_{m+1,1}}\cdots \, a_{k_{m}}^{l_{m+1},m} a_{u_{1}}^{-1})^{-1} \cdots (a_{k_{1}}^{l_{n,1}}\cdots \, a_{k_{m}}^{l_{n},m} a_{u_{n-m}}^{-1})^{-1},
$$
shows that $K$ is generated by the $n-m$ elements
$
a_{k_{1}}^{l_{m+1,1}}\cdots \, a_{k_{m}}^{l_{m+1},m} a_{u_{1}}^{-1}, \ldots, a_{k_{1}}^{l_{n,1}}\cdots \, a_{k_{m}}^{l_{n},m} a_{u_{n-m}}^{-1},
$ as claimed.
\end{proof}

As a consequence of the above result, we observe the following.
\begin{coro}
Let $k \geqslant 2$, $n \geqslant 3$ and $1 \leqslant m \leqslant n-1$ be integers such that $(n-1)(k-1)>2.$ If
$K \in {\mathcal F}(k,n,m)$ then there exists $\Phi \in {\rm Aut}_{g}(H)$ and there are integers   
$l_{j,i} \in \{0,1,\ldots,k-1\}$, $j=m+1,\ldots,n+1$ and $i=1,\ldots,m$,  satisfying
\begin{enumerate}
\item $(l_{j,1},\ldots,l_{j,m}) \neq (0,\ldots,0)$ for every $j=m+1,\ldots,n+1$, and 
\item $1+l_{m+1,i}+\cdots+l_{n+1,i} \equiv 0 \mod k$, for every $i=1,\ldots,m$, 
\end{enumerate}such that 
$$   
\Phi(K)=\langle  a_{1}^{l_{m+1,1}}\cdots \, a_{m}^{l_{m+1},m} a_{m+1}^{-1}, \ldots, a_{1}^{l_{n,1}}\cdots \, a_{m}^{l_{n},m} a_{n}^{-1}\rangle.
$$
\end{coro}

\subsection{Topological classification of $\mathbb{Z}_k^m$-actions}
Let $n \geqslant 3$, $k \geqslant 2$ and $1 \leqslant m \leqslant n-1$ be integers such that $(n-1)(k-1)>2$. Let $(S, N)$ be a $\mathbb{Z}_k^m$-action of signature  $(0;k^{n+1})$, and assume that the cone points of $S/N$ are $\infty, 0, 1, \hat{q}_4, \ldots, \hat{q}_{n+1}$. By Proposition \ref{p11}, there is a subgroup $K_S \in {\mathcal F}(k,n,m)$ such that 
$(S, N)$ and $(C_{k}(\hat{\Lambda})/K_S, H/K_S)$ are biholomorphically equivalent, where
$\hat{\Lambda}=(\hat{q}_4, \ldots, \hat{q}_{n+1}) \in \Omega_{n}$.

As a consequence of Corollary \ref{corolario1}, we obtain the following result. We provide an alternative proof as it will be needed in the proof of Proposition \ref{propo5}.

\begin{prop}\label{uno}
Let $n \geqslant 3$, $k \geqslant 2$ and $1 \leqslant m \leqslant n-1$ be integers such that $(n-1)(k-1)>2$, and fix $\Lambda \in \Omega_n$.
If $(S, N)$ is a $\mathbb{Z}_k^m$-action of signature  $(0;k^{n+1})$, then  there exists $K \in \mathcal{F}(k, n,m)$ such that $$(S, N) \mbox{ and }(S_K=C_{k}(\Lambda)/K, N_K=H/K) \mbox{ are topologically equivalent.}$$
\end{prop}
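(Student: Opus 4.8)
The plan is to reduce the statement to Proposition \ref{p11} and then transport the resulting biholomorphic model onto the fixed curve $C_{k}(\Lambda)$ by a homeomorphism that lifts from the base orbifold. First I would invoke Proposition \ref{p11}: after a M\"obius change of coordinates we may assume the cone points of $S/N$ are $\infty, 0, 1, \hat{q}_4, \ldots, \hat{q}_{n+1}$ for some $\hat{\Lambda}=(\hat{q}_4, \ldots, \hat{q}_{n+1}) \in \Omega_n$, and that there is $K_S \in {\mathcal F}(k,n,m)$ with $(S,N)$ biholomorphically equivalent to $(C_{k}(\hat{\Lambda})/K_S, H/K_S)$, where $H=\langle a_1, \ldots, a_n\rangle$ denotes the generalized Fermat group of $C_{k}(\hat{\Lambda})$. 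Since a biholomorphism is in particular an \oph{}, it suffices to exhibit $K \in {\mathcal F}(k,n,m)$ such that $(C_{k}(\Lambda)/K, H/K)$ is topologically equivalent to $(C_{k}(\hat{\Lambda})/K_S, H/K_S)$, with $H=\langle a_1, \ldots, a_n\rangle$ now the generalized Fermat group of $C_{k}(\Lambda)$.

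Next I would build the comparison homeomorphism. Choose an \oph{} $\hat{f}:\bar{\mathbb{C}} \to \bar{\mathbb{C}}$ fixing $\infty, 0, 1$ and sending $\hat{q}_j$ to $q_j$ for $j=4,\ldots,n+1$; such an $\hat{f}$ exists because any index-preserving bijection between two ordered $(n+1)$-point subsets of the sphere is realized by an orientation-preserving self-homeomorphism. By the characteristic property of the covering $\pi:C_{k}(\hat{\Lambda}) \to C_{k}(\hat{\Lambda})/H$ recalled in \S\ref{CFG} (the theorem of \cite{GHL}), $\hat{f}$ lifts to an \oph{} $f$ from $C_{k}(\hat{\Lambda})$ to $C_{k}(\Lambda)$ with $\hat{f}\circ \pi = \pi \circ f$, and $f$ conjugates the generalized Fermat group of $C_{k}(\hat{\Lambda})$ onto that of $C_{k}(\Lambda)$; I denote both groups by $H$ and set $K:=fK_Sf^{-1} \leqslant H$.

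The heart of the argument is to check that, under the standard identification of both generalized Fermat groups with $\langle a_1, \ldots, a_n\rangle \cong \mathbb{Z}_{k}^{n}$, one has $K=K_S$. For this I would argue that $f$ induces the identity automorphism of $H$: the automorphism $\Phi_f$ determined by $\Phi_f(a_j)=f a_j f^{-1}$ is geometric (by the $\Lambda$-independence of $\mbox{Aut}_g(H)$, after composing $f$ with a fixed identification of the two Fermat curves), and by the computation in the proof of Proposition \ref{tp} it equals $a_{\sigma(j)}$, where $\sigma \in \mathbf{S}_{n+1}$ is the permutation of cone points induced by $\hat{f}$. Since $\hat{f}$ fixes $\infty, 0, 1$ and carries $\hat{q}_j$ to $q_j$ with matching labels, $\sigma$ is trivial, hence $\Phi_f=\mathrm{id}_{H}$ and $K=K_S$ as subgroups of $\langle a_1, \ldots, a_n\rangle$. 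In particular $K \in {\mathcal F}(k,n,m)$, because $\langle a_j\rangle \cap K_S=\{1\}$ for all $j$ and $H/K_S \cong \mathbb{Z}_{k}^{m}$. Finally, since $fHf^{-1}=H$ and $fK_Sf^{-1}=K$, the map $f$ descends to an \oph{} $g:C_{k}(\hat{\Lambda})/K_S \to C_{k}(\Lambda)/K=S_K$ with $g(H/K_S)g^{-1}=H/K=N_K$; composing suitably with the biholomorphism provided by Proposition \ref{p11} yields the required topological equivalence of $(S,N)$ and $(S_K, N_K)$.

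I expect the only genuine obstacle to be the middle step, namely confirming that $f$ conjugates $K_S$ exactly to (the copy of) $K_S$; everything else is a diagram chase with the commutative squares relating $C_{k}(\Lambda)$, $C_{k}(\hat{\Lambda})$, $S_K$, $S$ and $\bar{\mathbb{C}}$. That step rests on Proposition \ref{tp} and the isomorphism $\mbox{Aut}_g(H)\cong \mathbf{S}_{n+1}$, which together reduce it to the assertion that a homeomorphism of $C_{k}(\hat{\Lambda})$-to-$C_{k}(\Lambda)$ type inducing the trivial permutation of cone points induces the identity automorphism of the generalized Fermat group — and the labeled choice of $\hat{f}$ was made precisely so that this permutation is trivial.
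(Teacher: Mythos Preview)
Your proposal is correct and follows essentially the same route as the paper's own argument: invoke Proposition \ref{p11} to get a biholomorphic model over $C_k(\hat{\Lambda})$, choose an orientation-preserving homeomorphism $\hat f$ of the sphere matching the labeled cone points, lift it to $f:C_k(\hat{\Lambda})\to C_k(\Lambda)$ via the characteristic covering, and observe that $fK_Sf^{-1}=K_S$ so that $f$ descends to the desired topological equivalence. The paper asserts the equality $fK_Sf^{-1}=K_S$ in one line (``the way we have chosen $\hat f$''), whereas you spell out the underlying reason via Proposition \ref{tp} and the triviality of the induced permutation; this is exactly the intended justification.
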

\begin{proof}
We fix $\Lambda=({q}_4, \ldots, {q}_{n+1}) \in \Omega_n$
and consider  an orientation-preserving homeomorphism  $\hat{f}: \bar{\mathbb{C}} \to \bar{\mathbb{C}}$ such that 
$\hat{f}(\infty)=\infty, \hat{f}(0)=0, \hat{f}(1)=1,$ and $\hat{f}(\hat{q}_j)={q}_{j},$ for $j=4, \ldots, n+1.$
It follows that there is an orientation-preserving homeomorphism $f: C_{k}(\hat{\Lambda}) \to C_{k}(\Lambda)$ such that $\hat{f} \circ \pi=\pi \circ f$. In particular,  one has that $fHf^{-1}=H$. The way as we have chosen $\hat{f}$ asserts that $fK_Sf^{-1}=K_S$. Now, if we define $S_{K_S}:=C_{k}(\Lambda)/K_{S}$ and $N_{K_S}:=H/K_{S},$ then $f$ induces an orientation-preserving homeomorphism $g: S_{K_S} \to S$ such that the following diagram commutes.
{\small
$$\begin{tikzpicture}[node distance=3.5 cm, auto]
\node (P) {$C_{k}(\Lambda)$};
\node (B) [right of=P] {$C_{k}(\hat{\Lambda})$};
\node (A) [below of=P, node distance=1.5 cm] {$S_{K_S}$};
\node (C) [below of=B, node distance=1.5 cm] {$S$};
\node (D) [below of=A, node distance=1.5 cm] {$\bar{\mathbb{C}}$};
\node (E) [below of=C, node distance=1.5 cm] {$\bar{\mathbb{C}}$};
\draw[->] (P) to node {\tiny $f$} (B);
\draw[->] (A) to node {\tiny $N_{K_S}$} (D);
\draw[->] (C) to node[swap]  {\tiny $N$} (E);
\draw[->] (D) to node {\tiny $\hat{f}$} (E);
\draw[->] (P) to node   {\tiny $K_S$} (A);
\draw[->] (A) to node  {\tiny $g$} (C);
\draw[->] (B) to node [swap] {\tiny $K_S$} (C);
\draw[->, bend left] (B) to node {\tiny $H$} (E);
\draw[->, bend right] (P) to node [swap] {\tiny $H$} (D);
\end{tikzpicture}$$
}
Note that $gN_{{K_S}}g^{-1}=N.$ 
\end{proof}

As a consequence of Theorem \ref{abeliano2}, we observe the following result. 


\begin{theo}\label{teorema2}
Let $n \geqslant 3$, $k \geqslant 2$ and $1 \leqslant m \leqslant n-1$ be integers such that $(n-1)(k-1)>2$. Let 
 $K_1, K_2 \in \mathcal{F}(k, n,m).$ The pairs $(S_{K_1}, N_{K_1})$ and $(S_{K_2}, N_{K_2})$ are topologically equivalent if and only if
there exists $\Phi \in \mbox{Aut}_g(H)$ such that $\Phi(K_1)=K_2.$
\end{theo}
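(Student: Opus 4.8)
The plan is to translate topological equivalence of the two pairs into an equation between monodromy epimorphisms, and then to recognize that equation as exactly the assertion that some geometric automorphism of $H$ carries $K_1$ to $K_2$. The starting point is that both $S_{K_1}/N_{K_1}$ and $S_{K_2}/N_{K_2}$ are biholomorphic to the fixed orbifold $C_k(\Lambda)/H$, which is uniformized by the fixed Fuchsian group $\Gamma=\langle x_1,\ldots,x_{n+1}\mid x_1^k=\cdots=x_{n+1}^k=x_1\cdots x_{n+1}=1\rangle$. So we may take $\Gamma_1=\Gamma_2=\Gamma$ in the reformulation of topological equivalence recalled in Subsection 2.2: the pairs $(S_{K_i},N_{K_i})$ correspond to surjections $\theta_i:\Gamma\to N_{K_i}=H/K_i$ that factor as $\Gamma\twoheadrightarrow H=\Gamma/\Gamma'\twoheadrightarrow H/K_i$, the first map being the canonical one (which is characteristic) and the second the quotient $q_i$ with $\ker q_i=K_i$.

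First I would prove the ``if'' direction. Suppose $\Phi\in\mathrm{Aut}_g(H)$ satisfies $\Phi(K_1)=K_2$. By definition $\Phi=\Phi_f$ for some $f\in\mathrm{Hom}^+_H(C_k(\Lambda))$; since $f$ normalizes $H$ and conjugation by $f$ sends $K_1$ to $K_2$, $f$ descends to an orientation-preserving homeomorphism $\bar f:C_k(\Lambda)/K_1\to C_k(\Lambda)/K_2$, i.e. $\bar f:S_{K_1}\to S_{K_2}$, and $\bar f\,(H/K_1)\,\bar f^{-1}=H/K_2$, that is $\bar f\, N_{K_1}\,\bar f^{-1}=N_{K_2}$. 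This is precisely topological equivalence of the pairs.

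For the ``only if'' direction, assume $\varphi:S_{K_1}\to S_{K_2}$ is an orientation-preserving homeomorphism with $\varphi^{-1}N_{K_2}\varphi=N_{K_1}$. Then $\varphi$ induces a homeomorphism $\bar\varphi$ of the underlying orbifolds $S_{K_i}/N_{K_i}\cong C_k(\Lambda)/H$, hence a geometric automorphism $\psi$ of $\Gamma$ with $\rho_\varphi\circ\theta_1=\theta_2\circ\psi$ for the induced isomorphism $\rho_\varphi:N_{K_1}\to N_{K_2}$. Because the covering $\Gamma\to H$ (equivalently $C_k(\Lambda)\to C_k(\Lambda)/H$) is characteristic, $\psi$ descends to an automorphism $\bar\psi$ of $H=\Gamma/\Gamma'$ which is geometric, i.e. $\bar\psi\in\mathrm{Aut}_g(H)$; lift $\psi$ to a homeomorphism $f$ of $C_k(\Lambda)$ with $fHf^{-1}=H$ and $\Phi_f=\bar\psi$. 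I would then check that $f$ covers $\varphi$ up to an element of $H$ — using again characteristicity of $\Gamma\to H$ to lift, and the freeness of the $K_i$-actions to identify the relevant covering groups — so that conjugation by $f$ carries $\ker(\theta_1)/\Gamma'=K_1$ to $\ker(\theta_2)/\Gamma'=K_2$; hence $\Phi:=\Phi_f$ satisfies $\Phi(K_1)=K_2$.

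The main obstacle is the bookkeeping in the ``only if'' direction: passing from the homeomorphism $\varphi$ of the quotients $S_{K_i}$ to a homeomorphism $f$ of the common cover $C_k(\Lambda)$ in a way that is simultaneously compatible with the $H$-action (so that it defines an element of $\mathrm{Hom}^+_H(C_k(\Lambda))$ and hence a genuine geometric automorphism) and with the subgroups $K_i$ (so that $\Phi_f(K_1)=K_2$). The two uses of the characteristic property — first to lift the orbifold homeomorphism to $C_k(\Lambda)$ normalizing $H$, then to match this lift with a lift of $\varphi$ — must be organized carefully; once both lifts exist they differ by a deck transformation of $C_k(\Lambda)\to S_{K_1}$, which lies in $H/K_1$ lifted back into $H$, and absorbing this ambiguity is what makes the conjugation land exactly on $K_2$. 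Everything else is a routine diagram chase of the kind already carried out in the proof of Proposition \ref{uno}.
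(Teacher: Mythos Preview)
Your proposal is correct and follows essentially the same route as the paper: descend the equivalence to the orbifold level and then lift via the characteristic property of the cover. The paper phrases this directly in terms of $C_k(\Lambda)\to\bar{\mathbb{C}}$ rather than through the Fuchsian group $\Gamma$ (descend $g$ to $h\in\mathrm{Hom}^+(\mathcal{O}(\Lambda))$, lift $h$ to $f\in\mathrm{Hom}^+_H(C_k(\Lambda))$, and conclude $fK_1f^{-1}=K_2$), and in fact glosses over the lift-compatibility issue you correctly flag as the only real subtlety; your resolution---adjusting the lift by an element of $H$ so that it covers $g$ and not merely $h$---is exactly what is needed to justify the paper's final assertion.
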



Observe that there is a natural action $$\mbox{Aut}_g(H) \times \mathcal{F}(k, n,m) \to \mathcal{F}(k, n,m) \mbox{ given by } (\Phi, K) \mapsto \Phi(K).$$
In our situation, Corollary \ref{abeliano3} reads as follows.

\begin{coro}\label{felix}
The number of pairwise topologically inequivalent $\mathbb{Z}_k^m$-actions with signature  $(0;k^{n+1})$ is equal to the 
cardinality of the quotient set $\mathcal{F}(k, n,m)/\mbox{Aut}_g(H)$.
\end{coro}

\begin{example}[The classical case $m=1$]\label{casom=1}
The case $m=1$ and $k=2$ corresponds to the hyperelliptic involution, which is known to be unique. So, let us restrict to the case $k\geqslant 3$, and $n \geqslant 2$ such that $(n-1)(k-1)>2$. 
If we set $$\Delta=\{(l_{2},\ldots,l_{n}): 1 \leqslant l_{2},\ldots,l_{n} \leqslant k-1, \; \gcd(k,l_{2})=\cdots=\gcd(k,l_{n})=\gcd(k,1+l_{2}+\cdots+l_{n})=1\},$$
then the elements of ${\mathcal F}(k,n,1)$ are those of the form
$K(l_{2},\ldots,l_{n})=\langle a_{1}^{l_{2}}a_{2}^{-1},\ldots,a_{1}^{l_{n}}a_{n}^{-1}\rangle$, where $(l_{2},\ldots,l_{n}) \in \Delta$.

The Riemann surface $S_{K(l_{2},\ldots,l_{n})}=C_{k}(\Lambda)/K(l_{2},\ldots,l_{n})$ is algebraically given by
$$y^{k}=x^{l_{2}}(x-1)^{l_{3}}\prod_{j=1}^{n-2}(x-\lambda_{j})^{l_{3+j}},$$
where $\Lambda=(\lambda_{1},\ldots,\lambda_{n-2}) \in \Omega_{n}$, and $l_{n+1} \in \{1,\ldots,k-1\}$ is congruent to $-(1+l_{2}+\cdots+l_{n})$ modulo $k$.

If $l \in \{1,\ldots,k-1\}$ satisfies that $\gcd(k,l)=1$, then we denote by $l^{-1} \in \{1,\ldots,k-1\}$ the element such that $l l^{-1} \equiv 1 \mbox{ mod }k$. The actions of $\Phi_{1}$ and $\Phi_{2}$ on these groups are as follows:
$$\Phi_{1}(K(l_{2},\ldots,l_{n}))=K(l_{2}^{-1},l_{2}^{-1}l_{3},\ldots,l_{2}^{-1}l_{n}),$$
$$\Phi_{2}(K(l_{2},\ldots,l_{n}))=K(q,ql_{2},ql_{3},\ldots,ql_{n-1}), \; q=k-(1+l_{2}+\cdots+l_{n})^{-1}.$$

As a consequence of Corollary \ref{uno}, the number of topologically different ${\mathbb Z}_{k}$-actions with signature $(0;k,\stackrel{n+1}{\ldots},k)$ is equal to the cardinality of $\Delta/\langle B_{1},B_{2}\rangle$, where
$$B_{1}(l_{2},\ldots,l_{n})=(l_{2}^{-1},l_{2}^{-1}l_{3},\ldots,l_{2}^{-1}l_{n}), \;
B_{2}(l_{2},\ldots,l_{n})=(q,ql_{2},ql_{3},\ldots,ql_{n-1}), \; q=k-(1+l_{2}+\cdots+l_{n})^{-1}.$$
\end{example}

%
%
%
%
%

\subsection{Topological actions and extra automorphisms}\label{state2}
Let $n \geqslant 3$, $k \geqslant 2$ and $2 \leqslant m \leqslant n-1$ be integers such that $(n-1)(k-1)>2$. We consider triples $(S,N, G)$ where $(S, N)$ is a $\mathbb{Z}_k^m$-action of signature 
 $(0;k^{n+1})$, and $S$ is endowed with a group of automorphisms $G$ such that $N \trianglelefteq G \leqslant \mbox{Aut}(S).$ Consider $\Lambda=(q_4, \ldots, q_{n+1}) \in \Omega_{n}$  such that $$S \cong C_{k}(\Lambda)/K_S \, \mbox{ and } \, N \cong H/K_S \mbox{ for some }K_S \in \mathcal{F}(k,n, m).$$We recall that  there is a short exact sequence of groups $1\longrightarrow H \longrightarrow \mbox{Aut}(C_{k}(\Lambda))\stackrel{\theta}{\longrightarrow} A\longrightarrow 1,$ where $A$ is the $\mbox{M\"{o}b}(\mathbb{C})$-stabilizer of $\mathscr{B}_{\Lambda}=\{\infty, 0, 1, q_{4},\ldots, q_{n+1}\}.$ As $A$ has a subgroup $L$ isomorphic to $G/N$, there is an induced short exact sequence of groups\begin{equation*}\label{short7} 1\longrightarrow H \longrightarrow Q_{S, G}:=\theta^{-1}(L)\stackrel{\theta}{\longrightarrow} L\longrightarrow 1.\end{equation*}Observe that $H \trianglelefteq Q_{S, G}$ and $Q_{S, G}/H \cong G/N$, $K_S \trianglelefteq {Q_{S,G}}$ and $Q_{S,G}/K_S \cong G$, and $S/G \cong C_{k}(\Lambda)/{Q_{S,G}}.$

All the above can be summarized in the following commutative diagram, where $C=C_{k}(\Lambda).$ 
{\small
$$\begin{tikzpicture}[node distance=5 cm, auto]
\node (P) {$S$};
\node (B) [right of=P, node distance=4 cm] {$C$};
\node (A) [below right of=P, node distance=2.5 cm] {$S/N=C/H$};
\node (C) [below of=A, node distance=2 cm] {$\,\,\,\,\,\,\,S/G=C/Q_{S,G}$};
\draw[->] (P) to node [swap]{\tiny $N$} (A);
\draw[->] (B) to node {\tiny $K_S$} (P);
\draw[->] (B) to node {\tiny $H$} (A);
\draw[->] (A) to node  {\tiny $G/N$} (C);
\draw[->, bend right] (P) to node  [swap] {\tiny $G$} (C);
\draw[->, bend left] (B) to node {\tiny $Q_{S, G}$} (C);
\end{tikzpicture}$$
}
We recall that if $\mathcal{Q}$ is any group of orientation-preserving homeomorphisms of $C_{k}(\Lambda)$ such that $H \triangleleft \mathcal{Q}$, then we may consider the representation (see the proof of Proposition \ref{tp})
\begin{equation*}\label{repre}\rho_{Q}: \Q \to \mbox{Aut}_g(H) \mbox{ given by }\rho_{Q}(f)=\Phi_f.\end{equation*}
We denote its image by $\Q^*.$ In particular, as $H \triangleleft Q_{S,G}$, we may consider 
the group epimorphism  \begin{equation}\label{repre222}\rho_{Q_{S,G}}: Q_{S,G} \to \mbox{Aut}_{g}(H) \mbox{ given by }\rho_{Q_{S,G}}(f)=\Phi_f.\end{equation}
 As the kernel of $\rho_{Q_{S,G}}$ is $H,$ we have that  $Q_{S,G}/H\cong Q_{S,G}^*$. Note that $\rho_{Q_{S,G}}(f)$ is uniquely determined by the permutation induced by  $\theta(f)$. We define $$\mathscr{C}_k({Q_{S,G}}):=\{ K \in \mathcal{F}(k,n,m) :   K \mbox{ is } Q_{S,G}^*\mbox{-invariant}\}.$$

Note that $\mathscr{C}_k({Q_{S,G}})$ is nonempty as $K_S$ belongs to it. If $K \in \mathscr{C}_k({Q_{S, G}})$ then $$(S_K=C_{k}(\Lambda)/K, N_K=H/K)$$is a $\mathbb{Z}_k^m$-action of signature  $(0;k^{n+1})$ such that there exists $\Q \leqslant \mbox{Hom}_{H}^+(C_{k}(\Lambda))$ 
which contains $H$ and $K$ as normal subgroups, satisfying $\Q^*=Q_{S, G}^*$ and that $$N_K \trianglelefteq G_K = \Q/K \leqslant \mbox{Hom}^+(S_K) \mbox { and } G_K/N_K \cong \Q/H \leqslant \mbox{Hom}^+(\bar{\mathbb{C}}).$$In addition, $S_K/G_K$ and $C_{k}(\Lambda)/\Q$ are equivalent, as topological orbifolds.

\begin{prop}\label{propo5}
Let $n \geqslant 3$, $k \geqslant 2$ and $1 \leqslant m \leqslant n-1$ be integers such that $(n-1)(k-1)>2$. Let $(S, N, G)$ be a triple such that  $(S, N)$ is a $\mathbb{Z}_k^m$-action of signature 
 $(0;k^{n+1})$ and $S$ admits a group of automorphisms $G$ such that $N \trianglelefteq G \leqslant \mbox{Aut}(S).$
Let $C_{k}(\Lambda)$ and $Q_{S,G}^* \leqslant \mbox{Aut}_g(H)$ be as before. Then, up to topological equivalence, each triple $(\hat{S}, \hat{N}, \hat{G})$ such that $(\hat{S}, \hat{N})$ is a $\mathbb{Z}_k^m$-action of signature  $(0;k^{n+1})$ and $\hat{S}$ admits a group of automorphisms $\hat{G}$ with $\hat{N} \trianglelefteq \hat{G} \leqslant \mbox{Aut}(\hat{S})$ and satisfying that $Q_{\hat{S}, \hat{G}}^*=Q_{S,G}^*$ corresponds to a member of $\mathscr{C}_k(Q_{S,G}).$
\end{prop}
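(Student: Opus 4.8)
The plan is to show that an arbitrary triple $(\hat S,\hat N,\hat G)$ of the prescribed type, whose associated permutational data equals that of $(S,N,G)$, is topologically equivalent to one built from a subgroup in $\mathscr{C}_k(Q_{S,G})$. First I would apply Proposition \ref{p11} (or rather Theorem \ref{coro2} and the discussion preceding Proposition \ref{uno}) to $(\hat S,\hat N)$: since $\hat S/\hat N$ has signature $(0;k^{n+1})$, there is $\hat\Lambda=(\hat q_4,\ldots,\hat q_{n+1})\in\Omega_n$ and a subgroup $K_{\hat S}\in\mathcal{F}(k,n,m)$ with $(\hat S,\hat N)\cong(C_k(\hat\Lambda)/K_{\hat S},H/K_{\hat S})$ biholomorphically. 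Exactly as in the construction of $Q_{S,G}$, the extra automorphism group $\hat G$ fits into a short exact sequence $1\to H\to Q_{\hat S,\hat G}=\theta^{-1}(\hat L)\to\hat L\to 1$ where $\hat L\leqslant A_{\hat\Lambda}$ is the M\"obius stabilizer of $\mathscr{B}_{\hat\Lambda}$ with $\hat L\cong\hat G/\hat N$, and $K_{\hat S}$ is $Q_{\hat S,\hat G}^*$-invariant (because $K_{\hat S}\lhd Q_{\hat S,\hat G}$), so $K_{\hat S}\in\mathscr{C}_k(Q_{\hat S,\hat G})$.

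The next step is to transport everything from the parameter $\hat\Lambda$ to the fixed parameter $\Lambda$. Here I would use the Remark following Proposition \ref{tp} together with the characteristic property of $\pi$: choose an orientation-preserving homeomorphism $\hat f:\bar{\mathbb C}\to\bar{\mathbb C}$ fixing $\infty,0,1$ and sending $\hat q_j$ to $q_j$ (after first composing with a permutation, if necessary, so that the cone points are matched in the order dictated by the common permutational action $Q_{S,G}^*=Q_{\hat S,\hat G}^*$). Lifting $\hat f$ gives an orientation-preserving homeomorphism $f:C_k(\hat\Lambda)\to C_k(\Lambda)$ with $fHf^{-1}=H$, and since $\hat f$ was chosen compatibly, $fK_{\hat S}f^{-1}=:K\in\mathcal{F}(k,n,m)$. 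The conjugate $fQ_{\hat S,\hat G}f^{-1}=:\mathcal{Q}$ is a finite group of orientation-preserving homeomorphisms of $C_k(\Lambda)$ normalizing both $H$ and $K$; I would then check that $\mathcal{Q}^*=\rho(\mathcal{Q})$ equals $Q_{S,G}^*$ in $\mbox{Aut}_g(H)$, using that conjugation by $f$ acts trivially on $\mbox{Aut}_g(H)$ in the sense made precise in that Remark (the geometric automorphism induced by $fTf^{-1}$ is the same as that induced by $T$, as it depends only on the underlying permutation, which $\hat f$ preserves). This shows $K\in\mathscr{C}_k(Q_{S,G})$.

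Finally I would assemble the topological equivalence. Passing to quotients, $f$ descends to an orientation-preserving homeomorphism $g:S_K=C_k(\Lambda)/K\to C_k(\hat\Lambda)/K_{\hat S}\cong\hat S$ with $gN_Kg^{-1}=\hat N$ and $gG_Kg^{-1}=\hat G$, where $G_K=\mathcal{Q}/K$ (noting $\mathcal{Q}=fQ_{\hat S,\hat G}f^{-1}$ descends to $\hat G$ under the identification $C_k(\hat\Lambda)/K_{\hat S}\cong\hat S$). Hence $(S_K,N_K,G_K)$ is topologically equivalent to $(\hat S,\hat N,\hat G)$, and $K$ is the desired member of $\mathscr{C}_k(Q_{S,G})$. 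The main obstacle, and the step requiring the most care, is the bookkeeping around the permutational action: one must be sure that the possible reordering of the cone points needed to align $\hat\Lambda$ with $\Lambda$ is permissible precisely because the two triples induce the \emph{same} subgroup $Q_{S,G}^*$ of $\mbox{Aut}_g(H)\cong\mathbf{S}_{n+1}$, so that after this reordering $K$ is genuinely $Q_{S,G}^*$-invariant rather than invariant under some conjugate; this is where the hypothesis $Q_{\hat S,\hat G}^*=Q_{S,G}^*$ (as opposed to mere conjugacy) is used, and it is what makes the conclusion land in $\mathscr{C}_k(Q_{S,G})$ on the nose.
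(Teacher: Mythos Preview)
Your proposal is correct and follows essentially the same route as the paper's proof: both arguments realize $(\hat S,\hat N)$ biholomorphically on a generalized Fermat curve at some parameter $\hat\Lambda$, transport to the fixed $\Lambda$ via an orientation-preserving homeomorphism $\hat f$ (lifted to $f$) that matches cone points in order, and then verify that the resulting subgroup lies in $\mathscr{C}_k(Q_{S,G})$ by checking that the transported extension group induces the same $Q_{S,G}^*$ in $\mbox{Aut}_g(H)$. The paper phrases the last step slightly differently---it passes to $g^{-1}\hat G g$ on $S_{K_{\hat S}}$, lifts that to a group $\mathcal{Q}$ on $C_k(\Lambda)$, and invokes the topological-orbifold isomorphism $S/G\cong S_{K_{\hat S}}/(g^{-1}\hat G g)$ to identify the conjugation action with $Q_{S,G}^*$---whereas you conjugate $Q_{\hat S,\hat G}$ directly by $f$ and appeal to the parameter-independence of $\mbox{Aut}_g(H)$; these are two presentations of the same computation, and note that with $\hat f$ chosen as in Proposition~\ref{uno} one actually has $fK_{\hat S}f^{-1}=K_{\hat S}$, so no extra permutation is needed.
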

\begin{proof} 
Let $(\hat{S}, \hat{N}, \hat{G})$ be a triple as in the statement of the proposition, and let $\infty, 0, 1,$ $\hat{q}_4, \ldots, \hat{q}_{n+1}$ be the cone points of $\hat{S}/\hat{N},$ so $\hat{\Lambda}=(\hat{q}_{4},\ldots,\hat{q}_{n+1}) \in \Omega_{n}$. By Proposition \ref{uno}, there exists $K_{\hat{S}} \in \mathcal{F}(k, n,m)$ such that 
$(\hat{S}, \hat{N})$ and $(S_{K_{\hat{S}}}=C_{k}(\Lambda)/K_{\hat{S}}, N_{K_{\hat{S}}}=H/K_{\hat{S}})$ are topologically equivalent. In addition, there is a group ${Q}_{\hat{S}, \hat{G}}\leqslant \mbox{Aut}(C_{k}(\hat{\Lambda}))$ such that $H, K_{\hat{S}} \trianglelefteq \hat{Q}_{\hat{S}, \hat{G}}$ and  $\hat{G}\cong\hat{Q}_{\hat{S}, \hat{G}}/K_{\hat{S}}$. 
All the above is summarized in the following commutative diagram, where $f,g$ and $\hat{f}$ are as in the proof of Proposition \ref{uno}.
{\small
$$
\begin{tikzpicture}[node distance=4.5 cm, auto]
\node (P) {$C_{k}(\Lambda)$};
\node (B) [right of=P] {$C_{k}(\hat{\Lambda})$};
\node (A) [below of=P, node distance=2 cm] {$S_{K_{\hat{S}}}$};
\node (C) [below of=B, node distance=2 cm] {$\hat{S}$};
\node (D) [below of=A, node distance=2 cm] {$\bar{\mathbb{C}}$};
\node (E) [below of=C, node distance=2 cm] {$\bar{\mathbb{C}}$};
\node (F) [below of=E, node distance=2 cm] {$\bar{\mathbb{C}}$};
\draw[->] (E) to node[swap] {\tiny $\hat{G}/\hat{N}$} (F);
\draw[->, bend left] (C) to node {\tiny $\hat{G}$} (F);
\draw[->, bend left=40] (B) to node {\tiny ${Q}_{\hat{S}, \hat{G}}$} (F);    
\draw[->] (P) to node {\tiny $f$} (B);
\draw[->] (A) to node {\tiny $N_{K_{\hat{S}}}=g^{-1}\hat{N}g$} (D);
\draw[->] (C) to node[swap]  {\tiny $\hat{N}$} (E);
\draw[->] (D) to node {\tiny $\hat{f}$} (E);
\draw[->] (P) to node   {\tiny $K_{\hat{S}}=f^{-1}K_{\hat{S}}f$} (A);
\draw[->] (A) to node  {\tiny $g$} (C);
\draw[->] (B) to node [swap] {\tiny $K_{\hat{S}}$} (C);
\draw[->, bend left] (B) to node {\tiny $H$} (E);
\draw[->, bend right] (P) to node [swap] {\tiny $H$} (D);
\end{tikzpicture}
$$
}

We consider the group of homeomorphisms $g^{-1}\hat{G}g \leqslant \mbox{Hom}^+(S_{K_{\hat{S}}})$. As $Q_{\hat{S}, \hat{G}}^*=Q_{S, G}^*$, we have that $S/G$ and $\hat{S}/\hat{G}$ are isomorphic as topological orbifolds. It follows that $S/G$ and $S_{{K}_{\hat{S}}}/(g^{-1}\hat{G}g)$ are isomorphic as topological orbifolds too. Observe that $N_{K_{\hat{S}}} =g^{-1}\hat{N}g \trianglelefteq g^{-1}\hat{G}g$ and that $(g^{-1}\hat{G}g)/N_{K_{\hat{S}}}=\hat{f}^{-1}(\hat{G}/\hat{N})\hat{f}$. Besides, the group $g^{-1}\hat{G}g$ lifts to a group $\Q \leqslant \mbox{Hom}^+(C_k(q_4, \ldots, q_{n+1}))$ such that $H, K \trianglelefteq \mathcal{Q}$ and $\Q/K \cong g^{-1}\hat{G}g.$ Now, the fact that $S/G$ and $S_{{K}_{\hat{S}}}/g^{-1}\hat{G}g$ are isomorphic as topological orbifolds implies that the action by conjugation of $\Q$ agrees with the one of $Q_{S, G}^*$, showing that $K_{\hat{S}} \in \mathscr{C}_p(Q_{S, G})$ as desired.
\end{proof}

Consider a triple $(S, N, G)$ as above.
We denote by $${\mathcal N}_{Q_{S,G}} =\{\Phi \in {\rm Aut}_{g}(H) :  \Phi Q^{*}_{S,G} \Phi^{-1}=Q^{*}_{S,G}\} \leqslant {\rm Aut}_{g}(H)$$ the normalizer of $Q^{*}_{S,G}$ in ${\rm Aut}_{g}(H) $. We observe that $\mathscr{C}_p(Q_{S,G})$ is ${\mathcal N}_{Q_{S,G}}$-invariant.

If $(\hat{S},\hat{N},\hat{G})$ is another triple as above, such that $\hat{G}/\hat{N}$ induces $Q^{*}_{S,G}$, then 
there exists some $K \in \mathscr{C}_p(Q_{S,G})$ such that there is an orientation-preserving homeomorphism $h:S_{K}=C_{k}(\Lambda)/K \to \hat{S}$ such that $h N_{K} h^{-1}=\hat{N}$ and $h G_{K} h^{-1}=\hat{G}$.
So, following similar arguments as in the proof of Proposition \ref{propo5}, we obtain the following result.

\begin{theo}\label{teorema3}
Let $K_{1},K_{2} \in \mathscr{C}_p(Q_{S,G})$. The triples $(S_{K_{1}},N_{K_{1}},G_{K_{1}})$ and $(S_{K_{2}},N_{K_{2}},G_{K_{2}})$ are topologically equivalent if and only if there exists $\Phi \in {\mathcal N}_{Q_{S,G}}$ such that $\Phi(K_{1})=K_{2}$.
\end{theo}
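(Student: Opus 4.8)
The plan is to mirror the structure of the proof of the preceding theorem (the one characterizing when two pairs $(S_{K_1},N_{K_1})$ and $(S_{K_2},N_{K_2})$ are topologically equivalent), upgrading it to keep track of the extra automorphism groups $G_{K_i}$. Throughout I fix $\Lambda=(q_4,\ldots,q_{n+1})\in\Omega_n$, write $C=C_k(\Lambda)$, and recall that for each $K\in\mathscr{C}_k(Q_{S,G})$ there is a group $\Q_K\leqslant\mathrm{Hom}^+_H(C)$ containing $H$ and $K$ as normal subgroups with $\rho_{\Q_K}(\Q_K)=\Q_K^*=Q^*_{S,G}$, and that $G_{K}=\Q_K/K$, $N_K=H/K$.

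For the backward direction, suppose $\Phi\in{\mathcal N}_{Q_{S,G}}$ with $\Phi(K_1)=K_2$. By the definition of $\mathrm{Aut}_g(H)$ there is $f=f_\Phi\in\mathrm{Hom}^+_H(C)$ with $\Phi_f=\Phi$; since $f H f^{-1}=H$ and $f K_1 f^{-1}=\Phi(K_1)=K_2$, $f$ descends to an orientation-preserving homeomorphism $g:S_{K_1}\to S_{K_2}$ with $g N_{K_1} g^{-1}=N_{K_2}$. It remains to check $g G_{K_1} g^{-1}=G_{K_2}$. Because $\Phi$ normalizes $Q^*_{S,G}$, the group $f\Q_{K_1}f^{-1}$ is a finite group of orientation-preserving homeomorphisms of $C$ normalizing $H$, containing $K_2$ as a normal subgroup, and inducing $\rho(f\Q_{K_1}f^{-1})=\Phi\,Q^*_{S,G}\,\Phi^{-1}=Q^*_{S,G}$; hence $(f\Q_{K_1}f^{-1})/K_2$ is a group of homeomorphisms of $S_{K_2}$ containing $N_{K_2}$ normally and inducing $Q^*_{S,G}$ on the cone points, so it realizes the same topological triple data as $G_{K_2}$ and we may take it as our chosen $G_{K_2}$ (the construction of $G_K$ from $K$ and $Q^*_{S,G}$ is only well-defined up to this kind of choice, exactly as in Proposition \ref{propo5}). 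Therefore $g$ conjugates $G_{K_1}$ to $G_{K_2}$, giving the topological equivalence of the triples.

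For the forward direction, suppose $h:S_{K_1}\to S_{K_2}$ is an orientation-preserving homeomorphism with $hN_{K_1}h^{-1}=N_{K_2}$ and $hG_{K_1}h^{-1}=G_{K_2}$. Since $h$ conjugates $N_{K_1}$ to $N_{K_2}$, it induces an orientation-preserving homeomorphism $\bar h$ of $\bar{\mathbb C}=S_{K_i}/N_{K_i}$ fixing the branch set $\mathscr{B}_\Lambda$ setwise; because the covering $C\to\bar{\mathbb C}$ is characteristic, $\bar h$ lifts to $f\in\mathrm{Hom}^+_H(C)$ with $fHf^{-1}=H$ and $fK_1f^{-1}=K_2$, exactly as in the proof of the previous theorem. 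Set $\Phi:=\Phi_f\in\mathrm{Aut}_g(H)$, so $\Phi(K_1)=K_2$. The remaining point is to show $\Phi\in{\mathcal N}_{Q_{S,G}}$. For this, observe that the permutation of the cone points induced by $\bar h$ conjugates the permutation group induced by $G_{K_1}/N_{K_1}$ to that induced by $G_{K_2}/N_{K_2}$; but by construction both of these permutation groups equal the one associated to $Q^*_{S,G}$, and via the isomorphism $\mathrm{Aut}_g(H)\cong\mathbf{S}_{n+1}$ of Proposition \ref{tp} the element $\Phi$ corresponds precisely to that cone-point permutation. Hence $\Phi\,Q^*_{S,G}\,\Phi^{-1}=Q^*_{S,G}$, i.e. $\Phi\in{\mathcal N}_{Q_{S,G}}$, completing the proof.

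The main obstacle I anticipate is the bookkeeping in the forward direction: one must carefully justify that conjugation by $h$ translates into conjugation of the \emph{induced permutational actions} $Q^*_{S,G}$ (not merely the groups $G_{K_i}$ themselves, which need not be isomorphic to any fixed group and whose lifts $\Q_{K_i}$ are only determined up to the choices made in Proposition \ref{propo5}). The clean way around this is to work entirely at the level of $\mathrm{Aut}_g(H)\cong\mathbf{S}_{n+1}$ and of topological orbifold data for $S_{K_i}/G_{K_i}\cong C/\Q_{K_i}$, using that the isomorphism of Proposition \ref{tp} sends $\Phi_f$ to the cone-point permutation of $\eta(f)$; this reduces the compatibility claim to the observation that a homeomorphism realizing the triple equivalence necessarily descends to the base orbifold and matches cone points, which is already implicit in the construction of $\mathscr{C}_k(Q_{S,G})$ and was handled in Proposition \ref{propo5}.
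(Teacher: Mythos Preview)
Your proposal is correct and follows essentially the same approach the paper itself indicates: the paper does not spell out a proof but only says that the result is obtained ``following similar arguments as in the proof of Proposition~\ref{propo5}'', and your argument is precisely the natural upgrade of the proof of the preceding theorem on pairs, keeping track of the extra group $G_K$ via the identification $\mathrm{Aut}_g(H)\cong\mathbf{S}_{n+1}$ of Proposition~\ref{tp}. Your handling of the subtlety that $G_K$ is only well-defined up to the choice of $\Q_K$ is appropriate and matches how the paper treats these objects.
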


Analogously to Corollary \ref{felix}, we have the following result.

\begin{coro}\label{coca7}
Let  $(S,N,G)$  be a triple as above. 
Then, the number of pairwise topologically inequivalent triples $(\hat{S},\hat{N},\hat{G})$, where $(\hat{S},\hat{N})$ is a ${\mathbb Z}_{k}^{m}$-action of signature  $(0;k^{n+1})$ and $\hat{N} \triangleleft \hat{G} \leqslant {\rm Aut}(\hat{S})$ is such that $\hat{G}/\hat{N}$ induces $Q^{*}_{S,G}$, is equal to  
the cardinality of the quotient set $\mathscr{C}_k(Q_{S,G})/{\mathcal N}_{Q_{S,G}}$.
\end{coro}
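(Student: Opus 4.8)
The plan is to reproduce, \emph{mutatis mutandis}, the argument behind Corollary \ref{felix}, now at the level of triples: I will build an explicit bijection between the quotient set $\mathscr{C}_k(Q_{S,G})/{\mathcal N}_{Q_{S,G}}$ and the set $\mathcal{T}$ of topological equivalence classes of triples $(\hat S,\hat N,\hat G)$ as in the statement (with $\hat G/\hat N$ inducing $Q^{*}_{S,G}$), and then read off the equality of cardinalities. Since $H\cong\mathbb{Z}_k^n$ is finite, both sides are finite sets of subgroups of $H$ and of equivalence classes, so no set-theoretic subtlety arises.

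First I would define the map $\Psi:\mathscr{C}_k(Q_{S,G})/{\mathcal N}_{Q_{S,G}}\to\mathcal{T}$ by $\Psi([K])=[(S_K,N_K,G_K)]$, where $(S_K,N_K,G_K)$ is the triple attached to $K\in\mathscr{C}_k(Q_{S,G})$ in \S\ref{state2}: namely $S_K=C_k(\Lambda)/K$, $N_K=H/K$ and $G_K=\Q_K/K$ for a lift $\Q_K\leqslant\mathrm{Hom}^+_H(C_k(\Lambda))$ containing $H$ and $K$ as normal subgroups with $\Q_K^{*}=Q_{S,G}^{*}$; by construction this triple is of the required form and $G_K/N_K\cong\Q_K/H$ induces $Q_{S,G}^{*}$. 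Two things must be checked here. One is that the class $[(S_K,N_K,G_K)]$ is independent of the auxiliary choice of $\Q_K$; this is the one point deserving care, and I expect it to be the main (mild) obstacle, to be handled by observing that two such lifts have the same image in $\mathrm{Aut}_g(H)$ and hence induce triples that are interchanged by an orientation-preserving self-homeomorphism normalizing both $N_K$ and $G_K$. The other is that $\Psi$ is constant on ${\mathcal N}_{Q_{S,G}}$-orbits: this uses that $\mathscr{C}_k(Q_{S,G})$ is ${\mathcal N}_{Q_{S,G}}$-invariant, as noted above, together with the ``if'' direction of the Theorem preceding the present statement, which gives that $(S_{K_1},N_{K_1},G_{K_1})$ and $(S_{K_2},N_{K_2},G_{K_2})$ are topologically equivalent whenever $\Phi(K_1)=K_2$ for some $\Phi\in{\mathcal N}_{Q_{S,G}}$.

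It remains to show that $\Psi$ is a bijection. Surjectivity is precisely Proposition \ref{propo5}: any triple $(\hat S,\hat N,\hat G)$ of the stated form with $Q^{*}_{\hat S,\hat G}=Q^{*}_{S,G}$ is topologically equivalent to $(S_K,N_K,G_K)$ for some $K\in\mathscr{C}_k(Q_{S,G})$, hence lies in the image of $\Psi$. Injectivity is the ``only if'' direction of the same Theorem: if $(S_{K_1},N_{K_1},G_{K_1})$ and $(S_{K_2},N_{K_2},G_{K_2})$ are topologically equivalent, then $K_1$ and $K_2$ lie in a common ${\mathcal N}_{Q_{S,G}}$-orbit, i.e.\ $[K_1]=[K_2]$ in $\mathscr{C}_k(Q_{S,G})/{\mathcal N}_{Q_{S,G}}$. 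Therefore $\Psi$ is a bijection and $\#\bigl(\mathscr{C}_k(Q_{S,G})/{\mathcal N}_{Q_{S,G}}\bigr)=\#\mathcal{T}$, which is the claimed statement. Beyond the well-definedness point flagged above, everything is a direct transcription of Proposition \ref{propo5} and of the Theorem it precedes, so I anticipate no new difficulties.
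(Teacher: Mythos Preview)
Your proposal is correct and follows exactly the route the paper intends: the corollary is stated ``analogously to Corollary~\ref{felix}'' and is meant to be read off directly from Proposition~\ref{propo5} (surjectivity) together with the preceding Theorem on $K_1,K_2\in\mathscr{C}_k(Q_{S,G})$ (well-definedness and injectivity), which is precisely the bijection you construct. Your explicit attention to the independence of the auxiliary lift $\Q_K$ is a detail the paper leaves implicit, but your handling of it via $\Q_K^{*}=Q_{S,G}^{*}$ is the right one.
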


\begin{example}[Case $m=1$ and $G$ a dihedral group]
Let us consider those pairs $(S,G)$, where $G=\mathbf{D}_{k}=\langle r,s: r^{k}=s^{2}=(rs)^{2}=1\rangle$, and $S/G$ has signature $(0;2,2,k,\stackrel{n}{\ldots},k)$, where $k \geqslant 3$ is odd, and $n \geqslant 2$.
As a consequence of the Riemann-Hurwitz formula (and the fact that $k$ is odd), $S/\langle r \rangle$ has signature $(0;k,\stackrel{2n}{\ldots},k)$, and it admits a conformal involution permuting the $2n$ cone points in pairs. This involution induces the order two cyclic subgroup ${\mathcal Q}^{*}_{S,G}=\langle \Psi \rangle \leqslant {\rm Aut}_{g}(H)$, where $\Psi(a_{2j-1})=a_{2j}$, $\Psi(a_{2j})=a_{2j-1}$, for $j=1,\ldots,n$.
If we set
$$\Delta=\{(l_{3},l_{5},\ldots,l_{2n-1}): l_{3},l_{5},\ldots,l_{2n-1} \in \{1,\ldots,k-1\}, \;  \gcd(k,l_{3})=\cdots=\gcd(k,l_{2n-1})=1  \},$$
then $\mathscr{C}_{k}(Q_{S,G})$ consists of the groups 
$$K(l_{3},l_{5},\ldots,l_{2n-1})=\langle a_{1}a_{2},a_{1}^{l_{3}}a_{3}^{-1}, a_{1}^{l_{3}}a_{4}, \ldots, a_{1}^{l_{2n-3}}a_{2n-3}^{-1}, a_{1}^{l_{2n-3}}a_{2n-2}, a_{1}^{l_{2n-1}}a_{2n-1}^{-1}\rangle, \; (l_{3},l_{5},\ldots,l_{2n-1}) \in \Delta.$$ 

The Riemann surface $S_{K(l_{3},l_{5},\ldots,l_{2n-1})}$ is algebraically described by one of the form
$$y^{k}=F(x)=x^{k-1}(x-1)^{l_{3}}(x-\lambda_{1})^{k-l_{3}} \prod_{j=2}^{n}(x-\lambda_{2j-1})^{l_{2j-1}}\left(x-\tfrac{\lambda_{1}}{\lambda_{2j-1}} \right)^{k-l_{2j-1}},$$
where $(\lambda_{1},\lambda_{2}=\lambda_{1}/\lambda_{3},\lambda_{3},\lambda_{4}=\lambda_{1}/\lambda_{5},\lambda_{5},\ldots, \lambda_{2n-2}=\lambda_{1}/\lambda_{2n-1},\lambda_{2n-1}) \in \Omega_{2n-1}$, 
and the dihedral group is generated by the automorphisms
$$r(x,y)=(x,\omega_{k}y) \mbox{ and } s(x,y)=\left(\frac{\lambda_{1}}{x}, Q(x)y^{k-1}\right), \mbox{ where }Q(x)^{k}=\dfrac{F(\frac{\lambda_{1}}{x})}{F(x)^{k-1}}.$$

Also, 
${\mathcal N}_{Q_{S,G}}=\langle \Psi_{1}=\Phi_{1},\Psi_{2},\Psi_{3} \rangle$, where:
\begin{enumerate}
\item $\Psi_{1}$ permutes $a_{1}$ with $a_{2}$, and fixes all the others $a_{j}'$s.
\item $\Psi_{2}$ permutes $a_{1}$ with $a_{3}$, permutes $a_{2}$ with $a_{4}$, and  fixes all the others $a_{j}'$s.
\item  $\Psi_{3}(a_{2n-1})=a_{1}$, $\Psi_{3}(a_{2n})=a_{2}$, $\Psi_{3}(a_{2j-1})=a_{2j+1}$ and $\Psi_{3}(a_{2j})=a_{2j+2}$, $j=1,\ldots,n-1$.
\end{enumerate}

If $a,b \in \{1,\ldots,k-1\}$ are such that $\gcd(k,a)=\gcd(k,b)=1$, then we denote by ``$-a$" the element $k-a$, by ``$a^{-1}$" the element in $\{1,\ldots,k-1\}$ such that $aa^{-1} \equiv 1 \mbox{ mod }k$, and by ``$ab$" the element in $\{1,\ldots,k-1\}$ which is the $k$-residue of the product of $a$ and $b$. 
The elements $\Psi_{1}, \Psi_{2}$ and $\Psi_{3}$, induce the following bijections of $\Delta$:
$$A_{1}(l_{3},l_{5},\ldots,l_{2n-1})=(-l_{3},-l_{5},\ldots,-l_{2n-1}), \; 
A_{2}(l_{3},l_{5},\ldots,l_{2n-1})=(-l_{3}^{-1},-l_{3}^{-1}l_{5},\ldots,-l_{3}^{-1}l_{2n-1}),$$
$$A_{3}(l_{3},l_{5},\ldots,l_{2n-1})=(l_{2n-1}^{-1},l_{2n-1}^{-1}l_{3},l_{2n-1}^{-1}l_{5},\ldots,l_{2n-1}^{-1}l_{2n-3}).$$

As a consequence of Corollary \ref{coca7}, we obtain that the number of topologically different actions of $\mathbf{D}_{k}$, $k \geqslant 3$, with quotient signature $(0;2,2,k,\stackrel{n}{\ldots},k)$ is equal to the cardinality of the quotient set $\Delta/\langle A_{1},A_{2},A_{3}\rangle$.
\end{example}

\section{The case $m=2$ and $k=p$ prime}
In this section we restrict to the case of ${\mathbb Z}_{p}^{2}$-actions of signature $(0;p^{n+1})$, where $p \geqslant 2$ is prime, $n \geqslant 3$ and $(n-1)(p-1)>2$.

\subsection{Description of ${\mathcal F}(p,n,2)$}
In this case, as a consequence of Theorem \ref{clasificageneral}, we can describe the elements of ${\mathcal F}(p,n)={\mathcal F}(p,n,2)$.

\begin{theo}\label{max}
If $K \in {\mathcal F}(p,n)$ then one of the following statements holds. 
\begin{enumerate}
\item There are integers $r_{3},s_{3}, \ldots, r_n, s_n \in \{0,1,\ldots,{p}-1\}$ satisfying that $(r_{j},s_{j}) \neq (0,0)$ for each $j$, and  that 
$$(1+r_{3}+\cdots+r_{n},1+s_{3}+\cdots+s_{n}) \nequiv (0,0) \mbox{ mod }p$$ in such a way that $K=\langle a_{1}^{r_{3}}a_{2}^{s_{3}}a_{3}^{-1},\ldots, a_{1}^{r_{n}}a_{2}^{s_{n}}a_{n}^{-1}\rangle.$

\item There is an integer $2 \leqslant t \leqslant n-1$, there are integers $l_{2},\ldots,l_{t} \in \{1,\ldots,{p}-1\}$, and there are integers  $r_{t+2},s_{t+2}, \ldots, r_n, s_n \in \{0,1,\ldots,p-1\}$ satisfying that $(r_{j},s_{j}) \neq (0,0)$ for each $j$, and  that $$(1+l_{2}+\cdots+l_{t}+r_{t+2}+\cdots+r_{n},1+s_{t+2}+\cdots+s_{n}) \nequiv (0,0) \mbox{ mod } p$$ in such a way that $K=\langle a_{1}^{l_{2}}a_{2}^{-1},\ldots,a_{1}^{l_{t}}a_{t}^{-1},a_{1}^{r_{t+2}}a_{t+1}^{s_{t+2}}a_{t+2}^{-1},\ldots, a_{1}^{r_{n}}a_{t+1}^{s_{n}}a_{n}^{-1}\rangle.$
\end{enumerate}
\end{theo}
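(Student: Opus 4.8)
The plan is to run the same argument as in the general Theorem classifying $\mathcal F(p,n,m)$, but now keeping track of the extra combinatorial rigidity that appears when $m=2$. As in that proof, a group $K\in\mathcal F(p,n)=\mathcal F(p,n,2)$ is the kernel of an epimorphism $\theta:H\to\mathbb Z_p^2$ such that $\theta(a_j)$ has order $p$ (equivalently $\theta(a_j)\neq 0$) for every $j=1,\dots,n+1$. The images $\theta(a_1),\dots,\theta(a_n),\theta(a_{n+1})$ are $n+1$ nonzero vectors in $\mathbb Z_p^2$ that sum to zero (because $a_1\cdots a_{n+1}=1$), and since $\theta$ is surjective they span $\mathbb Z_p^2$. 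Using $\mathrm{Aut}_g(H)\cong\mathbf S_{n+1}$ (Proposition \ref{tp}), we are free to permute the indices $1,\dots,n+1$ at will, and this is what splits the classification into the two listed cases.

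The dichotomy comes from the following elementary fact about a spanning set of nonzero vectors $v_1=\theta(a_1),\dots,v_{n+1}=\theta(a_{n+1})$ in $\mathbb Z_p^2$: either there exist two of them that are $\mathbb Z_p$-linearly independent, or all of them lie on a common line through the origin. The first alternative is impossible in the second case only if, after relabelling, $v_1$ and $v_2$ are independent; then setting $\phi_1:=\theta(a_1)$, $\phi_2:=\theta(a_2)$ as a basis, one writes $\theta(a_j)=\phi_1^{r_j}\phi_2^{s_j}$ for $j\geq 3$ with $(r_j,s_j)\neq(0,0)$ (nonvanishing of $\theta(a_j)$), and the relation $\sum_j\theta(a_j)=0$ together with $\theta(a_{n+1})\neq 0$ gives $(1+r_3+\cdots+r_n,\,1+s_3+\cdots+s_n)\nequiv(0,0)\bmod p$; eliminating the generator $a_1^{r_{n+1}}a_2^{s_{n+1}}a_{n+1}^{-1}$ via $a_{n+1}=(a_1\cdots a_n)^{-1}$ exactly as in the general theorem yields the $n-m=n-2$ stated generators of (1). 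In the second alternative all $v_j$ lie on one line $\ell$; but the $v_j$ must still span $\mathbb Z_p^2$, which is absurd unless — here is the point — not all of them are on $\ell$. So the correct formulation is: after relabelling, $\theta(a_1),\dots,\theta(a_t)$ all lie on the line spanned by $\theta(a_1)$ (with $2\leq t\leq n-1$ chosen maximal), while $\theta(a_{t+1})$ is independent of $\theta(a_1)$. Rescaling the basis so that $\theta(a_1)$ is the first basis vector, one gets $\theta(a_i)=\theta(a_1)^{l_i}$ with $l_i\in\{1,\dots,p-1\}$ for $2\leq i\leq t$ (note $l_i\neq 0$ since $\theta(a_i)\neq 0$, and $s$-component $=0$ by choice of $t$), while for $j\geq t+2$ we expand $\theta(a_j)=\theta(a_1)^{r_j}\theta(a_{t+1})^{s_j}$ with $(r_j,s_j)\neq(0,0)$. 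The trivial-product relation, after isolating $\theta(a_{t+1})$'s contribution and the $a_{n+1}$ generator, gives the congruence $(1+l_2+\cdots+l_t+r_{t+2}+\cdots+r_n,\,1+s_{t+2}+\cdots+s_n)\nequiv(0,0)\bmod p$ and the generating set of (2); the count is again $n-2$ generators, consistent with $H/K\cong\mathbb Z_p^2$.

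The routine calculations are the two eliminations of the $a_{n+1}$-generator, which are literally the computation carried out in the proof of the general theorem, so I would just cite that manipulation. The genuinely new step — and the main thing to get right — is the case split: verifying that "maximal $t$ with $\theta(a_1),\dots,\theta(a_t)$ colinear" together with surjectivity of $\theta$ forces $t\leq n-1$ and forces $\theta(a_{t+1})$ to be off the line, and then checking that the two cases are genuinely exhaustive (case (1) is "there is no nontrivial colinear block containing $a_1$ of size $\geq 2$", i.e. $t=1$; case (2) is $t\geq 2$). I expect the only subtlety is bookkeeping the relabellings so that the stated normal forms match exactly, including confirming that in case (1) one may always arrange the two independent vectors to be $\theta(a_1),\theta(a_2)$, and in case (2) that the colinear block is $\{a_1,\dots,a_t\}$ and the first transverse vector is $\theta(a_{t+1})$; all of this is absorbed into the action of $\mathbf S_{n+1}=\mathrm{Aut}_g(H)$, so no loss of generality is incurred.
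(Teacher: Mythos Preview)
Your overall structure---realize $K$ as $\ker\theta$ for an epimorphism $\theta:H\to\mathbb Z_p^2$ with each $\theta(a_j)\neq 0$, pick a basis, and read off generators---is right and matches the paper. But there is a genuine gap: you invoke the action of $\mathrm{Aut}_g(H)\cong\mathbf S_{n+1}$ to relabel indices (``we are free to permute'', ``after relabelling'', ``all of this is absorbed into the action of $\mathbf S_{n+1}$''). The theorem as stated has \emph{no} ``there exists $\Phi\in\mathrm{Aut}_g(H)$'' clause; it asserts that the given $K$ itself has one of the two displayed forms. If you permute, you only prove the weaker statement that some $\mathbf S_{n+1}$-translate of $K$ has that form, which is the content of the earlier general theorem on $\mathcal F(p,n,m)$, not this one.

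The fix is exactly what the paper does, and it removes the need for any relabelling. Set $\phi_1:=\theta(a_1)$ and let $t$ be the largest integer with $\theta(a_1),\dots,\theta(a_t)\in\langle\phi_1\rangle$ \emph{in the given ordering}; surjectivity of $\theta$ forces such a first escape index to exist, and the relation $a_1\cdots a_{n+1}=1$ forces $t\le n-1$. If $t=1$ then $\phi_2:=\theta(a_2)$ is independent of $\phi_1$ and you land in case~(1); if $t\ge 2$ then $\theta(a_j)=\phi_1^{l_j}$ for $2\le j\le t$ with $l_j\neq 0$, set $\phi_2:=\theta(a_{t+1})$, and you land in case~(2). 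No permutation is used, and the case split is not ``some pair is independent vs.\ all colinear'' (the latter is impossible by surjectivity, as you noticed) but simply ``$\theta(a_2)\notin\langle\theta(a_1)\rangle$'' versus ``$\theta(a_2)\in\langle\theta(a_1)\rangle$''. Your description of $t$ as the size of a \emph{maximal colinear block} is also off: later $\theta(a_j)$ with $j>t+1$ may well fall back on the line $\langle\phi_1\rangle$ (i.e.\ $s_j=0$), and that is fine---$t$ records only the length of the initial colinear segment.
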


\begin{proof} The proof follows the same ideas employed in the proof of Theorem \ref{clasificageneral}.
\end{proof}


We say that $K$ is of type (1) or (2) according to the enumeration in the theorem above.

\subsection{Algebraic descriptions}\label{Sec:fibrado}
We recall that, as observed in  \S\ref{fibradoproducto}, each ${\mathbb Z}_{p}^{m}$-action can be described as a fiber product. We proceed to make such a description explicit for the case $m=2$ and $k=p$ prime, and for each possible group $K \in {\mathcal F}(p,n)$. If $\Lambda=(q_{4},\ldots,q_{n+1}) \in \Omega_{n}$, then we consider the ${\mathbb Z}_{p}^{2}$-action 
$$(S_{K}=C_{k}(\Lambda)/K,N_{K}=H/K=\langle \phi_{1},\phi_{2}\rangle),$$
where $\phi_{1}$ and $\phi_{2}$ are as defined in the  proof of Theorem \ref{max}.

For $j=1, 2$, set $S_{j}:=S_{K}/\langle \phi_{j} \rangle$ and denote by $\pi_{j}:S_{j} \to \bar{\mathbb C}$ the branched regular covering map with deck group $\langle \tau_{j}\rangle=N_{K}/\langle \phi_{j} \rangle \cong {\mathbb Z}_{p}$. The branch locus of $\pi_{1}$ is given by the set $\{q_{t+1},\ldots,q_{n+1}\}$ and therefore an algebraic description for $S_{1}$ is given by
$$S_{1}: y_{1}^{{p}}=(x-q_{t+1})\prod_{j=t+2}^{n+1}(x-q_{j})^{s_{j}},$$where $s_{n+1} \equiv -(1+s_{t+2}+\cdots+s_{n}) \mbox{ mod }{p}.$ In this model $\pi_{1}(x,y_{1})=x$. Similarly, the branch locus of $\pi_{2}$ is given by the set $\{q_{1},\ldots,q_{t},q_{t+2},\ldots,q_{n+1}\}$ and therefore an algebraic description for $S_{2}$ is given as follows.  If $K$ is of type (1) then $S_2$ is given by
$$S_2 : y_{2}^{p}=\prod_{j=3}^{n+1}(x-q_{j})^{r_{j}}$$where $r_{n+1} \equiv -(1+r_{3}+\cdots+r_{n}) \mbox{ mod }p.$ If $K$ is of type (2) then
$$S_{2}:y_{2}^{p}=\prod_{i=2}^{t}(x-q_{i})^{l_{i}}\prod_{j=t+2}^{n+1}(x-q_{j})^{r_{j}}$$where $r_{n+1} \equiv -(1+l_{2}+\cdots+l_{t}+r_{t+2}+\cdots+r_{n}) \mbox{ mod }p.$ In this model $\pi_{2}(x,y_{2})=x$.

All the above coupled with the discussion in \S\ref{fibradoproducto} is the proof of the following result.

\begin{prop}\label{pp5}
Let $(S_K, N_K)$ be a ${\mathbb Z}_{p}^{2}$-action of signature $(0;p^{n+1})$. Then, with the same notations as in Theorem \ref{max}, an algebraic description of $S_K$ is given as follows.
\begin{enumerate}
\item If $K$ is of type (1) then $$S_{K}: \left\{ \begin{array}{l}
y_{1}^{p}=x\prod_{j=3}^{n+1}(x-q_{j})^{s_{j}}\\
y_{2}^{p}=\prod_{j=3}^{n+1}(x-q_{j})^{r_{j}}\\
\end{array}
\right.
$$where $s_{n+1} \equiv -(1+s_{t+2}+\cdots+s_{n}) \mbox{ mod }{p}$ and $r_{n+1} \equiv -(1+r_{3}+\cdots+r_{n}) \mbox{ mod }p.$

\item If $K$ is of type (2) then $$S_{K}: \left\{ \begin{array}{l}
y_{1}^{p}=(x-q_{t+1})\prod_{j=t+2}^{n+1}(x-q_{j})^{s_{j}}\\
y_{2}^{p}=\prod_{i=2}^{t}(x-q_{i})^{l_i}\prod_{j=t+2}^{n+1}(x-q_{j})^{r_{j}}\\
\end{array}
\right.
$$where $s_{n+1} \equiv -(1+s_{t+2}+\cdots+s_{n}) \mbox{ mod }{p}$ and $r_{n+1} \equiv -(1+l_{2}+\cdots+l_{t}+r_{t+2}+\cdots+r_{n}) \mbox{ mod }p.$
\end{enumerate}

In both cases (1) and (2), the group $N_{K}$ corresponds to $\langle \phi_{1}(x,y_{1},y_{2})=(x,y_{1},\omega_{p} y_{2}),\phi_{2}(x,y_{1},y_{2})=(x,\omega_{p} y_{1},y_{2})\rangle$.
\end{prop}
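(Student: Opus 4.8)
The plan is to unwind the fiber-product description established in \S\ref{fibradoproducto} together with the explicit generators of $K\in{\mathcal F}(p,n)$ coming from Theorem \ref{max}. First I would recall that, by the discussion in \S\ref{fibradoproducto} applied to $(S_K,N_K)$ with $N_K=\langle\phi_1,\phi_2\rangle$, we have $S_K\cong(S_1,\pi_1)\times_{\bar{\mathbb C}}(S_2,\pi_2)$, where $S_j=S_K/\langle\phi_j\rangle$ carries the cyclic action $\tau_j$ induced by the other generator, and $\pi_j\colon S_j\to\bar{\mathbb C}$ is the degree-$p$ regular cover branched over the appropriate subset of $\mathscr{B}_\Lambda$. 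The heart of the argument is therefore to identify the two cyclic $p$-gonal curves $S_1$ and $S_2$ and their branch data explicitly; this is exactly what was carried out in the paragraphs immediately preceding the statement. I would reproduce that identification: the quotient $S_K/\langle\phi_1\rangle$ is the intermediate cover of $C_k(\Lambda)\to\bar{\mathbb C}$ fixed by the subgroup $\langle K,a_?\rangle$ whose image in $H/K$ is $\langle\phi_1\rangle$, and reading off which $a_j$ act with nontrivial rotation number on it yields that its branch locus is $\{q_{t+1},\dots,q_{n+1}\}$ with local monodromies $1,s_{t+2},\dots,s_n$ and $s_{n+1}\equiv-(1+s_{t+2}+\cdots+s_n)\bmod p$; symmetrically for $S_2$, with the branch locus and exponents depending on whether $K$ is of type (1) or (2).

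Next I would assemble the fiber product concretely. Writing $S_1\colon y_1^p=(x-q_{t+1})\prod_{j=t+2}^{n+1}(x-q_j)^{s_j}$ and $S_2\colon y_2^p=\prod_{i=2}^{t}(x-q_i)^{l_i}\prod_{j=t+2}^{n+1}(x-q_j)^{r_j}$ (type (2); the type (1) case replacing the first equation appropriately since there $t=1$, so the factor $(x-q_{t+1})=(x-q_2)=x$ and the second product runs from $j=3$), the fiber product $(S_1,\pi_1)\times_{\bar{\mathbb C}}(S_2,\pi_2)$ is by definition the closure in $(x,y_1,y_2)$-space of $\{(x,y_1,y_2):\pi_1(x,y_1)=\pi_2(x,y_2)\}$, which since $\pi_j(x,y_j)=x$ is precisely the system of two equations displayed in Proposition \ref{pp5}. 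One must check this fiber product is irreducible and nonsingular of the correct genus, but this follows from the general position of the $q_j$ and the coprimality-to-$p$ of the exponents, exactly as for generalized Fermat curves; alternatively, irreducibility is automatic from the fact that $S_K$ itself is connected and surjects onto it. Finally I would verify the displayed formulas for $\phi_1,\phi_2$: the covering $S_K\to S_1$ has deck group $\langle\phi_1\rangle$ and in the fiber-product coordinates this deck transformation must fix $x$ and $y_1$ (which are pulled back from $S_1$) while multiplying $y_2$ by a primitive $p$-th root of unity, so $\phi_1(x,y_1,y_2)=(x,y_1,\omega_p y_2)$; symmetrically $\phi_2(x,y_1,y_2)=(x,\omega_p y_1,y_2)$. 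That $\langle\phi_1,\phi_2\rangle$ is all of $N_K$ is clear since these generate a group isomorphic to ${\mathbb Z}_p^2$ acting with the right quotient.

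The main obstacle I anticipate is bookkeeping rather than conceptual: one has to be careful that the exponents $s_j$ appearing in the equation for $S_1$ are genuinely the rotation numbers of the $a_j$'s on the quotient $S_K/\langle\phi_1\rangle$ and that the ``dangling'' index $n+1$ gets the residue forced by the product relation $a_1\cdots a_{n+1}=1$ pushed through $\theta$ and then through the quotient by $\langle\phi_1\rangle$. This requires keeping straight which generators $a_1,\dots,a_t$ (resp. $a_{t+1},a_{t+2},\dots$) map into $\langle\phi_1\rangle$ versus have a nontrivial $\phi_2$-component, and matching that partition against the two types in Theorem \ref{max}; the type (1) case is the degenerate $t=1$ instance and needs to be stated separately so that the first equation reads $y_1^p=x\prod_{j=3}^{n+1}(x-q_j)^{s_j}$. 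Once the branch data are correctly extracted, the rest is a direct translation through the definition of fiber product and costs essentially nothing.
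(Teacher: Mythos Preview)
Your proposal is correct and follows essentially the same approach as the paper: the paper's proof consists precisely of the discussion in \S\ref{Sec:fibrado} immediately preceding the statement (identifying the branch data of $S_1=S_K/\langle\phi_1\rangle$ and $S_2=S_K/\langle\phi_2\rangle$ from the form of $K$ in Theorem \ref{max}) together with the fiber-product description of \S\ref{fibradoproducto}, which is exactly what you outline. Your added remarks on irreducibility and on verifying the explicit form of $\phi_1,\phi_2$ are sound and slightly more detailed than what the paper records.
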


\subsection{Jacobian variety}  We recall that the Jacobian variety $JS$ of a compact Riemann surface of genus $g$ is an irreducible principally polarized abelian variety of dimension $g$. By the classical Torelli's theorem, the Jacobian variety $JS$ determines $S$, namely, $S \cong S'$  if and only if $JS \cong JS'$.

For each group of automorphisms $G$ of a Riemann surface $S$, we denote by $S_G$ the underlying Riemann surface structure of the orbifold $S/G$. Let $(X, H_0)$ be a generalized Fermat pair of type $(p,n)$, where $p$ is prime. Following the main result of  \cite{CHQ}, $JX$ decomposes, up to isogeny, as follows:\begin{equation}\label{holaa}JX \sim \prod_{H_r} JX_{H_r},\end{equation}
where $H_r$ runs over all subgroups of $H_{0}$ which are isomorphic to ${\mathbb Z}_{p}^{n-1}$ and such that $X/H_r$ has positive genus. In addition, the cyclic $p$-gonal curves $X_{H_r}$ run over all curves of the form
$$y^{p}=\prod_{j=1}^{r}(x-\mu_{j})^{\alpha_{j}},$$
where $\{\mu_{1},\ldots,\mu_{r}\} \subset \{\infty,0,1,\lambda_{1},\ldots,\lambda_{n-2}\}$, $\mu_{i} \neq \mu_{j}$ if $i \neq j$, and  $\alpha_{j} \in \{1,2,\ldots,p-1\}$ satisfying that:
\begin{enumerate}
\item[(i)] if every $\mu_{j} \neq \infty$, then $\alpha_{1}=1$, $\alpha_{2}+\cdots+\alpha_{r} \equiv -1 \mbox{ mod }p$, and
\item[(ii)] if some $\mu_j=\infty$, then  $\alpha_{1}+\cdots+\alpha_{j-1}+\alpha_{j+1}+\cdots +\alpha_{r} \equiv -1 \mbox{ mod }p$.
\end{enumerate}

Let $p$ be a prime number and let $(S,N)$ be a $\mathbb{Z}_{p}^{m}$-action of signature $(0;p^{n+1})$. As observed in Proposition \ref{p11},  there is a generalized Fermat pair $(X,H_0)$ of type $(p,n)$, and a subgroup $K \cong {\mathbb Z}_{p}^{n-m}$ of $H_0$ such that $S \cong X/K$ and $N \cong H_0/K$. Since $$JX \sim JS \times P(X/S).$$ where $P(X/S)$ is the Prym variety associated to the covering map $X \to S=X/K$, the isogeny decomposition \eqref{holaa}  permits to state the following conjecture.

\begin{conj*}
Let $p \geqslant 2$ be a prime number  and let $(S,N)$ be a $\mathbb{Z}_{p}^{m}$-action of signature  $(0;p^{n+1})$, where $2 \leqslant m \leqslant n-1$. Then   
$$JS \sim \prod_{L \in \mathscr{L}}JS_L \, \mbox{ where } \, \mathscr{L}=\{L \leqslant N : L \cong \mathbb{Z}_p^{m-1}\}.$$
\end{conj*}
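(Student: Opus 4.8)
The plan is to transport the known isogeny decomposition \eqref{holaa} of $JX$ through the quotient $X \to S = X/K$ and to organize the resulting factors according to the lattice of subgroups of $N \cong H_0/K$. First I would recall that, since $X/K = S$ and $K$ acts freely, the regular covering $X \to S$ gives the isogeny $JX \sim JS \times P(X/S)$, and more refined, for every subgroup $H_r$ of $H_0$ containing $K$ one has $X_{H_r} = X/H_r = S/(H_r/K) = S_L$ with $L := H_r/K \leqslant N$. Thus the subset of factors $JX_{H_r}$ in \eqref{holaa} indexed by those $H_r \supseteq K$ is exactly $\prod_{L \leqslant N,\, \dim = \ast} JS_L$, where $H_r \cong \mathbb{Z}_p^{n-1}$ and $K \cong \mathbb{Z}_p^{n-m}$ force $L \cong \mathbb{Z}_p^{m-1}$; moreover $X/H_r$ has positive genus if and only if $S/L$ has positive genus. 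The content of the conjecture is then that, after cancelling the contribution of $P(X/S)$, only these factors survive in $JS$, i.e. the factors $JX_{H_r}$ with $H_r \not\supseteq K$ assemble — up to isogeny — precisely into the Prym variety $P(X/S)$.

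The key steps, in order, would be: (i) fix the generalized Fermat pair $(X,H_0)$ and the subgroup $K$ from Proposition \ref{p11}; (ii) partition the index set of \eqref{holaa} into $\mathcal{A} = \{H_r : K \leqslant H_r\}$ and its complement; (iii) identify $\prod_{H_r \in \mathcal{A}} JX_{H_r}$ with $\prod_{L \in \mathscr{L}} JS_L$ via $L = H_r/K$ and check the positive-genus conditions correspond (a Riemann–Hurwitz computation on the cyclic $p$-gonal quotients, matching the branching data in Proposition \ref{pp5} with the case distinction (i)/(ii) in the statement of \eqref{holaa}); (iv) prove $P(X/S) \sim \prod_{H_r \notin \mathcal{A}} JX_{H_r}$; and (v) conclude $JS \sim \prod_{L \in \mathscr{L}} JS_L$ by Poincaré reducibility and the fact that $JX \sim JS \times P(X/S)$. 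For step (iv) the natural tool is the representation-theoretic (group-algebra) decomposition of $JX$ under the action of $H_0$: the idempotents of $\mathbb{Q}[H_0]$ decompose $JX$ into isotypic pieces indexed by the characters of $H_0$ (all of degree one, since $H_0$ is abelian), the $K$-fixed part is $JS$ and carries exactly the characters trivial on $K$, and the orthogonal complement is $P(X/S)$ and carries the characters nontrivial on $K$. Matching each character $\chi$ of $H_0$ to the unique subgroup $H_r = \ker$-type object it "belongs to" in the combinatorial description of \cite{CHQ} then shows $\chi$ trivial on $K$ $\iff$ the corresponding $H_r$ contains $K$, which is exactly the partition in (ii).

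The main obstacle I anticipate is step (iv), specifically making precise the bijection between the characters of $H_0$ appearing in \eqref{holaa} and the subgroups $H_r$, and then showing this bijection is compatible with "$K \leqslant H_r$" on one side and "$\chi|_K$ trivial" on the other. In \cite{CHQ} the factor $JX_{H_r}$ is not literally an isotypic component but an isogeny factor built from several characters (those $\chi$ with $\ker\chi$ related to $H_r$), and one must check that the multiplicities and the positive-genus restrictions match up so that the bookkeeping is consistent; in particular one has to verify that no factor is over- or under-counted when passing from $X$ to $S$. A secondary technical point is to ensure the isogeny $JX \sim JS \times P(X/S)$ is compatible with the $H_0/K = N$-action so that Poincaré reducibility can be applied $N$-equivariantly; this is standard but should be stated carefully. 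If these compatibilities hold — which the explicit combinatorics of \cite{CHQ} strongly suggests — the conjecture follows; the reason it is stated as a conjecture rather than a theorem is presumably that verifying the character-to-subgroup correspondence in full generality (arbitrary $n$, arbitrary $2 \leqslant m \leqslant n-1$) requires a delicate induction or a uniform combinatorial lemma that the authors have not yet completed.
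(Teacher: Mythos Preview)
The statement you are attempting to prove is labeled a \emph{Conjecture} in the paper; the authors give no proof of it for general $m$, so there is no proof of this statement to compare against. Immediately afterwards they prove only the special case $m=2$ (stated as a separate theorem), and their argument is entirely different from yours: they work directly on $S$ with the group $N\cong\mathbb{Z}_p^2$, note that the $p+1$ subgroups $L_1,\ldots,L_{p+1}$ of order $p$ satisfy $L_iL_j=N$ for $i\neq j$ (so each joint quotient has genus zero), verify via a Riemann--Hurwitz count that $\sum_i g(S/L_i)=g(S)$, and invoke \cite[Theorem~C]{KR}. No lift to the generalized Fermat cover $X$, no Prym variety, and no character theory appear. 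The reason their argument does not extend verbatim to $m\geqslant 3$ is that the partition hypothesis behind \cite[Theorem~C]{KR} (equivalently $L_i\cap L_j=\{1\}$ for $i\neq j$) fails: two distinct index-$p$ subgroups of $\mathbb{Z}_p^m$ meet in a copy of $\mathbb{Z}_p^{m-2}$.

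Your proposal instead lifts to $X$ and exploits \eqref{holaa} from \cite{CHQ}, which is a genuinely different and more general route. Your step~(iv) --- which you flag as the main obstacle --- is in fact more accessible than you fear. The decomposition \eqref{holaa} \emph{is} the isotypic decomposition of $JX$ under $H_0$: for each index-$p$ subgroup $H_r\leqslant H_0$ the factor $JX_{H_r}$ is, up to isogeny, the $H_r$-fixed part of $JX$, hence carries exactly the single Galois orbit of nontrivial characters of $H_0/H_r\cong\mathbb{Z}_p$, whose common kernel is $H_r$ itself. Thus a nontrivial character $\chi$ of $H_0$ satisfies $\chi|_K=1$ if and only if $K\leqslant\ker\chi=H_r$, which is precisely the compatibility you need between the two partitions. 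With this identification the $K$-fixed part $JS$ of $JX$ is isogenous to $\prod_{H_r\supseteq K} JX_{H_r}=\prod_{L\in\mathscr{L}} JS_L$, and $P(X/S)$ collects the remaining factors automatically. So your outline, once this point is made explicit, does prove the conjecture; the paper simply does not carry it out.
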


The conjecture above does hold for the case $m=2, $ and the proof 
can be obtained as a consequence of a result due to Kani-Rosen in \cite{KR}.

\begin{theo}
Let $p \geqslant 2$ be a prime number and $n \geqslant 3$. Let $(S,N)$ be a $\mathbb{Z}_{p}^{2}$-action of signature  $(0;p^{n+1})$. Then  
$$JS \sim \prod_{L \in \mathscr{L}}JS_L \mbox{ where } \mathscr{L}=\{L \leqslant N : L \cong \mathbb{Z}_p\}.$$
\end{theo}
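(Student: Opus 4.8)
The plan is to deduce the asserted isogeny directly from the idempotent--relation theorem of Kani--Rosen \cite{KR}, applied to the action of $N\cong\mathbb{Z}_p^2$ on $S$. Recall that to each subgroup $U\leqslant N$ one attaches the idempotent $\epsilon_U=\tfrac{1}{|U|}\sum_{u\in U}u\in\mathbb{Q}[N]$, that $N$ acts on $JS$, and that the image $\epsilon_U\cdot JS$ is isogenous to $JS_U=J(S/U)$. The theorem of Kani--Rosen then asserts that any identity among these idempotents with nonnegative integer coefficients, say $\sum_i\epsilon_{U_i}=\sum_j\epsilon_{V_j}$ in $\mathbb{Q}[N]$, produces an isogeny $\prod_i JS_{U_i}\sim\prod_j JS_{V_j}$. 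Since $N$ is abelian, every subgroup is normal and any two commute, so the hypotheses of \cite{KR} hold without further checking; the only care needed is to recall its precise formulation in this (commutative) setting.

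The heart of the argument is the elementary group--algebra identity
$$\sum_{L\in\mathscr{L}}\epsilon_{L}=\epsilon_{\{1\}}+p\,\epsilon_{N}\qquad\text{in }\mathbb{Q}[N],$$
where, as in the statement, $\mathscr{L}$ is the set of the $p+1$ subgroups of order $p$ of $N$ (the lines in $\mathbb{F}_p^2$). To verify it, observe that every nontrivial element of $N$ generates, hence lies in, exactly one member of $\mathscr{L}$, while the identity lies in all $p+1$ of them; therefore $\sum_{L\in\mathscr{L}}\sum_{g\in L}g=(p+1)\cdot 1+\bigl(\sum_{g\in N}g-1\bigr)=p\cdot 1+p^2\epsilon_N$, and dividing by $p$ yields the displayed relation after noting $\epsilon_{\{1\}}=1$.

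Applying the Kani--Rosen theorem to this relation gives the isogeny
$$\prod_{L\in\mathscr{L}}JS_{L}\ \sim\ JS_{\{1\}}\times\bigl(JS_{N}\bigr)^{p}\ =\ JS\times\bigl(JS_{N}\bigr)^{p}.$$
Since $(S,N)$ has signature $(0;p^{n+1})$, the underlying surface $S_N$ of the orbifold $S/N$ has genus $0$, so $JS_N$ is trivial, and we conclude $JS\sim\prod_{L\in\mathscr{L}}JS_L$, as claimed. I do not expect a genuine obstacle here: the group--algebra computation is routine, and the Kani--Rosen machinery does the rest; the one spot to be watchful is quoting \cite{KR} with the correct normalization of the idempotents. (Alternatively, the same conclusion can be extracted from the decomposition \eqref{holaa} together with $JX\sim JS\times P(X/S)$ by matching cyclic $p$-gonal factors, but the route through \cite{KR} is shorter and cleaner.)
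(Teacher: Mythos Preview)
Your proof is correct and takes essentially the same route as the paper, namely an application of Kani--Rosen \cite{KR}. The only difference is in packaging: the paper invokes \cite[Theorem~C]{KR} and verifies its numerical hypothesis $\sum_i \gamma_i = g$ by a Riemann--Hurwitz count, whereas you go straight to the underlying idempotent relation $\sum_{L}\epsilon_L=\epsilon_{\{1\}}+p\,\epsilon_N$ in $\mathbb{Q}[N]$ and invoke the idempotent-relation formulation of the same paper; both are valid, and your version sidesteps the genus computation.
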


\begin{proof}
Let us write $N=\langle \phi_1, \phi_2 \rangle.$
We have that$$\mathscr{L}=\{L_1=\langle \phi_1\rangle, L_2=\langle \phi_2\rangle, L_3=\langle \phi_1\phi_2 \rangle, \ldots, L_{p+1}=\langle \phi_1\phi_2^{p-1} \}.$$Observe that if $i \neq j$ then $L_iL_j=L_jL_i$ and $\langle L_i, L_j \rangle=N$, showing that the genus of $S/\langle L_i, L_j\rangle$ is zero. Let $\Pi: S \to S/N$ and $\Pi_i:S \to S/L_i$ be the regular covering maps with deck groups $N$ and $L_i$ respectively, for each $i \in \{1, \ldots, p+1\}.$ Observe that if $q \in S/N$ is a ramification value of $\Pi$ then $\Pi^{-1}(q)$ consists of $p$ points, all of them with $N$-stabilizer $L_j$ for some $j.$   This shows that if  $c_i$ is the number of points of $S$ that are fixed by $L_i$ then 
 $c_1+\cdots+c_{p+1}=(n+1)p.$ The Riemann-Hurwitz formula applied to $\Pi_i$ implies that $2g-2=2p\gamma_i-2p+c_i(p-1),$ where $\gamma_i$ is the genus of $S/L_i.$ Consequently, one has that\begin{equation}\label{hola}(2g-2)(p+1)=2p(\gamma_1+\cdots+\gamma_{p+1})-2p(p+1)+(n+1)p(p-1).\end{equation}

Now, by considering \eqref{gen} with $m=2$, we see that $g=((n-1)p-2)(p-1)/2$ and the equality \eqref{hola} turns into $\gamma_1+\cdots+\gamma_{p+1}=g.$ The proof follows from  \cite[Theorem C]{KR}.
\end{proof}

\section{Example 1: The case $m=2, k=p$ prime and $n=3$}
 For the sake of clarity,  we now specialize our results to the case  $n=3$, that is, $\mathbb{Z}_p^2$-actions of signature $(0; p^{4}),$ and describe some examples in detail. Note that the Riemann surfaces corresponding to such $\mathbb{Z}_p^2$-actions have genus $(p-1)^2$ and form  complex one-dimensional families. In this case, we assume $p \geqslant 3$.

By applying Theorem \ref{max}, one sees that the collection ${\mathcal F}(p,3)$ consists of the following groups.
\begin{enumerate}

\item $K(r,s)=\langle a_{1}^{r}a_{2}^{s}a_{3}^{-1}\rangle,$ where $r,s \in \{0,1,\ldots,p-1\}$ satisfy $(r,s) \notin \{(0,0),(p-1,p-1)\}$.

\item  $K(l)=\langle a_{1}^{l}a_{2}^{-1}\rangle$ where $l \in \{1,\ldots,p-1\}.$
%
%
\end{enumerate}

\begin{example} \label{ej1}
Consider the ${\mathbb Z}_{p}^{2}$-action of signature $(0; p^{4})$ associated to the group $K=K(0, p-1)=\langle a_2^{p-1}a_3^{-1}\rangle.$ By Proposition \ref{pp5}, the family formed by the Riemann surfaces $S_{K}$ are algebraically represented by 
\begin{equation}\label{pepsi}S_{K}: \left\{ \begin{array}{l}
y_{1}^{p}=x(x-1)^{p-1}\\
y_{2}^{p}=(x-\lambda)^{p-1}\\
\end{array}
\right.
\end{equation}where $\lambda \in \mathbb{C}-\{0,1\}.$ A routine computation shows that the maps $$a(x,y_1, y_2)=(\tfrac{\lambda}{x}, \tfrac{\lambda^{1/p}y_2}{x},\tfrac{\lambda^{1-1/p}y_1}{x}) \mbox{ and } b(x, y_1, y_2)=(\tfrac{x-\lambda}{x-1}, \tfrac{(1-\lambda)^{1-1/p}(x-\lambda)}{(x-1)y_2}, \tfrac{(1-\lambda)^{1-1/p}x}{y_1})$$are automorphisms of $S_K$ and $\langle a, b \rangle \cong \mathbb{Z}_2^2$. In addition, one has that $a\phi_1a=\phi_2, \, a\phi_2a=\phi_1, \, b\phi_1b=\phi_2^{-1}, \, b\phi_2b=\phi_1^{-1},$ where $\phi_1$ and $\phi_2$ generate $N \cong \mathbb{Z}_p^2$ and are given in Proposition \ref{pp5}.  Now, if we set $$R:=\phi_1\phi_2, \, S:=b, \, \hat{R}:=\phi_1\phi_2^{-1}, \, \hat{S}:=a \mbox{ then }\mbox{Aut}(S_K) \geqslant \langle S, R \rangle \times \langle \hat{S}, \hat{R}\rangle \cong \mathbf{D}_p \times \mathbf{D}_p.$$ The signature of the action of this last group is $(0; 2,2,2,p)$ and therefore, as a maximal signature \cite{Sing72}, up to finitely many exceptions, the automorphism group of $S_K$ is isomorphic to $\mathbf{D}_p \times \mathbf{D}_p.$  Later we shall see that there is only one exceptional member with more than $4p^2$ automorphisms.

The interest in this family comes from the following fact. A well-known result due to Accola \cite{A84} states that if a $p$-gonal Riemann surface has genus $g > (p-1)$ then the $p$-gonal morphism is unique. The family described above was considered by Costa, Izquierdo and Ying in \cite{CIY10}, when they noticed that it has two $p$-gonal morphisms. With our notation, such morphisms are $R(x,y_1,y_2)=(x, \omega_p y_1, \omega_py_2) \mbox{ and } \hat{R}(x,y_1,y_2)=(x, \omega_p y_1, \bar{\omega}_py_2).$ We should point out that the algebraic description of this family given here differs from the one already known for such surfaces; see \cite[Section 5]{CIY10}.
\end{example}

We recall that the action of   $\langle \Phi_{1}, \Phi_{2} \rangle= {\rm Aut}_{g}(H) \cong {\bf S}_{4}$ on ${\mathcal F}(p,3)$ is given as follows (see Proposition \ref{tp}).
$$\Phi_1(a_1, a_2, a_3, a_4)=(a_2, a_1, a_3, a_4) \mbox{ and } \Phi_2(a_1, a_2, a_3, a_4)=(a_2, a_3, (a_1a_2a_3)^{-1}, a_1).$$  For instance, observe that \begin{equation}\label{red}\Phi_1(K(r,s))=K(s,r),  \Phi_2(K(l))=K(0,l) \mbox{ and }\Phi_2(K(0,s))=K(u, u) \end{equation}where $u \in \{1, \ldots, p-1\}$ satisties $u(1+s) \equiv -1 \mbox{ mod }p$.

\begin{example} 
We proceed to describe explicitly the orbits of this action for $p=5.$ Note that $\mathcal{F}(5,3)$ consists of 27 groups. By considering  \eqref{red}, it suffices to restrict our attention to the groups  $$K(r,s) \mbox{ where } (r,s) \in \{0,1,2,3,4\}^2-\{(0,0), (4,4)\} \mbox{ and } r < s.$$After some computations, one can see that there are exactly four ${\rm Aut}_{g}(H)$-orbits,  represented by 
$K(0,1),$ $K(0,2),$ $K(0,4)$ and $K(1,2).$ All the above coupled with Proposition \ref{pp5} and Corollary \ref{felix} can be summarized as follows. There are exactly four topologically pairwise non-equivalent ${\mathbb Z}_{5}^{2}$-actions of signature $(0; 5^{4})$. Equivalently, the complex one-dimensional family formed by the ${\mathbb Z}_{5}^{2}$-actions of signature $(0; 5^{4})$ consists of four irreducible components. The corresponding Riemann surfaces (of genus $16$) are  represented by the following curves:

{\small
\begin{center}
\begin{tabular}{|c| c ||c | c|} 
 \hline
 $K(0,1)$ & $\left\{\begin{array}{l}
y_{1}^{5}=x(x-1)(x-\lambda)^{3}\\
y_{2}^{5}=(x-\lambda)^{4}
\end{array}
\right.$ & $K(0,2)$ & $\left\{\begin{array}{l}
y_{1}^{5}=x(x-1)^{2}(x-\lambda)^{2}\\
y_{2}^{5}=(x-\lambda)^{4}
\end{array}
\right.
$ \\ 
 \hline
 $K(0,4)$ & $\left\{\begin{array}{l}
y_{1}^{5}=x(x-1)^{4}\\
y_{2}^{5}=(x-\lambda)^{4}
\end{array}
\right.$ & $K(1,2)$ & $\left\{\begin{array}{l}
y_{1}^{5}=x(x-1)^{2}(x-\lambda)^{2}\\
y_{2}^{5}=(x-1)(x-\lambda)^{3}
\end{array}
\right.$ \\
 \hline
\end{tabular}
\end{center}where $\lambda \in {\mathbb C} -\{0,1\}$.
}

It is worth mentioning that complete lists of Riemann surfaces of genus 16 with non-trivial automorphisms are available in the literature. For instance, in the database \cite{K} this family  is labeled as O16.271. However, it seems that in \cite{K} the computation of the number of classes of topological actions for our family lies beyond the scope of the algorithm employed.

\end{example}
%
%
%
%

The determination of the number of ${\rm Aut}_{g}(H)$-orbits in $\mathcal{F}(p,n)$ boils down to a routine --but certainly tedious-- computation. With the help of routines implemented in GAP, we were able to compute the number $N$ of orbits in $\mathcal{F}(p,3)$ for some small primes. This is summarized in the following table.
{\small
$$
\begin{array}{|c|c|c|c|c|c|c|c|c|c|c|}\hline
p & 3 & 5 & 7 & 11 & 13 & 17 & 19 & 23 & 29 & 113 \\\hline
N & 2 & 4 & 6 & 10 & 14 & 20 &24 & 32 & 48 & 580 \\\hline
\end{array}
$$
}

%
%
%
%
%
%
%
%
%
%
%

\subsubsection{\bf Extra automorphisms}
To construct $\mathbb{Z}_p^2$-actions of signature $(0; p^{4})$ endowed with extra automorphisms, we consider the following elements in ${\rm Aut}_{g}(H) \cong {\bf S}_{4}$.
$$\Phi_{3}=(3 \, 4), \quad \Phi_{4}=(1 \, 2)(3 \, 4), \quad \Phi_{5}=(2 \, 3 \, 4),\quad \Phi_{6}= (1 \, 4)(2 \, 3), \quad \Phi_{7}=(2 \, 4).$$

As the only non-trivial finite groups of M\"{o}bius transformations that keep setwise invariant a set of four points in $\bar{\mathbb{C}}$ are isomorphic to $\mathbb{Z}_2, \mathbb{Z}_3, \mathbb{Z}_4, \mathbb{Z}_2^2, \mathbf{D}_4$ and $\mathcal{A}_4,$ the only subgoups of ${\rm Aut}_{g}(H)$ induced by conformal automorphisms of $S_K/N_K \cong C_{p}(\Lambda)/H$  are, up to conjugation,  summarized in the following table.
{\small
\begin{center}
\begin{tabular}{|c| c | c| c||c| c | c | c|} 
 \hline
label & generators & group & permutation & label & generators & group & permutation\\ [0.5ex] 
 \hline\hline
 $Q_1^*$ & $ \Phi_3 $ & $\mathbb{Z}_2$ & $(3 \, 4)$  & $Q_5^*$ & $ \Phi_4, \Phi_6$ & $\mathbb{Z}_2^2$ & $(1 \, 2)(3 \, 4), (1 \, 4)(2 \, 3)$  \\  \hline

$Q_2^*$ & $ \Phi_4$ & $\mathbb{Z}_2$ & $(1 \, 2)(3 \, 4)$  & $Q_6^*$ & $ \Phi_1, \Phi_3$ & $\mathbb{Z}_2^2$ & $(1 \, 2), (3 \, 4)$  \\  \hline

$Q_3^*$ & $ \Phi_5$ & $\mathbb{Z}_3$ & $(2 \, 3 \, 4)$  & $Q_7^*$ & $ \Phi_2, \Phi_7$ & $\mathbf{D}_4$ & $(1 \, 2 \, 3 \, 4), (2 \, 4)$  \\  \hline

$Q_4^*$ & $ \Phi_2$ & $\mathbb{Z}_4$ & $(1 \, 2 \, 3 \, 4)$  & $Q_8^*$ & $ \Phi_4, \Phi_5$ & $\mathcal{A}_4$ & $(1 \, 2)(3 \, 4), (2 \, 3 \, 4)$  \\  \hline
\end{tabular}
\end{center}
}

%
%

We recall that $\mathscr{C}_p({Q_{j}}):=\{ K \in \mathcal{F}(p,3) :   K \mbox{ is } Q_{j}^*\mbox{-invariant}\}.$

\begin{prop}\label{qs} \mbox{}
\begin{enumerate}
\item $\mathscr{C}_p(Q_{1})=\{K(l), K(\tfrac{p-1}{2},\tfrac{p-1}{2}) : l=1,\ldots,p-1\}$.

\item $\mathscr{C}_p(Q_{2})=\{K(1), K(p-1), K(r,p-1-r) : r=0,\ldots,p-1\}$.

\item If $p=3$ or $p \equiv 2 \mbox{ mod }3$ then $\mathscr{C}_p(Q_{3}) = \emptyset$, and if $p \equiv 1 \mbox{ mod }3$ then $\mathscr{C}_p(Q_{3})=K(\tfrac{s}{1-s},s): s \in \mathcal{P}_1\},$ where $\mathcal{P}_1=\{s \in \{1, \ldots, p-1\}: s^{2}+s+1 \equiv 0 \mbox{ mod } p \}.$

\item If $p \equiv 3 \mbox{ mod }4$ then $\mathscr{C}_p(Q_{4})=\{K(p-1,0)\},$ and if $p \equiv 1 \mbox{ mod }4$ then $\mathscr{C}_p(Q_{4})=\{K(p-1,0), K(\tfrac{s}{1-s},s): s \in \mathcal{P}_2\},$ where $\mathcal{P}_2=\{ s \in \{1, \ldots, p-1\}: s^2+2s+2 \equiv 0 \mbox{ mod }p\}.$

\item $\mathscr{C}_p(Q_{5})=\{K(p-1), K(0,p-1), K(p-1,0)\}$.

\item $\mathscr{C}_p(Q_{6})=\{K(1), K(p-1), K(\tfrac{p-1}{2},\tfrac{p-1}{2})\}$.

\item $\mathscr{C}_p(Q_{7})=\{K(p-1,0)\}$.

\item $\mathscr{C}_p(Q_{8})=\emptyset$.
\end{enumerate}
\end{prop}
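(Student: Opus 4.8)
The plan is to verify, case by case, that a group $K \in {\mathcal F}(p,3)$ is $Q_j^*$-invariant precisely when it appears in the stated list. Since every $K$ has one of the two forms catalogued in Theorem~\ref{max}, namely $K(r,s) = \langle a_1^r a_2^s a_3^{-1}\rangle$ (type (1)) or $K(l) = \langle a_1^l a_2^{-1}\rangle$ (type (2)), and since each $Q_j^*$ is generated by one or two of the permutations $\Phi_3,\ldots,\Phi_7$ acting on the four indices, the computation reduces to: apply the relevant $\Phi$'s to the generator of $K$, rewrite the image in normal form using the relation $a_1 a_2 a_3 a_4 = 1$ (equivalently $a_4 = (a_1a_2a_3)^{-1}$), and impose that the resulting cyclic subgroup of $H \cong {\mathbb Z}_p^3$ equals $K$. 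Because $p$ is prime, two cyclic subgroups of order $p$ coincide iff their generators are proportional mod $p$; so each invariance condition becomes a small system of linear congruences mod $p$ in the exponents $r,s$ (or $l$), whose solution set I will read off.

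First I would record the action of the generating permutations on the standard generators, extending Proposition~\ref{tp}: under $\sigma \in {\bf S}_4$, $\Phi_\sigma(a_j) = a_{\sigma(j)}$ for $j \le 3$ and $\Phi_\sigma(a_4) = a_{\sigma(4)}$, where $a_4 := (a_1a_2a_3)^{-1}$; concretely $\Phi_3 = (3\,4)$ sends $(a_1,a_2,a_3) \mapsto (a_1,a_2,(a_1a_2a_3)^{-1})$, $\Phi_4 = (1\,2)(3\,4)$ sends $(a_1,a_2,a_3)\mapsto(a_2,a_1,(a_1a_2a_3)^{-1})$, and so on. Then for each of $Q_1,\ldots,Q_8$ I would run the following routine. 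Take the candidate generator $w = a_1^{x_1}a_2^{x_2}a_3^{x_3}a_4^{x_4}$ (with $x_3 = -1$, $x_4 = 0$ for type (1), or the analogous exponents for type (2)), apply each generating $\Phi$ of $Q_j^*$, and reduce $\Phi(w)$ to the form $a_1^{y_1}a_2^{y_2}a_3^{y_3}$ by substituting $a_4 = a_1^{-1}a_2^{-1}a_3^{-1}$. Invariance means $(y_1,y_2,y_3) \equiv \mu\,(x_1,x_2,x_3) \bmod p$ for some $\mu \in {\mathbb Z}_p^\ast$; eliminating $\mu$ yields the congruences that cut out $\mathscr C_p(Q_j)$. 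For example, for $Q_3 = \langle \Phi_5\rangle$ with $\Phi_5 = (2\,3\,4)$ one finds after reduction that $K(r,s)$ is invariant iff $s^2 + s + 1 \equiv 0$ and $r \equiv s/(1-s)$, which has solutions exactly when $p \equiv 1 \bmod 3$ (and never among type-(2) groups, nor for $K(l)$); this reproduces item (3). The quadratic residue conditions $s^2+s+1\equiv 0$, $s^2+2s+2\equiv 0$ governing $\mathcal P_1,\mathcal P_2$, and the divisibility-by-$4$ dichotomy for $Q_4$, all emerge this way from discriminant computations mod $p$.

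For the two-generator groups $Q_5 = \langle\Phi_4,\Phi_6\rangle$, $Q_6 = \langle\Phi_1,\Phi_3\rangle$, $Q_7 = \langle\Phi_2,\Phi_7\rangle$ and for $Q_8 = \langle\Phi_4,\Phi_5\rangle \cong {\mathcal A}_4$, invariance under $Q_j^*$ means simultaneous invariance under each generator, so the solution set is the intersection of the single-generator loci; in particular $\mathscr C_p(Q_8) = \mathscr C_p(Q_3) \cap (\text{locus fixed by }\Phi_4)$, and since the $\Phi_4$-fixed type-(1) groups are exactly those with $r + s \equiv p-1$ and $r \equiv s$ forced to contradict $s^2+s+1\equiv 0$ unless $p$ is small, one checks directly this intersection is empty, giving item (8). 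Throughout, I must also keep the constraint that the candidate actually lies in ${\mathcal F}(p,3)$, i.e.\ $(r,s) \notin \{(0,0),(p-1,p-1)\}$ for type (1) and $l \in \{1,\ldots,p-1\}$ for type (2); a few ostensible solutions of the congruences are discarded on these grounds (this is why, e.g., $K(p-1,p-1)$ never appears).

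\textbf{Main obstacle.} The substantive difficulty is bookkeeping rather than conceptual: one must carefully normalize $\Phi_\sigma(w)$ using $a_4 = (a_1a_2a_3)^{-1}$ without sign errors, since an error in a single exponent flips a congruence and corrupts the list; and one must correctly match the number-theoretic solvability of the quadratics $s^2+s+1 \equiv 0 \bmod p$ and $s^2+2s+2 \equiv 0 \bmod p$ to the congruence classes of $p$ modulo $3$ and $4$ respectively — the first solvable iff $p = 3$ or $p \equiv 1 \bmod 3$ (here $p=3$ gives the degenerate $s = 1$, excluded), the second solvable iff $-1$ is a square, i.e.\ $p \equiv 1 \bmod 4$. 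Packaging these eight cases cleanly, rather than any individual computation, is where care is required.
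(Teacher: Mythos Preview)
Your proposal is correct and follows essentially the same approach as the paper: apply each generating permutation $\Phi_\sigma$ to the cyclic generator of $K(r,s)$ or $K(l)$, normalize via $a_4=(a_1a_2a_3)^{-1}$, and solve the resulting proportionality congruences mod $p$. The paper only writes out cases (1) and (8) explicitly (using the containment $Q_2^*\leqslant Q_8^*$ rather than your $Q_3^*\leqslant Q_8^*$, which is an immaterial choice) and declares the remaining six ``analogous,'' so your sketch is in fact somewhat more complete than the paper's own proof.
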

\begin{proof} We shall only check the first and last statement, as the remaining ones are proved analogously. First, observe that $\Phi_{3}(K(l))=K(l)$ for each $l$, and therefore $K(l) \in \mathscr{C}_p(Q_{1})$. Now, notice that $$\Phi_{3}(K(r,s))=\langle a_{1}^{r}a_{2}^{s}a_{4}^{-1}\rangle=\langle a_{1}^{1+r}a_{2}^{1+s}a_{3}\rangle=\langle a_{1}^{-1-r}a_{2}^{-1-s}a_{3}^{-1}\rangle=K(p-1-r,p-1-s).$$
It follows that $\Phi_{3}(K(r,s))=K(r,s)$ if and only if  $r=s=\tfrac{p-1}{2}$, proving (1). Note that $Q_2^* \leqslant Q_8^*$ and therefore 
$$\mathscr{C}_p(Q_{8}) \subset \mathscr{C}_p(Q_{2})=\{K(1), K(p-1), K(r,p-1-r): r=0,\ldots,p-1\}.$$
Note that 
$\Phi_{5}(K(1))$ and $\Phi_{5}(K(p-1))$ are groups of type (2). Moreover, 
$$\Phi_{5}(K(r,p-1-r))=\langle a_{1}^{r}a_{3}^{-1-r}a_{4}^{-1}\rangle=\langle a_{1}^{1+r}a_{2}a_{3}^{-r}\rangle.$$
So, for $\Phi_{5}(K(r,p-1-r))=K(r,p-1-r)$, we must have that $r \neq 0$ satifies $
r^2-r-1 \equiv 0 \mbox{ mod }p \, \mbox{ and } \, r^{2}+r+1 \equiv 0 \mbox{ mod }p,$
which is clearly impossible. This proves (8).
\end{proof}

\begin{example}[Continuation of Example \ref{ej1}]
The fact that $K(0,p-1) \in \mathscr{C}_p(Q_5^*)$ guarantees that the members of the family of Riemann surfaces \begin{equation}\label{pepsi6}S_{K(0, p-1)}(\lambda): \left\{ \begin{array}{l}
y_{1}^{p}=x(x-1)^{p-1}\\
y_{2}^{p}=(x-\lambda)^{p-1}\\
\end{array}
\right.\mbox{ where } \lambda \in \mathbb{C}-\{0,1\},
\end{equation}admit extra automorphisms, and for them $\mbox{Aut}(S_{K(0,p-1)}(\lambda)) \geqslant \tilde{G}\cong \mathbb{Z}_p^2 \rtimes \mathbb{Z}_2^2.$ This situation was already studied in detail in Example \ref{ej1}.

Observe that the group $Q_{7}^{*}$ contains $Q_{5}^{*}$, but $K(0,p-1) \notin \mathscr{C}_p(Q_{7}^*)=\{K(p-1,0)\}$. However, since $K(0,p-1)=\Phi_{1}(K(0,p-1))$ and $\Phi_{1}$ normalizes $Q_{5}^{*}$, we note that  
$$\tilde{Q}_7^*  =\Phi_1Q_7^*\Phi_1  = \langle\Phi_1 \Phi_2 \Phi_1=(1\,3\,4\,2), \Phi_1\Phi_7\Phi_1=(1 \, 4)\rangle$$ contains $Q_{5}^{*}$ and $\mathscr{C}_p(\tilde{Q}_{7}^*)=\{K(0,p-1)\}$. In particular, there are some members of the family \eqref{pepsi6} admitting more than $4p^2$ automorphisms. This observation, coupled with the fact that $\tilde{Q}_7^*$ is not contained in any group $Q_i^*$ (nor in any conjugate), shows that if a member of  \eqref{pepsi6}  has more than  $4p^2$ automorphisms, then $$\mbox{Aut}(S_{K(0,p-1)}(\lambda)) \cong \mathbb{Z}_p^2 \rtimes \mathbf{D}_4.$$


A computation shows that $S_{K(0,p-1)}(\lambda)$ has an automorphism of order 4 if and only if $\lambda=\tfrac{1}{2}$, and this automorphism is given by $$c(x, y_1, y_2)=(\tfrac{2x-1}{2x}, \tfrac{2x-1}{2^{2-1/p}xy_2}, \tfrac{y_1}{2^{1-1/p}}x).$$The automorphism group of this special member acts with signature $(0; 2,4,2p).$  
\end{example}

\begin{rema}
The fact that $K(p-1)$ and $K(p-1,0)$ are $\mbox{Aut}_g(H)$-equivalent to $K(0, p-1)$ tells to us that the discussion above can also be carried out with $K(p-1)$ and $K(p-1,0)$. In these cases, we would obtain different --but equivalent-- algebraic descriptions of the same family and of their automorphisms.
\end{rema}

%
%
%
%
%
%
%
%
%
%


\section{Example 2: The case $m=2, k=p$ prime and $n=5$ }
Let $p \geqslant 2$ be a prime number. The pencil $\mathscr{C}_p$ formed by the smooth complex projective algebraic curves $C_t$ of genus $(p-1)(2p-1)$  given by $$x^{2p}+y^{2p}+z^{2p}+t(x^py^p+y^pz^p+x^pz^p)=0 \, \mbox{ where } t \in \bar{\mathbb{C}}-\{-1, \pm 2\}$$has been recently studied in \cite{MRC}. This pencil is a generalization of the pencil of Kuribayashi-Komiya quartics $\mathscr{C}_2$, also known in the literature as the KFT family (see, for instance, \cite{KK77} and \cite{KFT}). Note that $$N=\langle [x:y:z] \mapsto [\omega_p x : y: z], [x:y:z] \mapsto [x:\omega_p y : z] \rangle\cong \mathbb{Z}_p^2$$ is a group of automorphisms of each member $C_t$ of $\mathscr{C}_p.$ Moreover, the quotient $C_t/N$ has genus zero, and hence $(C_t, N)$ is a $\mathbb{Z}_p^2$-action of signature $(0; p^6).$ Furthermore, as proved in \cite[Proposition 1]{MRC}, each $C_t$ is endowed with a group of automorphisms $\mathbf{G}_p$ isomorphic to the semidirect product $\mathbb{Z}_p^2 \rtimes \mathbf{D}_3$ given by $$ \mathbf{G}_p \cong \langle a,b,r,s : a^p=b^p=[a,b]=r^3=s^2=(sr)^2=1,rar^{-1}=(ab)^{-1}, rbr^{-1}=a, sas=(ab)^{-1}, [s,b]=1  \rangle.$$The group $\mathbf{G}_p$ acts on $C_t$ with signature $(0; 2,2,3,p).$ Note that $N \trianglelefteq \mathbf{G}_p$ and $\mathbf{G}_p/N \cong \mathbf{D}_3.$

Motivated by the above, we apply our results to study $\mathbb{Z}_p^2$-actions of signature $(0; p^6)$ that admit extra automorphisms and that form complex one-dimensional families.

Let $(S, N, G)$ be triple such that $(S, N)$ is a $\mathbb{Z}_p^2$-action of signature $(0; p^6)$ and $S$ is endowed with a group of automorphisms $G$ such that $N \trianglelefteq G \leqslant \mbox{Aut}(S).$  Observe that the genus of $S$ is $(p-1)(2p-1).$ The quotient $G/N$ is a isomorphic to a non-trivial finite group $L$ of M\"{o}bius transformations that keeps setwise invariant a set of six points in $\bar{\mathbb{C}}$. If we require that the corresponding quotient has moduli one (namely, the orbifold $(S/N)/L$ has  four cone points), then there are two scenarios. 
\begin{enumerate}
\item $L$ is isomorphic to $\mathbf{D}_3$ and the signature of the action of $G$ is $(0; 2,2,3,p),$ and
\item $L$ is isomorphic to $\mathbb{Z}_2^2$ and and the signature of the action of $G$ is  $(0; 2,2,p,2p)$
\end{enumerate}
We proceed to study these cases separately.

\subsection{The case $L\cong \mathbf{D}_3$}
Note that if $p \neq 3$ then $G$ is isomorphic to a semidirect product of the form $\mathbb{Z}_p^2 \rtimes \mathbf{D}_3.$ Set  
$$L \cong G/N \cong \mathbf{D}_3= \langle a, b: a^3=b^2=(ab)^2=1\rangle.$$

Assume that the regular covering map $\pi: S \to S/N$ ramifies over $\infty, 0, 1, q_4, q_5, q_6$. It follows that, if we write $\Lambda =(q_4, q_5, q_6) \in \Omega_5,$ then 
$S \cong C_{p}(\Lambda)/K_S$ and $N \cong H/K_S$ for some $K_S \in \mathcal{F}(p,5,2).$
After conjugating by a suitable M\"{o}bius transformation, we can assume 
$a(z)=\tfrac{1}{1-z}$, $b(z)=\tfrac{\lambda z-\lambda+1}{z-\lambda}$  and that  $q_4=\lambda$, $q_5=\tfrac{1}{1-\lambda}$, $q_6=\tfrac{\lambda-1}{\lambda}.$
Note that $(q_4, q_5, q_6) \in \Omega_5$ if and only if $\lambda \neq \tfrac{1}{2}(-1 \pm i \sqrt{3}).$ The group $L$ lifts to a group of automorphisms $Q_{S,G}$ of  
\begin{equation*}
C_p(\Lambda) :  \left \{ \begin{array}{ccccccc}
x_1^p & + & x_2^p & + & x_3^p & =  & 0\\
\lambda x_1^p & + & x_2^p & + & x_4^p & =  & 0\\
\tfrac{1}{1-\lambda} x_1^p & + & x_2^p & + & x_5^p & =  & 0\\
\tfrac{\lambda-1}{\lambda} x_1^p & + & x_2^p & + & x_6^p & =  & 0\\
\end{array} \right\} \subset \mathbb{P}^5\end{equation*} such that $Q_{S,G}/K_S \cong G$ and $S/G \cong C_{p}(\Lambda)/{Q_{S,G}}.$

\begin{lemm} \label{ttmm} The group $Q_{S,G}$ is isomorphic to a semidirect product $$\mathbb{Z}_p^5 \rtimes  \mathbf{D}_3 =\langle a_1, \ldots, a_5 : a_j^p=[a_i, a_j]=1 \rangle \rtimes \langle A, B : A^3=B^2=(AB)^2=1\rangle,$$where the action by conjugation of $A$ and $B$ on $a_1, \ldots, a_5$ is given by$$A: (a_1, a_2, a_3, a_4, a_5) \mapsto (a_2, a_3, a_1, a_5, (a_1\cdots \, a_5)^{-1}) \, \mbox{ and } \, B:  (a_1, a_2, a_3, a_4, a_5)  \mapsto (a_4, (a_1\cdots \, a_5)^{-1}, a_5, a_1, a_3).$$
\end{lemm}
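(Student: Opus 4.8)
The plan is to work directly with the description of $Q_{S,G}$ given in Section~\ref{state2} as $\theta^{-1}(L)$ inside the short exact sequence $1 \to H \to \mathrm{Aut}(C_p(\Lambda)) \to A \to 1$, where $A$ is the M\"obius stabilizer of $\mathscr{B}_\Lambda = \{\infty, 0, 1, \lambda, \tfrac{1}{1-\lambda}, \tfrac{\lambda-1}{\lambda}\}$. Since $L \cong \mathbf{D}_3$ sits inside $A$ and acts on the six-point set $\mathscr{B}_\Lambda$, the group $Q_{S,G}$ fits into $1 \to H \to Q_{S,G} \to \mathbf{D}_3 \to 1$ with $H \cong \mathbb{Z}_p^5$. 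First I would verify that this sequence splits: one must exhibit an explicit lift of each of the two generators $a$ and $b$ of $\mathbf{D}_3$ to an automorphism of $C_p(\Lambda)$ of the same order ($3$ and $2$ respectively). Because $C_p(\Lambda)$ is the generalized Fermat curve sitting over $\bar{\mathbb{C}}$ via $\pi$, and $\pi$ is characteristic, the M\"obius maps $a(z) = \tfrac{1}{1-z}$ and $b(z) = \tfrac{\lambda z - \lambda + 1}{z - \lambda}$ lift to automorphisms $A, B$ of $C_p(\Lambda)$; the point is to choose these lifts so that $A^3 = B^2 = (AB)^2 = 1$ exactly, which is possible by a standard order-reduction argument (any lift has order a multiple of the downstairs order, and multiplying by a suitable element of $H$ kills the excess — here using that $\gcd(2,p) = \gcd(3,p) = 1$ when $p \neq 2, 3$, and treating small $p$ by direct check or by the structure of $\mathbf{G}_p$ already recorded in \cite{MRC}).

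Next I would compute the conjugation action of $A$ and $B$ on the generators $a_1, \dots, a_5$ of $H$. The key tool is Proposition~\ref{tp}: the action by conjugation of any $f \in \mathrm{Hom}^+_H(C_p(\Lambda))$ on $H$ is, via $\hat\eta$, exactly the permutation $\hat\rho(\eta(f)) \in \mathbf{S}_6$ that $f$ induces on the six cone points (with the convention that $a_6 := (a_1 \cdots a_5)^{-1}$ corresponds to the sixth point). So I need to identify, for the chosen ordering $q_1 = \infty, q_2 = 0, q_3 = 1, q_4 = \lambda, q_5 = \tfrac{1}{1-\lambda}, q_6 = \tfrac{\lambda-1}{\lambda}$, the permutations induced by $a(z) = \tfrac{1}{1-z}$ and $b(z) = \tfrac{\lambda z - \lambda + 1}{z - \lambda}$. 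A direct computation gives $a: \infty \mapsto 0 \mapsto 1 \mapsto \infty$, i.e. on indices $(1\,2\,3)$, and $a: \lambda \mapsto \tfrac{1}{1-\lambda} = q_5 \mapsto \tfrac{\lambda-1}{\lambda} = q_6 \mapsto \lambda$, i.e. $(4\,5\,6)$; hence conjugation by $A$ sends $(a_1, a_2, a_3, a_4, a_5, a_6) \mapsto (a_2, a_3, a_1, a_5, a_6, a_4)$, and rewriting $a_6 = (a_1 \cdots a_5)^{-1}$ gives exactly the stated formula $A: (a_1,a_2,a_3,a_4,a_5) \mapsto (a_2,a_3,a_1,a_5,(a_1\cdots a_5)^{-1})$. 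Similarly one checks $b$ fixes $q_6 = \tfrac{\lambda-1}{\lambda}$ (solving $b(z) = z$) — wait, more carefully: $b$ swaps $\infty \leftrightarrow \lambda$, swaps $0 \leftrightarrow \tfrac{1}{1-\lambda}$, and swaps $1 \leftrightarrow \tfrac{\lambda-1}{\lambda}$; on indices this is $(1\,4)(2\,5)(3\,6)$, giving conjugation $B: (a_1,\dots,a_6) \mapsto (a_4, a_5, a_6, a_1, a_2, a_3)$, and substituting $a_6 = (a_1\cdots a_5)^{-1}$ yields $B: (a_1,a_2,a_3,a_4,a_5) \mapsto (a_4, a_5, (a_1\cdots a_5)^{-1}, a_1, a_2)$. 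I should double-check this against the claimed formula $B: (a_1,\dots,a_5)\mapsto (a_4, (a_1\cdots a_5)^{-1}, a_5, a_1, a_3)$; if the stated permutation on points is $(1\,4)(2\,6)(3\,5)$ instead (which happens for a different choice of $b$ or a different labeling of $q_5, q_6$), one gets precisely the formula in the lemma. So the honest first step is to recompute which transposition triple $b$ actually induces for the given coordinates, and confirm it matches; this is a finite check.

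Having the action formulas, the last step is to assemble the isomorphism $Q_{S,G} \cong \mathbb{Z}_p^5 \rtimes \mathbf{D}_3$: the splitting gives a section $\mathbf{D}_3 \hookrightarrow Q_{S,G}$, the kernel is $H \cong \mathbb{Z}_p^5$, and the conjugation action is the one just computed, so $Q_{S,G}$ is the semidirect product with exactly that action — and one should note that this determines $Q_{S,G}$ up to isomorphism regardless of $\lambda$ (the abstract group does not vary in the family, only the curve does). I expect the main obstacle to be purely bookkeeping: getting the M\"obius computations for $a$ and $b$ on all six points exactly right with a consistent ordering, and verifying that the lifts $A, B$ can genuinely be chosen with the stated finite orders rather than just orders dividing a multiple — the latter needs the order-reduction argument to go through cleanly, which is where the hypothesis $p \neq 3$ (and implicitly $p \neq 2$, though $p=2$ is the KFT case handled in \cite{MRC}) enters. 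Everything else follows formally from Proposition~\ref{tp} and the characteristic property of $\pi$.
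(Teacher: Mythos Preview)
Your approach is correct and reaches the same conclusion, but by a different route than the paper. The paper works in explicit projective coordinates: using \cite[Corollary 9]{GHL} it writes down the general form of any lift of $a$ and $b$ as a diagonal-twisted permutation of the $x_j$'s, then solves for the scalar constants so that $A^3 = B^2 = (AB)^2 = 1$ hold on the nose in ${\rm PGL}_6$, and finally reads off $Aa_jA^{-1}$ by direct matrix computation. Your argument instead invokes Proposition~\ref{tp} to get the conjugation action for free from the permutation of the cone points (which is legitimate since $H$ is abelian, so the action is independent of the chosen lift), and then appeals to a Schur--Zassenhaus/order-reduction argument for the splitting. Each has its advantages: the paper's explicit lifts are reused later (e.g.\ in Remark~\ref{FH} to build the extra automorphisms $C,D$ at special $\lambda$), while your approach is cleaner and makes clear that the abstract isomorphism type of $Q_{S,G}$ does not depend on $\lambda$.

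Two small points. First, your initial computation of the permutation for $b$ is off: $b(0)=\tfrac{1-\lambda}{-\lambda}=\tfrac{\lambda-1}{\lambda}=q_6$ and $b(1)=\tfrac{1}{1-\lambda}=q_5$, so the permutation is $(1\,4)(2\,6)(3\,5)$, which is exactly what the lemma's formula for $B$ encodes; you caught this yourself, but it is worth nailing down rather than leaving as a check. Second, your splitting argument via coprimality genuinely needs $p\neq 2,3$; for $p\in\{2,3\}$ the appeal to \cite{MRC} does not help (that paper treats $\mathbb{Z}_p^2\rtimes\mathbf{D}_3$, not $\mathbb{Z}_p^5\rtimes\mathbf{D}_3$), so those cases really do require the explicit computation the paper carries out --- and indeed Lemma~\ref{cable} shows that in the analogous $\mathbb{Z}_2^2$ situation with $p=2$ the extension does \emph{not} split, so one cannot expect a soft argument to cover the small primes here.
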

\begin{proof}
As proved in \cite[Corollary 9]{GHL}, the fact that $a$ induces the permutation $(1 \, 2 \, 3)(4 \, 5 \, 6)$ on the subindices of $q_1, \ldots, q_6$ implies that every lifting $A$ of $a$ 
is of the form $$[x_1: \ldots : x_6] \mapsto [x_3 : A_2x_1:A_3x_2: A_4x_6: A_5x_4:A_6x_5],$$
where $A_2, \ldots, A_6$ are arbitrary complex numbers that satisfy $A_2^p=A_3^p=1, A_4^{p}=-\lambda, A_5^p=\tfrac{1}{\lambda-1} \mbox{ and } A_6^{p}=\tfrac{1-\lambda}{\lambda}.$

Similarly, every lifting $B$ of $b$ is of the form
$$[x_1: \ldots : x_6] \mapsto [x_4 : B_2 x_6: B_3 x_5: B_4x_1: B_5x_3: B_6x_2],$$
where 
$B_{2}^{p}=-\lambda, B_{3}^{p}=\lambda-1, B_{4}^{p}=\lambda^{2}-\lambda+1, B_{5}^{p}=\tfrac{\lambda^{2}-\lambda+1}{\lambda-1}, B_{6}^{p}=-\tfrac{\lambda^{2}-\lambda+1}{\lambda}.$ 
Observe that 
$$A^3([x_1: \ldots : x_6])=[A_2A_3x_1 : A_2A_3x_2:A_2A_3x_3: A_4A_5A_6x_4: A_4A_5A_6x_5:A_4A_5A_6x_6],$$
$$B^2([x_1: \ldots : x_6])=[B_4x_1 : B_2B_6x_2:B_3B_5x_3: B_4x_4: B_5B_3x_5:B_6B_2x_6],$$whereas 
$(AB)^{2}([x_1: \ldots : x_6])$ equals to $$[A_5B_3B_4x_1 : A_2A_4B_6x_2: A_3A_6B_2B_5 x_3: A_2A_4B_6x_4: A_5B_3B_4x_5: A_3A_6B_2B_5x_6].$$

Now, once fixed values for $\lambda^{1/p}$, $(\lambda-1)^{1/p}$ and $(\lambda^{2}-\lambda+1)^{1/p}$ are chosen, if we take
$$A_{2}=A_{3}=1, A_{4}=B_{2}=-\lambda^{1/p}, A_{5}=B_{3}^{-1}=\tfrac{1}{(\lambda-1)^{1/p}}, A_{6}=-\tfrac{(\lambda-1)^{1/p}}{\lambda^{1/p}},$$
$$
B_{4}=(\lambda^{2}-\lambda+1)^{1/p}, B_{5}=\tfrac{(\lambda^{2}-\lambda+1)^{1/p}}{(\lambda-1)^{1/p}},  B_{6}=-\tfrac{(\lambda^{2}-\lambda+1)^{1/p}}{\lambda^{1/p}},
$$
then $A^{3}=B^{2}=(AB)^{2}=1$. Note that $$Aa_1A^{-1}([x_1: \ldots : x_6])=A([\omega_p\tfrac{x_2}{A_2}: \tfrac{x_3}{A_3}: x_1 : \tfrac{x_4}{A_4}: \tfrac{x_5}{A_5} : \tfrac{x_6}{A_6}])=[x_1: \omega_px_2: x_3 : \ldots : x_6]=a_2.$$The remaining relations are obtained analogously. 
\end{proof}

\begin{rema} \label{FH} There are some exceptional values of $\lambda$.

{\bf{(a)}} There is the possibility for the existence of a M\"obius transformation $c$ satisfying that $c^{2}=a$ such that
$$c: q_{4} \mapsto q_{1} \mapsto q_{5} \mapsto q_{2} \mapsto q_{6} \mapsto q_{3} \mapsto q_{4}.$$

This asserts that $\lambda=\lambda_{0} \in \{2,\omega_{6}, \bar{\omega}_{6}\}$ and $c(z)=\frac{z+(1-\lambda_{0})^{2}}{(1-\lambda_{0})(z-\lambda_{0})}.$ For instance, if $\lambda_0=2$ then $c(z)=\tfrac{z+1}{2-z}$ permutes cyclically $\infty, q_5=-1, 0, q_6=\tfrac{1}{2}, 1, q_4=2$, and $c^2=a.$ Set $\Lambda_0=(2, -1, \tfrac{1}{2}).$ By arguing as in the previous proposition, $c$ lifts to an automorphisms $C$ of $C_p(\Lambda_0)$ which satisfies $C^6=[C, A]=(CB)^2=1$. Also, the action by conjugation of $C$ on $a_1, \ldots, a_5$ is given by 
$C: (a_1, a_2, a_3, a_4, a_5) \mapsto (a_5, (a_1 \cdots \, a_5)^{-1}, a_4, a_1,a_2).$ Note that $\langle C, B\rangle \cong \mathbf{D}_6$ and $\langle Q_{S, G}, C \rangle \cong \mathbb{Z}_p^5 \rtimes \mathbf{D}_6.$

\s

{\bf{(b)}} Similarly, if $\lambda=\lambda_1=\pm i$ and $\Lambda_1=(\pm i, \tfrac{1 \pm i}{2}, 1 \pm i)$ 
then the M\"{o}bius transformation $d(z)= \pm i +1$  lifts to an automorphisms $D$ of order 4 of $C_p(\Lambda_1)$ such that $\langle D, A\rangle \cong \mathbf{S}_4$ and $\langle Q_{S,G}, D\rangle \cong \mathbb{Z}_p^5 \rtimes \mathbf{S}_4$. The action by conjugation of $D$ on $a_1, \ldots, a_5$ is given by $D: (a_1, a_2, a_3, a_4, a_5) \mapsto (a_1, a_3, (a_1 \cdots \, a_5)^{-1}, a_2,a_5).$

\end{rema}

We recall that, as introduced in \S\ref{state2}, 
${\mathcal N}_{Q_{S,G}} =\{\Phi \in {\rm Aut}_{g}(H) :  \Phi Q^{*}_{S,G} \Phi^{-1}=Q^{*}_{S,G}\} \leqslant {\rm Aut}_{g}(H)$ is the normalizer of $Q^{*}_{S,G}$ in ${\rm Aut}_{g}(H)$. Abusing notation, we shall denote by $A \in Q^*_{S, G}$ the image of $A$ by $\rho_{Q_{S,G}}$ (see \eqref{repre222}).  Consider the natural group isomorphism $\sigma : \mbox{Aut}_g(H) \to \mathbf{S}_6=\mbox{Sym}\{a_1, \ldots, a_6\}$, and write $u=\sigma(A) =(1 \, 2 \, 3)(4 \, 5 \, 6) \mbox{ and } v:=\sigma(B)=(1 \, 4)(2 \, 6)(3 \, 5).$ Then ${\mathcal N}_{Q_{S,G}}$ is isomorphic to the normalizer $N$ of $\langle u, v \rangle$ in $\mathbf{S}_6,$ which is isomorphic to $\mathbf{D}_3 \times \mathbf{D}_3$ and 
$N/\langle u, v \rangle =\{ \mbox{id, } (4 \, 5 \, 6), (4 \, 6 \, 5), (2 \, 3)(5 \, 6), (2\, 3)(4\, 5), (2\, 3)(4 \, 6)\} \cong \mathbf{D}_3$. All the above is the proof of the following proposition.

\begin{prop}\label{nor}
${\mathcal N}_{Q_{S,G}}$ is generated by $Q_{S,G}^*$ and the geometric automorphisms of $H$ given by 
$$\Psi_2(a_1, \ldots, a_6)=(a_1, a_2, a_3, a_5, a_6, a_4) \, \mbox{ and } \, \Psi_5(a_1, \ldots, a_6)=(a_1, a_3, a_2, a_6, a_5, a_4).$$
\end{prop}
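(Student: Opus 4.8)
The plan is to transfer the statement to the symmetric group $\mathbf{S}_6$ through the isomorphism $\sigma:{\rm Aut}_{g}(H)\to\mathbf{S}_6$ and to compute a normalizer there. First I would record, from Lemma \ref{ttmm} together with the relation $a_1\cdots a_6=1$ (which forces the images of the dependent generator $a_6$ under $A$ and $B$ once those of $a_1,\dots,a_5$ are known, using that the $a_i$ commute), that $\sigma(A)=u:=(1\,2\,3)(4\,5\,6)$ and $\sigma(B)=v:=(1\,4)(2\,6)(3\,5)$, so that $\sigma(Q^{*}_{S,G})=P:=\langle u,v\rangle$. Since conjugation in ${\rm Aut}_{g}(H)$ corresponds to conjugation in $\mathbf{S}_6$, the proposition becomes the assertion $N_{\mathbf{S}_6}(P)=\langle P,\sigma(\Psi_2),\sigma(\Psi_5)\rangle$, and unwinding the tuple notation for $\Psi_2,\Psi_5$ one reads off $\sigma(\Psi_2)=(4\,5\,6)$ and $\sigma(\Psi_5)=(2\,3)(4\,6)$.

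Next I would pin down $N_{\mathbf{S}_6}(P)$. The key observations are that $vuv^{-1}=u^{-1}$, so $P\cong\mathbf{D}_3\cong\mathbf{S}_3$ has order $6$, and that the $P$-orbit of the point $1$ is all of $\{1,\dots,6\}$, so $P$ is a \emph{regular} (freely and transitively acting) subgroup of $\mathbf{S}_6$. By the classical fact that the normalizer of a regular permutation group is its holomorph $P\rtimes{\rm Aut}(P)$, and since $\mathbf{S}_3$ is complete ($Z(\mathbf{S}_3)=1$ and ${\rm Out}(\mathbf{S}_3)=1$), one obtains $N_{\mathbf{S}_6}(P)\cong\mathbf{S}_3\times\mathbf{S}_3$ of order $36$; this recovers the $\mathbf{D}_3\times\mathbf{D}_3$ and the explicit transversal already displayed just before the statement.

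Finally I would check that $\Psi_2$ and $\Psi_5$ do the job. On the $\mathbf{S}_6$ side this reduces to three short finite verifications: that $(4\,5\,6)$ and $(2\,3)(4\,6)$ normalize $P$ (for instance $(4\,5\,6)$ centralizes $u$ and sends $v$ to $uv$, while $(2\,3)(4\,6)$ inverts $u$ and sends $v$ to $u^2v$); that $T:=\langle(4\,5\,6),(2\,3)(4\,6)\rangle$ is dihedral of order $6$, because the second generator conjugates the first to its inverse; and that $T\cap P=\{{\rm id}\}$, which is immediate since every non-identity element of $P$ moves the point $1$ whereas $T$ fixes it. Hence $|\langle P,T\rangle|\geqslant|P|\,|T|=36=|N_{\mathbf{S}_6}(P)|$, forcing $\langle P,T\rangle=N_{\mathbf{S}_6}(P)$, and applying $\sigma^{-1}$ yields the proposition.

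I expect the only genuine obstacle to be bookkeeping: determining the permutations $u$ and $v$ correctly, in particular the images of $a_6$, which are recovered only via the relation $a_1\cdots a_6=1$; and, if one prefers to avoid the holomorph shortcut, carrying out the mildly tedious direct computation that $N_{\mathbf{S}_6}(\langle u,v\rangle)$ has order exactly $36$. The remaining steps are elementary finite checks.
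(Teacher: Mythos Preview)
Your proposal is correct and follows essentially the same route as the paper: both transfer the question to $\mathbf{S}_6$ via $\sigma$, identify $\sigma(A)=(1\,2\,3)(4\,5\,6)$ and $\sigma(B)=(1\,4)(2\,6)(3\,5)$, and determine the normalizer of $\langle u,v\rangle$ there. The paper simply asserts that this normalizer is $\mathbf{D}_3\times\mathbf{D}_3$ and lists the coset representatives $\{(4\,5\,6),(4\,6\,5),(2\,3)(5\,6),(2\,3)(4\,5),(2\,3)(4\,6)\}$, whereas you supply the justification it omits (the regular-subgroup/holomorph argument and the order count via $T\cap P=\{1\}$); this is a welcome addition rather than a different approach.
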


We recall that, following Corollary \ref{coca7}, the cardinality of $\mathscr{C}_p(Q_{S,G})/{\mathcal N}_{Q_{S,G}}$ agrees with the number of pairwise topologically inequivalent triples $(\hat{S},\hat{N},\hat{G})$, where $(\hat{S},\hat{N})$ is a ${\mathbb Z}_{p}^{2}$-action of signature $(0; p^6)$, and $\hat{N} \triangleleft \hat{G} \leqslant {\rm Aut}(\hat{S})$ is such that $\hat{G}/\hat{N}$ induces $Q^{*}_{S,G}$. Here $\mathscr{C}_p({Q_{S,G}})=\{ K \in \mathcal{F}(p,5,2) :   K \mbox{ is } Q_{S,G}^*\mbox{-invariant}\}.$

\begin{prop} \label{verde}
If $K \in \mathscr{C}_p({Q_{S,G}})$ then one of the following statements holds.
\begin{enumerate}
\item $p=3$ or $p \equiv 1 \mbox{ mod }3$, there is $l \in \{1, \ldots, p-1\}$ satisfying $l^2+l+1 \equiv 0 \mbox{ mod }p$ and $$K=K(l):=\langle a_1a_2a_3, a_1^la_2^{-1}, a_4^{l}a_6^{-1}\rangle.$$

\item There are $r, s \in \{0, \ldots, p-1\}$ satisfying $r^2+s^2-rs \equiv 1 \mbox{ mod }p$ and $$K=K(r, s):=\langle a_1a_2a_3, a_1^r a_2^s a_4^{-1}, a_1^{-s}a_2^{r-s}a_5^{-1} \rangle.$$
\end{enumerate}
\end{prop}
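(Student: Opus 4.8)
The plan is to determine exactly which subgroups $K \in \mathcal{F}(p,5,2)$ are invariant under the group $Q_{S,G}^* = \langle A, B \rangle$, where $A$ and $B$ act on the generators $a_1, \ldots, a_6$ of $H$ (subject to $a_1 \cdots a_6 = 1$) by the permutations $u = (1\,2\,3)(4\,5\,6)$ and $v = (1\,4)(2\,6)(3\,5)$ described in Lemma \ref{ttmm} and Proposition \ref{nor}. First I would recall from Theorem \ref{max} that any $K \in \mathcal{F}(p,5,2)$ has rank $3$; since it must be $Q_{S,G}^*$-invariant and $u$ has order $3$ acting on $\{a_1,a_2,a_3\}$ and on $\{a_4,a_5,a_6\}$, the element $a_1a_2a_3 = (a_4a_5a_6)^{-1}$ is the natural $Q_{S,G}^*$-fixed element of $H$, and I would first check whether $a_1a_2a_3 \in K$. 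Writing $\theta: H \to \mathbb{Z}_p^2$ for the epimorphism with kernel $K$, the induced action of $\langle u,v\rangle$ on $\mathbb{Z}_p^2 = H/K$ must be trivial on the image since $Q_{S,G}$ normalizes $K$ — more precisely, $\theta \circ A = \rho_A \circ \theta$ for some $\rho_A \in \mathrm{GL}_2(\mathbb{F}_p)$, and the relations $A^3 = B^2 = (AB)^2 = 1$ force $\langle \rho_A, \rho_B \rangle$ to be a quotient of $\mathbf{D}_3$ inside $\mathrm{GL}_2(\mathbb{F}_p)$.

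The key computational step is then to parametrize these epimorphisms. I would argue that $\theta$ is determined by the images $\theta(a_1), \ldots, \theta(a_5)$ (with $\theta(a_6)$ forced by $a_1\cdots a_6 = 1$), each of order $p$, and that $Q_{S,G}^*$-invariance of $K$ translates into: $\rho_A$ permutes $\{\theta(a_1),\theta(a_2),\theta(a_3)\}$ cyclically and likewise $\{\theta(a_4),\theta(a_5),\theta(a_6)\}$, and similarly for $\rho_B$. Since $\rho_A$ has order dividing $3$, either $\rho_A = \mathrm{id}$ — in which case $\theta(a_1) = \theta(a_2) = \theta(a_3)$, so $3\theta(a_1) = 0$ in $\mathbb{Z}_p^2$, forcing $p = 3$ (this is the source of case (1) when $p=3$) — or $\rho_A$ is a genuine order-$3$ element of $\mathrm{GL}_2(\mathbb{F}_p)$, which exists iff $x^2+x+1$ splits over $\mathbb{F}_p$, i.e. $p \equiv 1 \bmod 3$. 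Diagonalizing or putting $\rho_A$ in rational canonical form $\begin{pmatrix} 0 & -1 \\ 1 & -1 \end{pmatrix}$, and choosing a convenient basis of $\mathbb{Z}_p^2$ adapted to $\theta(a_1)$, I would extract the constraints on the exponents. Case (1) should emerge as the situation where $K$ contains $a_1a_2a_3$ and the surjectivity plus the order-$p$ conditions on $\theta(a_4)$ produce the relation $l^2 + l + 1 \equiv 0$; case (2) as the "generic" situation giving the quadratic form condition $r^2 + s^2 - rs \equiv 1$, which I recognize as the norm form of $\mathbb{Z}[\omega_3]$ and which exactly encodes that $\theta(a_4)$ and $\theta(a_5)$ together with their $\rho_A$-orbit generate $\mathbb{Z}_p^2$ and have order $p$.

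Concretely, I would: (i) set up $\theta$ and reduce to the five images; (ii) impose $B$-invariance via $v = (1\,4)(2\,6)(3\,5)$ to link the "$\{a_1,a_2,a_3\}$-data" with the "$\{a_4,a_5,a_6\}$-data", which should show that once $\theta|_{\langle a_1,a_2,a_3\rangle}$ is fixed, the values $\theta(a_4),\theta(a_5)$ are essentially determined up to the parameters $(r,s)$ or $l$; (iii) impose $A$-invariance to get the cyclic-permutation constraints, yielding the two quadratic congruences; (iv) verify that in case (1) one indeed has $a_1a_2a_3 \in K$ and in case (2) as well, then rewrite $K$ in the stated generator form using the relation $a_6 = (a_1a_2a_3a_4a_5)^{-1}$ and $a_1a_2a_3 \in K$ to simplify. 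I expect the main obstacle to be bookkeeping: correctly tracking how $v$ acts (it sends $a_1 \mapsto a_4$, $a_2 \mapsto a_6$, $a_3 \mapsto a_5$, so that $B$-invariance forces, e.g., $\theta(a_4)$ to be a prescribed linear combination of $\theta(a_1), \theta(a_2), \theta(a_3)$), and then checking that the combined $A$- and $B$-constraints collapse to precisely the two quadratic congruences listed — ruling out spurious extra families (such as groups of "type (2)" in the sense of Theorem \ref{max} that are not of the symmetric form above) by showing they cannot be simultaneously $u$- and $v$-invariant. The genus/dimension check ($\dim \mathscr{C}_p(Q_{S,G})$ should be $0$, i.e. finitely many such $K$) serves as a sanity test that no continuous parameter survives.
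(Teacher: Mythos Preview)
Your approach via the induced representation $\rho_A,\rho_B\in\mathrm{GL}_2(\mathbb{F}_p)$ is in spirit the same as the paper's, but your case analysis is built on a false statement and therefore would not recover the full result. You assert that a genuine order-$3$ element of $\mathrm{GL}_2(\mathbb{F}_p)$ exists iff $x^2+x+1$ splits over $\mathbb{F}_p$, i.e.\ iff $p\equiv 1\pmod 3$. This is not true: the companion matrix $\bigl(\begin{smallmatrix}0&-1\\1&-1\end{smallmatrix}\bigr)$ has order $3$ in $\mathrm{GL}_2(\mathbb{F}_p)$ for \emph{every} prime $p$. Consequently your dichotomy ``$\rho_A=\mathrm{id}$ versus $\rho_A$ of order $3$'' does not separate cases (1) and (2) of the proposition, and in particular your argument would wrongly conclude that $\mathscr{C}_p(Q_{S,G})=\emptyset$ when $p\equiv 2\pmod 3$, whereas case~(2) produces solutions (e.g.\ $K(1,0)$) for all $p$. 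Relatedly, the feature ``$a_1a_2a_3\in K$'' that you propose as a discriminator is shared by \emph{both} cases --- it appears among the listed generators of $K(l)$ and of $K(r,s)$ alike --- so it cannot distinguish them either.

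The paper's split is different and cleaner: it branches on whether $\theta(a_2)\in\langle\theta(a_1)\rangle$. If yes, then $\theta(a_1),\theta(a_2),\theta(a_3)$ span a line, the $A$-action forces $\theta(a_2)=\theta(a_1)^l$ with $l^2+l+1\equiv 0$, and $B$ then transports this to the $a_4,a_5,a_6$ block, yielding $K(l)$; this is exactly where the arithmetic condition $p=3$ or $p\equiv 1\pmod 3$ enters. If no, then $\phi_1=\theta(a_1),\phi_2=\theta(a_2)$ form a basis, the $A$-action forces $\theta(a_3)=\phi_1^{-1}\phi_2^{-1}$ (so $\rho_A$ is precisely the companion matrix above, with no congruence condition on $p$), and writing $\theta(a_4)=\phi_1^{r}\phi_2^{s}$ the $A$- and $B$-invariance collapse to the single norm condition $r^2+s^2-rs\equiv 1$, giving $K(r,s)$. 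If you rewrite your plan using this dichotomy --- which in your language says: case~(1) is ``$\theta(a_1)$ is an eigenvector of $\rho_A$'', case~(2) is ``$\rho_A$ is cyclic on $\langle\theta(a_1)\rangle$'' --- the rest of your outline (tracking the $B$-action via $v=(1\,4)(2\,6)(3\,5)$, reducing to the two quadratic congruences, and eliminating spurious families) goes through essentially as the paper does it.
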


\begin{rema}
The group $K(r,s)$, in part (2) of the above proposition, corresponds to the group in case (1) of Theorem \ref{max} with the following parameters:
$$r_{3}=s_{3}=p-1, \,\,\, r_{4}=r, \,\,\, s_{4}=s, \,\,\, r_5 = \left\{ \begin{array}{cc}
p-s & s>0\\
0 & s=0\\
\end{array}
\right. \,\, \mbox{ and } \,\, s_5 = \left\{ \begin{array}{cc}
r-s &  r \geqslant s\\
p+r-s & r<s\\
\end{array}
\right.
$$
\end{rema}

\begin{proof}[\bf Proof of Proposition \ref{verde}]
Let $K \in \mathscr{C}_p({Q_{S,G}})$ and let $\theta: H \to \mathbb{Z}_p^2$ be a  group epimorphism such that $K=\mbox{ker}(\theta).$ As $a_1 \notin K,$ we have that $\phi_1:=\theta(a_1)$ has order $p.$

\noindent
{\bf 1.} Assume $\theta(a_2) \in \langle \phi_1 \rangle.$ The fact that $A(a_1)=a_2, A(a_2)=a_3,  A(a_3)=a_1$ implies that $$\theta(a_2)=\phi_1^l \mbox{ and }\theta(a_3) =\phi_1^{l^2} \mbox{ for some } l \in \{1, \ldots, p-1\} \mbox{ such that }1+l+l^2 \equiv 1 \mbox{ mod }p.$$If we write $\phi_2:=\theta(a_4)$ then the we have that $a_5=B(a_3)$ and therefore $\theta(a_5)=\phi_2^{l^2}.$ Similarly, $\theta(a_6)=\phi_2^{l}.$ Note that $\phi_2 \notin \langle \phi_1 \rangle$ and $\langle \phi_1, \phi_2 \rangle \cong \mathbb{Z}_p^2$. It follows that  $K=\langle a_1a_2a_3, a_4a_5a_6, a_1^la_2^{-1}, a_1^{l^2}a_3^{-1}, a_2^{l^2}a_5^{-1}, a_2^l a_6^{-1}\rangle=\langle a_1a_2a_3,  a_1^la_2^{-1}, a_2^l a_6^{-1}\rangle=K(l).$

\noindent
{\bf 2.} Assume $\theta(a_2) \notin \langle \phi_1 \rangle.$ If we write $\phi_2:=\theta(a_2)$ then $\langle \phi_1, \phi_2 \rangle \cong \mathbb{Z}_p^2$. The action of $A$ implies that, if we write \begin{equation}\label{para0}\theta(a_3)=\phi_1^{\hat{r}}\phi_2^{\hat{s}}, \, \mbox{ then } \, \hat{r}\hat{s} \equiv 1 \mbox{ mod }p \mbox{ and }\hat{r}+\hat{s}^2 \equiv 0 \mbox{ mod }p.\end{equation}Similarly, if we set $\phi(a_i)=\phi_1^{r_i}\phi_2^{s_i} \mbox{ for }i=4,5,6$ then the action of $A$ show that \begin{equation}\label{para}r_5 \equiv \hat{r}s_4 \mbox{ mod }p, \, s_5 \equiv r_4 + \hat{s}s_4\mbox{ mod } p,  \, r_6 \equiv \hat{r}r_4 + s_4\mbox{ mod }p, \, s_6 \equiv \hat{s}r_4 \mbox{ mod }p.\end{equation}Now, the action of $B$ on $a_4$ implies that $(\phi_1^{r_4}\phi_2^{s_4})^{r_4}(\phi_1^{r_6}\phi_2^{s_6})^{s_4}=\phi_1,$ showing that $$s_4(r_4+s_6) \equiv 0 \mbox{ mod }p \, \mbox{ and } r_4^2+r_6s_4 \equiv 1 \mbox{ mod  } p.$$

We consider each possible case separately.

\noindent
{\bf 2.1} Assume $s_4=0,$ and therefore  $r_4= \pm 1.$ By \eqref{para} we have that $r_5=0, s_5=r_4, r_6=\hat{r}r_4$ and $s_6=\hat{s}r_4$ and therefore $\theta$ is given by 
$(a_1, \ldots, a_6) \mapsto (\phi_1, \phi_2, \phi_1^{\hat{r}}\phi_2^{\hat{s}}, \phi_1, \phi_2, \phi_1^{\hat{r}}\phi_2^{\hat{s}}) \mbox{ or }(\phi_1, \phi_2, \phi_1^{\hat{r}}\phi_2^{\hat{s}}, \phi_1^{-1}, \phi_2^{-1}, \phi_1^{-\hat{r}}\phi_2^{-\hat{s}})$ according to whether or not $r_4=1.$ By considering that $a_1 \cdots \, a_6 =1$ in the former case, and the action of $B$ coupled with \eqref{para0} in the latter, we obtain  $\hat{r}=\hat{s}=-1.$ We  conclude that $K=K_3:=\langle a_1a_2a_3, a_1a_4^{-1}, a_2a_5^{-1} \rangle \, \mbox{ or } \, K=K_4:=\langle a_1a_2a_3,  a_1a_4, a_2a_5\rangle,$ according to whether or not $r_4=1.$

\noindent
{\bf 2.2} Assume $r_4=-s_6$, and therefore $r_4^2+r_6s_4=1.$ By \eqref{para} we have that $s_6=\hat{s}r_4,$ and therefore $r_4=s_6=0$ or $\hat{s}=-1$. 

\noindent
{\bf 2.2.1} Assume $r_4=s_6=0$. We have $r_5=\hat{r}s_4, s_5=\hat{s}s_4, r_6=s_4$, and $s_4= \pm 1.$ It follows that $\theta$ is given by 
$$(a_1, \ldots, a_6) \mapsto (\phi_1, \phi_2, \phi_1^{\hat{r}}\phi_2^{\hat{s}}, \phi_2^{\pm 1}, \phi_1^{\pm \hat{r}}\phi_2^{\pm \hat{s}}, \phi_1^{\pm 1}).$$The action of $B$ implies that $\hat{r}=\hat{s}=-1$ and we then obtain that $\theta$ is given by $$(\phi_1, \phi_2, \phi_1^{-1}\phi_2^{-1}, \phi_2, \phi_1^{-1}\phi_2^{-1}, \phi_1) \, \mbox{ or }(\phi_1, \phi_2, \phi_1^{-1}\phi_2^{-1}, \phi_2^{-1}, \phi_1\phi_2, \phi_1^{-1}).$$Thus, we  conclude that $K=K_1:=\langle a_1a_2a_3,  a_1a_6^{-1}, a_2a_4^{-1}\rangle$ or $K=K_2:=\langle a_1a_2a_3, a_1a_6, a_2a_4 \rangle$, respectively.

\noindent
{\bf 2.2.2} Assume $\hat{s}=-1$. We have that $\hat{r}=-1$ and therefore $r_5=-s_4, s_5=r_4-s_4, r_6=s_4-r_4$ and $s_6=-r_4.$ By proceeding as before, the action of $B$ shows that $r_4^2+s_4^2-r_4s_4 \equiv 1 \mbox{ mod }p$. Thus, if we let $r:=r_4$ and $s:=s_4$ then $\theta$ is given by $(a_1, \ldots, a_6) \mapsto (\phi_1, \phi_2, \phi_1^{-1}\phi_2^{-1},  \phi_1^{r}\phi_2^{s},  \phi_1^{-s}\phi_2^{r-s},  \phi_1^{s-r}\phi_2^{-r})$, and therefore $K=K(r,s).$

The fact that $K(0,1)=K_1, K(0, p-1)=K_2, K(1,0)=K_3$ and $K(p-1,0)=K_4$ finishes the proof. 
\end{proof}

\begin{theo} Let $p \geqslant 2$ be a prime number. Consider the set $$F_p=\{(r,s) : 2 \leqslant s < r \leqslant p-2 \mbox{ and } r^2+s^2-rs \equiv 1 \mbox{ mod }p \}$$and let $\gamma$ be its cardinality. Write $$\alpha := \left\{ \begin{array}{cc}
0 & \text{if } p \equiv 2 \mbox{ mod }3\\
1 & \text{ otherwise }\\
\end{array}
\right. \,\,\, \mbox{ and }\,\,\, \beta := \left\{ \begin{array}{cc}
 1 & \text{if } p=2\\
2 & \text{otherwise } \\
\end{array}
\right.$$Then the cardinality of $\mathscr{C}_p(Q_{S,G})/{\mathcal N}_{Q_{S,G}}$ is $\alpha+\beta+\tfrac{1}{3}\gamma.$ 
\end{theo}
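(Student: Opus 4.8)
\emph{Proof plan.} The approach is to translate the $\mathcal{N}_{Q_{S,G}}$-action on $\mathscr{C}_p(Q_{S,G})$ into an explicit action on the parameters provided by Proposition~\ref{verde}, count the orbits of the latter, and invoke Corollary~\ref{coca7}. The first step is a reduction to a group of order six: by construction every $K\in\mathscr{C}_p(Q_{S,G})$ is $Q_{S,G}^{*}$-invariant, so $Q_{S,G}^{*}$ fixes each element of $\mathscr{C}_p(Q_{S,G})$; since $Q_{S,G}^{*}\trianglelefteq\mathcal{N}_{Q_{S,G}}$ with quotient $\mathbf{D}_3=\langle\bar\Psi_2,\bar\Psi_5\rangle$ by Proposition~\ref{nor} (where $\Psi_2$ has order $3$ and $\Psi_5$ order $2$), the set $\mathscr{C}_p(Q_{S,G})/\mathcal{N}_{Q_{S,G}}$ is the orbit set of this $\mathbf{D}_3$-action. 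Moreover $\Psi_2$ and $\Psi_5$ preserve the subgroup $\langle a_1,a_2,a_3\rangle$ of $H$, hence preserve the rank of the restriction to it of an epimorphism $\theta\colon H\to\mathbb{Z}_p^2$; thus the $\mathbf{D}_3$-action respects the splitting of $\mathscr{C}_p(Q_{S,G})$ into the type-(1) groups $K(l)$ and the type-(2) groups $K(r,s)$ of Proposition~\ref{verde}.

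The second step is to compute the induced action on each family: for $K=\ker\theta$ one forms $\theta\circ\Psi_j^{-1}$, re-expresses it in an appropriate basis of $H/K$, and reads off the new standard-form parameters. Using $l^{3}\equiv 1\bmod p$, this gives $\bar\Psi_2(K(l))=K(l)$ and $\bar\Psi_5(K(l))=K(l^{2})$; and on type-(2) parameters it gives $\bar\Psi_2\colon(r,s)\mapsto(s-r,-r)$ and $\bar\Psi_5\colon(r,s)\mapsto(s,r)$. Consequently $\mathbf{D}_3$ acts on the affine conic $\mathcal{E}=\{(r,s): r^{2}-rs+s^{2}\equiv 1\bmod p\}$ through the order-$3$ rotation $M_2\colon(r,s)\mapsto(s-r,-r)$ and the reflection $M_5\colon(r,s)\mapsto(s,r)$, both of which preserve $r^{2}-rs+s^{2}$; one checks $(M_2M_5)^{2}=\mathrm{id}$, so this is consistent.

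The third step is the orbit count. The type-(1) groups contribute $\alpha$ orbits: $X^{2}+X+1$ has a root in $\mathbb{Z}_p$ exactly when $p=3$ (one root $l=1$, a fixed group) or $p\equiv 1\bmod 3$ (two roots $l,l^{2}$, interchanged by $\bar\Psi_5$ and fixed by $\bar\Psi_2$, hence one orbit), and no root when $p\equiv 2\bmod 3$. For the type-(2) groups, the determinant of $M_2-\mathrm{id}$ equals $3$, so $M_2$ acts freely on $\mathcal{E}$ for every $p$ (for $p=3$ one checks directly that the $M_2$-fixed line meets $\mathcal{E}$ trivially); therefore any nontrivial $\mathbf{D}_3$-stabilizer of a point of $\mathcal{E}$ is generated by a single reflection, and a direct computation shows that the three reflections of $\mathbf{D}_3$ fix precisely the six points $(\pm1,0),\ (0,\pm1),\ (1,1),\ (-1,-1)$ of $\mathcal{E}$, which form $\beta$ orbits of size $3$ ($\beta=2$ in general; when $p=2$ these points coincide in threes, leaving the single orbit $\{(1,0),(0,1),(1,1)\}$, so $\beta=1$). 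Every other point of $\mathcal{E}$ has trivial $\mathbf{D}_3$-stabilizer and so lies in an orbit of size $6$; such points are exactly the $(r,s)\in\mathcal{E}$ with $2\le r,s\le p-2$, they satisfy $r\ne s$, and they come in swap-pairs, so there are $2\gamma$ of them and $\gamma/3$ orbits among them (for $p=2,3$ there are none, consistently with $\gamma=0$). Summing the three contributions yields $|\mathscr{C}_p(Q_{S,G})/\mathcal{N}_{Q_{S,G}}|=\alpha+\beta+\tfrac{1}{3}\gamma$, and Corollary~\ref{coca7} finishes the argument.

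The main obstacle will be the second step: carefully re-normalizing the basis of $H/K$ after applying $\Psi_2$ and $\Psi_5$ so as to identify the image groups with members of the standard parametrizations of Proposition~\ref{verde}. A secondary difficulty in the third step is pinning down the reflection fixed-point loci on $\mathcal{E}$ and treating the degenerate small primes $p=2$ and $p=3$ separately, since there $\mathcal{E}$ behaves atypically (for $p\le 7$ it consists only of the six boundary points, so $\gamma=0$).
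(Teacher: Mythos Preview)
Your proposal is correct and follows essentially the same route as the paper: both reduce the $\mathcal{N}_{Q_{S,G}}$-action to the residual $\mathbf{D}_3=\langle\Psi_2,\Psi_5\rangle$-action on the parameters of Proposition~\ref{verde}, compute the induced maps $\Psi_2\colon(r,s)\mapsto(s-r,-r)$, $\Psi_5\colon(r,s)\mapsto(s,r)$ (and $\Psi_2(K(l))=K(l)$, $\Psi_5(K(l))=K(l^2)$), and count orbits. Your organization of the type-(2) count---showing via $\det(M_2-\mathrm{id})=3$ that the rotation acts freely on the conic $\mathcal{E}$, and then locating the six reflection-fixed points $(\pm1,0),(0,\pm1),(1,1),(-1,-1)$---is slightly more systematic than the paper's direct verification that the three $\Psi_2$-translates of a point in $F_p$ are pairwise distinct, but the substance is the same.
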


\begin{proof}
Following Proposition \ref{nor}, the group ${\mathcal N}_{Q_{S,G}}$ is generated by $Q_{S,G}^*$ and by $\Psi_2,\Psi_5.$ The fact that each $K \in \mathcal{F}(p,5,2)$ is $\langle A, B \rangle$-invariant shows that we only need to determine the number of orbits of the action of $\langle \Psi_2,\Psi_5 \rangle \cong \mathbf{D}_3$ on $\mathscr{C}_p(Q_{S,G}).$ For simplicity, we write $\Psi_2=(4 \, 5 \, 6)$, $\Psi_2^2= (4 \, 6 \, 5)$, $\Psi_3=\Psi_5\Psi_2^2= (2 \, 3)(5 \, 6)$, and $\Psi_5=(2\, 3)(4 \, 6).$
Observe that the groups $K(l)$ appear if and only if $p=3$ or $p \equiv 1 \mbox{ mod }3,$ and in such a case there are exactly two groups of this type; namely $K(l)$ and $K(l^2)$. Note that 
$\Psi_3(K(l))=\langle a_1a_2a_3, a_1^la_3^{-1}, a_4^la_5^{-1}\rangle.$
The equalities $(a_1a_2a_3 \cdot a_1^la_3^{-1})^{-1}=a_1^{l^2}a_2^{-1} \mbox{ and }((a_1a_2a_3)\cdot(a_4^la_5^{-1}))^{-1}=a_4^{l^2}a_6^{-1}$ ensures that  $\Psi_3(K(l))=K(l^2).$ In a very similar way, it can be seen that $\Psi_2(K(l))=K(l)$ and $\Psi_2(K(l^2))=K(l^2).$
It follows that $K(l)$ and $K(l^2)$ form $\alpha$ orbits. We now consider the groups $K(r,s)$ where $r=0$ or $s=0$ or $r=s.$ Observe that these cases yield only six groups, namely, $K(0,1)$, $K(0, p-1)$, $K(1,0)$, $K(p-1,0)$, $K(1,1)$ and $K(p-1,p-1).$
We claim that they form $\beta$ orbits. In fact, a routine computation shows that $\{K(0,1), K(1,0), K(p-1,p-1)\}$ and $\{K(0,p-1), K(p-1,0), K(1,1)\}$ are two orbits if $p \geqslant 3$, and that they colapse in one orbit when $p=2.$ All the above coupled with the equality $\Psi_5(K(r,s))=K(s,r)$ allows us to restrict our attention to the groups $K(r,s)$ where $(r,s)$ belongs to the parameter set $F_p.$ We write $\mathcal{F}_p=\{K(r,s): (r,s) \in F_p\}.$ Note that $F_2$ and $F_3$ are empty. Therefore we assume $p \geqslant 5.$ If $(r,s) \in F_p$ then $\Psi_5(K(r,s))=K(s,r) \notin \mathcal{F}_p.$ Thus, the set $\mathcal{F}_p$ splits into orbits of length 1 or 3. The equalities 
$\Psi_2(K(r,s))=K(p+s-r, p-r)$ and $\Psi_2^2(K(r,s))=K(p-s, r-s)$, together with the fact that $(r,s), (p-s, r-s)$ and $(p+s-r, p-r)$ are pairwise distinct, show that  $\mathcal{F}_p$ splits into orbits of length 3 only. This completes the proof. 
\end{proof}

\begin{example}
If $p=2$ then $\mathscr{C}_2({Q_{S,G}})$ consists of only three groups: $K(0,1), K(1,1)$ and $K(1,0)$, and they are equivalent. We then obtain the well-known fact that among the topological classes of actions of $\mathbb{Z}_2^2$ on Riemann surfaces $S$ genus $g=3$ with signature $(0; 2^6)$, there is only one of them for which $S$ has a group of automorphisms $G \cong Q_{S,G}/K(0,1)$ which is isomorphic to  $$\langle a_1, a_2, A, B : a_1^2=a_2^2=[a_1, a_2]=A^3=B^2=(AB)^2=1,Aa_1A=a_2, Aa_2A=a_1a_2, Ba_1B=a_2 \rangle \cong \mathbf{G}_2 \cong \mathbf{S}_4,$$ and acts with signature $(0; 2,2,2,3).$ These Riemann surfaces form the family of quartics $$x^4+y^4+z^4+t(x^2y^2+x^2z^2+y^2z^2)=0,$$ denoted by $\mathscr{C}_2$ at the beginning of this section. We apply Proposition \ref{pp5} to obtain that 
\begin{equation*}
\left\{ \begin{array}{l}
y_{1}^{2}=x(x-1)(x-\lambda)(x-\tfrac{1}{1-\lambda})\\
y_{2}^{2}=(x-1)(x-\tfrac{1}{1-\lambda}) (x-\tfrac{\lambda-1}{\lambda})\\
\end{array}
\right.\mbox{ where } \lambda \in \mathbb{C}-\{0,1\},
\end{equation*}is another algebraic description for this family. Observe that, while the (generic) members of this family are non-hyperelliptic, they are the fiber product of two Riemann surfaces of genus one.

Furthermore, with the notations of Remark \ref{FH}, observe that $C(K(0,1))=\langle a_4a_5a_6, a_1a_6, a_2a_5a_6\rangle=K(0,1)$ and therefore $K(0,1)$ is $C$-invariant. Similarly,  $K(1,1)$ is $D$-invariant. We then conclude that $S_0:= C_2(\Lambda_0)/K(0,1)$ and $S_1:=C_2(\Lambda_1)/K(1,1)$ are members of $\mathscr{C}_2$ admitting a group of automorphisms isomorphic to $\mathbb{Z}_2^2 \rtimes \mathbf{D}_6 \cong \mathbb{Z}_2 \times \mathbf{S}_4$ and $\mathbb{Z}_2^2 \rtimes \mathbf{S}_4 \cong \mathbb{Z}_4^2 \rtimes \mathbf{S}_3$ respectively. These Riemann surfaces are the unique hyperelliptic curve of genus 3 with 48 automorphisms, and the Fermat quartic, respectively. 
\end{example}

\begin{example}
If $p=3$ then $\mathscr{C}_3({Q_{S,G}})$ consists of seven groups, and they split into three classes, represented by $K(0,1), K(0,2)$ and $K(1)$. It follows that there are precisely three classes of topologically inequivalent triples $(S, N, G)$, where $(S, N)$ is a $\mathbb{Z}_3^2$-action of signature $(0; 3^6)$ such that $N \trianglelefteq G$ and $G/N \cong  \mathbf{D}_3$. In this case, $G$ acts with signature $(0; 2,2,3,3).$ Observe that there are at most three groups $G$ as before. In the former case, we have   $$G = G(0,1):= Q_{S, G}/K(0,1)=\langle a_1, \ldots, a_6, A, B\rangle/\langle a_1=a_6, a_2=a_4, a_3=a_5=(a_1a_2)^{-1}\rangle$$
$$ \cong \langle a_1, a_2, A, B : \ldots, Aa_1A^{-1}=a_2, Aa_2A^{-1}=(a_1a_2)^{-1}, Ba_1B=a_2 \rangle.$$Similarly, in the second and third case, we obtain that $$G = G(0,2):= Q_{S, G}/K(0,2) \cong \langle a_1, a_2, A, B : \ldots, Aa_1A^{-1}=a_2, Aa_2A^{-1}=(a_1a_2)^{-1}, Ba_1B=a_2^{-1} \rangle$$ $$G = G(1):= Q_{S, G}/K(1) \cong \langle a_1, a_2, A, B : \ldots, [A,a_1]=[A,a_2]=1, Ba_1B=a_2 \rangle.$$
The groups $G(0,1), G(0,2)$ and $G(1)$ are pairwise non-isomorphic and turn out to be semidirect products, as in the case $p \neq 3$. Note that $G(0,1) \cong \mathbf{G}_3$ and therefore the Riemann surfaces  $C_3(\Lambda)/K(0,1)$ form the family $\mathscr{C}_3.$ 

Finally, it is worth noticing that there are two topologically inequivalent actions of $\mathbf{G}_3$ in genus 10, as can be obtained by using the routines given in \cite{BRR13}. One of them is the one represented by $K(0,1)$ and the other is not obtained here, as in this case the corresponding  $\mathbb{Z}_3^2$-quotient has genus two.  
\end{example}

Now, we can extend the previous two examples to the general case.

\begin{prop}
Let $p \geqslant 5$ be a prime number. If  $\mathcal{F}$ is a complex one-dimensional family of compact Riemann surfaces of genus $(p-1)(2p-1)$ endowed with a group of automorphisms $G$ isomorphic to a semidirect product of the form $\mathbb{Z}_p^2 \rtimes \mathbf{D}_3$, then $G \cong \mathbf{G}_p.$ In particular,  family of Kuribayashi-Komiya curves $\mathscr{C}_p$ corresponds to one irreducible component of $\mathcal{F}$.
\end{prop}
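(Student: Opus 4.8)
The plan is to first pin down the signature of the $G$-action, then translate the problem into the language of generalized Fermat curves so that Lemma~\ref{ttmm} and Proposition~\ref{verde} become available, and finally to exploit the rigidity of the faithful two-dimensional representation of $\mathbf{D}_3\cong\mathbf{S}_3$ over $\mathbb{F}_p$.

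Since $|G|=6p^2$, a surface $S\in\mathcal{F}$ carries a $G$-action of some signature $(\gamma;k_1,\ldots,k_r)$ with $3\gamma-3+r=1$, so $(\gamma,r)=(1,1)$ or $(0,4)$. Riemann--Hurwitz together with $g(S)=(p-1)(2p-1)$ rules out $(1;k_1)$ (it would force $k_1=3p/(p+3)$, not an integer for $p\geq 5$) and, in the case $(0;k_1,k_2,k_3,k_4)$, yields $\sum_i 1/k_i=\tfrac{4}{3}+\tfrac{1}{p}$. Each $k_i$ is the order of an element of $G\cong\mathbb{Z}_p^2\rtimes\mathbf{D}_3$, hence lies in $\{2,3,p,2p,3p\}$ (there is no element of order $6$, $6p$, $p^2,\ldots$ because the $p$-part has exponent $p$ and $\mathbf{D}_3$ has element orders $1,2,3$). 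A finite inspection of this Diophantine condition shows the unique possibility is $\{k_1,k_2,k_3,k_4\}=\{2,2,3,p\}$, so the $G$-action has signature $(0;2,2,3,p)$.

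Let $N\lhd G$ be the subgroup isomorphic to $\mathbb{Z}_p^2$ (the unique, normal $p$-Sylow of $G$). In the degree-$6$ cover $S/N\to S/G$ with deck group $G/N\cong\mathbf{D}_3$, the points of $S$ with stabiliser of order $2$ or $3$ have trivial $N$-stabiliser, so their images in $S/N$ are exactly the ramification points of $S/N\to S/G$ lying over the cone points of order $2,2,3$ of $S/G$, while the points with stabiliser of order $p$ lie in $N$ and produce a single free $\mathbf{D}_3$-orbit of six cone points of order $p$ in $S/N$. Riemann--Hurwitz for $S/N\to S/G$ then gives $g(S/N)=0$; hence $(S,N)$ is a $\mathbb{Z}_p^2$-action of signature $(0;p^6)$ (in particular $n=5$), and $G/N$ acts on the six cone points as the regular representation of $\mathbf{D}_3$. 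Consequently the set-up of \S\ref{state2} and of the section $m=2,k=p,n=5$ applies: there are a generalized Fermat curve $C_p(\Lambda)$ of type $(p,5)$, a lift $Q_{S,G}\cong\mathbb{Z}_p^5\rtimes\mathbf{D}_3$ as in Lemma~\ref{ttmm}, and a subgroup $K\in\mathscr{C}_p(Q_{S,G})$ with $G\cong Q_{S,G}/K$; by Proposition~\ref{verde}, $K$ equals $K(l)$ or $K(r,s)$.

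Now fix the splitting $\mathbf{D}_3\hookrightarrow Q_{S,G}$ of Lemma~\ref{ttmm}. Its image meets $H\cong\mathbb{Z}_p^5$ trivially, and since $K\subseteq H$ the composition $\mathbf{D}_3\hookrightarrow Q_{S,G}\twoheadrightarrow G$ is injective with image a complement to $N=H/K$; thus $G\cong N\rtimes_{\psi}\mathbf{D}_3$ for the induced action $\psi\colon\mathbf{D}_3\to\mathrm{GL}_2(\mathbb{F}_p)$. This $\psi$ is faithful: a direct computation with the formula of Lemma~\ref{ttmm} shows that the order-$3$ generator $A$ acts on $H$ with one-dimensional fixed subspace, so $(A-\mathrm{id})H$ is $4$-dimensional and cannot be contained in the $3$-dimensional subgroup $K$; hence $A$ acts nontrivially on $N$, and as $\langle A\rangle$ is the only proper nontrivial normal subgroup of $\mathbf{D}_3$, $\psi$ is faithful. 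For $p\geq 5$ we have $p\nmid 6$, so $\mathbb{F}_p[\mathbf{S}_3]$ is semisimple and possesses a \emph{unique} two-dimensional simple module, the standard representation; a faithful two-dimensional representation must be this one. Since the action of $\mathbf{D}_3$ on $\mathbb{Z}_p^2$ defining $\mathbf{G}_p$ is exactly the standard representation (the matrices attached to $r\mapsto(rar^{-1},rbr^{-1})=((ab)^{-1},a)$ and $s\mapsto(sas,sbs)=((ab)^{-1},b)$ have orders $3$ and $2$ and generate it), we conclude $G\cong\mathbf{G}_p$. Finally, by \cite{MRC} each member of the Kuribayashi--Komiya pencil $\mathscr{C}_p$ has genus $(p-1)(2p-1)$ and admits $\mathbf{G}_p\cong\mathbb{Z}_p^2\rtimes\mathbf{D}_3$ acting with signature $(0;2,2,3,p)$, so $\mathscr{C}_p\subseteq\mathcal{F}$; being an irreducible (rational) curve it is the closure of a single equisymmetric stratum, hence one of the finitely many irreducible components of $\mathcal{F}$ (cf.\ \S2.3). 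The main obstacle is the faithfulness step --- ruling out the ``degenerate'' semidirect products $\mathbb{Z}_p^2\rtimes\mathbf{D}_3$ in which $\mathbf{D}_3$ acts with kernel $\langle A\rangle$ --- after which the uniqueness of the standard representation over $\mathbb{F}_p$ closes the argument; note this uniqueness genuinely needs $p\nmid 6$, in accordance with the failure of the statement at $p=3$ (where $G(0,1),G(0,2),G(1)$ are pairwise non-isomorphic).
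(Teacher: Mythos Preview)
Your argument is correct and takes a genuinely different route from the paper. The paper's proof proceeds by brute force: after reducing (as you do) to $G\cong Q_{S,G}/K$ with $K\in\mathscr{C}_p(Q_{S,G})$, it runs through the list of Proposition~\ref{verde} case by case, writes down an explicit presentation of each quotient $G(l)$, $G(0,1)$, $G(0,p-1)$, $G(1,0)$, $G(p-1,0)$, $G(r,s)$, and then exhibits concrete changes of generators showing that all of these presentations are isomorphic to $\mathbf{G}_p$. Your proof bypasses this enumeration entirely: you observe that the splitting of Lemma~\ref{ttmm} descends to a splitting $G\cong N\rtimes_{\psi}\mathbf{D}_3$, establish that $\psi$ is faithful by the clean dimension count $\dim_{\mathbb{F}_p}(A-\mathrm{id})H=4>3=\dim_{\mathbb{F}_p}K$, and then invoke the uniqueness of the faithful two-dimensional $\mathbb{F}_p[\mathbf{S}_3]$-module for $p\nmid 6$. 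This is shorter and more conceptual; in particular it makes transparent why the hypothesis $p\geqslant 5$ is needed (semisimplicity of $\mathbb{F}_p[\mathbf{S}_3]$), whereas in the paper's case-by-case check that restriction is only visible a posteriori from the $p=3$ example. Note incidentally that your reference to Proposition~\ref{verde} is not actually used in your argument: once you know $K\leqslant H$ has index $p^2$ and is $Q_{S,G}^*$-invariant, the faithfulness step goes through without knowing the explicit form of $K$. The paper's approach, on the other hand, has the advantage of producing the explicit presentations of all the quotients, which connects directly to the detailed stratum-by-stratum description carried out elsewhere in the section.
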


\begin{proof}
By the Riemann-Hurwitz fomula, if $S$ belongs to a complex one-dimensional family $\mathcal{F}$ as in the statement of the proposition, then $(S, N)$ is a $\mathbb{Z}_p^2$-action of signature $(0; p^6),$ where $N$ is the $p$-Sylow subgroup of $G.$ Then $S \cong C_p(\Lambda)/K$ and $G \cong Q_{S, G}/K$ for some $K \in \mathscr{C}_p(Q_{S,G}).$ We now study the quotients $Q_{S, G}/K$.

\noindent
{\bf 1.} Assume $K=K(l):=\langle a_1a_2a_3, a_1^la_2^{-1}, a_4^{l}a_6^{-1}\rangle,$ where $p \equiv 1 \mbox{ mod }3$ and $l \in \{1, \ldots, p-1\}$ satisfies $l^2+l+1 \equiv 0 \mbox{ mod }p$. In this case, $G(l):= Q_{S, G}/K(l)=\langle a_1, \ldots, a_6, A, B\rangle/\langle a_1a_2a_3=1, a_6=a_4^l, a_2=a_1^l \rangle.$
Note that in the quotient $a_2=a_1^l, a_3=a_1^{l^2}, a_4=a_6^{l^2}, a_5=a_6^l .$ It follows that 
$G(l) \cong \langle a_1, a_2, A, B : \ldots, Aa_1 A^{-1}=a_1^l, A a_2 A^{-1}= a_2^{l^2}, B a_1 B= a_2  \rangle.$
\noindent
{\bf 2.} Assume $K=K(r, s):=\langle a_1a_2a_3, a_1^r a_2^s a_4^{-1}, a_1^{-s}a_2^{r-s}a_5^{-1} \rangle$ where $r, s \in \{0, \ldots, p-1\}$ satisfy $r^2+s^2-rs \equiv 1 \mbox{ mod }p$. In this case, one has that $$G(0,1)=Q_{S,G}/K(0,1) \cong \langle a_1, a_2, A, B : \ldots, Aa_1 A^{-1}=a_2, A a_2 A^{-1}= (a_1 a_2)^{-1}, B a_1 A= a_2 \rangle.$$ $$G(0,p-1) =Q_{S,G}/K(0,p-1) \cong \langle a_1, a_2, A, B : \ldots, Aa_1 A^{-1}=a_2, A a_2 A^{-1}= (a_1 a_2)^{-1}, B a_1 B = a_2^{-1} \rangle.$$$$G(1,0)=Q_{S,G}/K(1,0)  \cong \langle a_1, a_2, A, B : \ldots, Aa_1 A^{-1}=a_2, A a_2 A^{-1}= (a_1a_2)^{-1}, B a_1 B = a_1, B a_2 B = (a_1a_2)^{-1}\rangle.$$$$G(p-1,0) =Q_{S,G}/K(p-1,0) \cong \langle a_1, a_2, A, B : \ldots, Aa_1 A^{-1}=a_2, A a_2 A^{-1}= (a_1a_2)^{-1}, B a_1 B= a_1^{-1}, B a_2 B = a_1 a_2 \rangle,$$ and for $r, s \neq 0$ we have that $G(r,s)=Q_{S,G}/K(r,s) \cong\langle a_1, a_2, A, B : \ldots, A a_1 A^{-1}=a_1^l, A a_2 A^{-1}= a_2^{l^2}, B a_1 B=  a_2 \rangle.$

Observe that the elements $\sigma_1:=a_2$ and $\sigma_2:=(a_1a_2)^{-1}$ of $K(0,1)$ generate a subgroup of it isomorphic to $K(1,0).$ Likewise, it can be seen that $G(l)\cong G(0,1)\cong G(0, p-1) \cong G(1,0)\cong G(p-1,0) \cong G(r,s).$ 
Finally, by considering the elements $\sigma_1:=a_1^2a_2$ and $\sigma_2:=a_1^{-1} a_2^{-2}$ of $K(0,1)$, we conclude that $G \cong \mathbf{G}_p$.
\end{proof}

\begin{example} 
The following table summarizes the number $N$ of topologically distinct triples $(S, N, \mathbf{G}_p )$ where $(S, N)$ is a $\mathbb{Z}_3^2$-action of signature $(0; 3^6)$ and the signature of $S/\mathbf{G}_p$ is $(0; 2,2,3,p)$, for some small primes (c.f. \cite[Proposition 2]{MRC}). 
{\small
$$
\begin{array}{|c|c|c|c|c|c|c|c|c|c|}\hline
p &  5 & 7 & 11 & 13 & 17 & 19 & 23 & 29 & 31 \\\hline
N &  2 & 3 & 3 & 4 &4 & 5 & 5 & 6 & 7 \\\hline
\end{array}
$$
}
\end{example}

\subsection{The case $L=\mathbb{Z}_2^2$} Note that if $p \neq 2$ then $G$ is isomorphic to a semidirect product of the form $\mathbb{Z}_p^2 \rtimes \mathbb{Z}_2^2.$ Set  $$L \cong G/N \cong \mathbb{Z}_2^2= \langle a, b: a^2=b^2=(ab)^2=1\rangle.$$

If $\infty, 0, 1, q_4, q_5, q_6$ are the cone points of $S/N$ then, after conjugating by a suitable M\"{o}bius transformation, we can assume $a(z)=-z, \, b(z)=\tfrac{\lambda}{z} \mbox{ and that } q_4=\lambda, \,q_5=-1, \,q_6=-\lambda.$ Note that $\Lambda=(q_4, q_5, q_6) \in \Omega_5$ if and only if $\lambda \neq 0, \pm 1.$ It follows that $S \cong C_{p}(\Lambda)/K_S$ and $N \cong H/K_S$ for some $K_S \in \mathcal{F}(p,5,2),$ where 
\begin{equation*}
C_p(\Lambda) :  \left \{ \begin{array}{ccccccc}
x_1^p & + & x_2^p & + & x_3^p & =  & 0\\
\lambda x_1^p & + & x_2^p & + & x_4^p & =  & 0\\
- x_1^p & + & x_2^p & + & x_5^p & =  & 0\\
-\lambda  x_1^p & + & x_2^p & + & x_6^p & =  & 0\\
\end{array} \right\} \subset \mathbb{P}^5\end{equation*}The group $L$ lifts to a group of automorphisms $Q_{S,G}$ of $C_p(\Lambda)$
  such that $Q_{S,G}/K_S \cong G$ and $S/G \cong C_{p}(\Lambda)/{Q_{S,G}}.$

\begin{lemm}\label{cable} The structure of the group $Q_{S,G}$ is as follows.
\begin{enumerate}
\item If $p \geqslant 3$ is a prime number, then $Q_{S,G}$ is isomorphic to a semidirect product $$\mathbb{Z}_p^5 \rtimes  \mathbb{Z}_2^2 =\langle a_1, \ldots, a_5 : a_j^p=[a_i, a_j]=1 \rangle \rtimes \langle A, B : A^2=B^2=(AB)^2=1\rangle,$$where the action by conjugation of $A$ and $B$ on $a_1, \ldots, a_5$ is given by$$A: (a_1, a_2, a_3, a_4, a_5) \mapsto (a_1, a_2, a_5, (a_1\cdots \, a_5)^{-1}, a_3) \, \mbox{ and } \, B:  (a_1, a_2, a_3, a_4, a_5)  \mapsto (a_2, a_1, a_4, a_3, (a_1\cdots \, a_5)^{-1}).$$

\item If $p=2$, then $Q_{S,G}$ is isomorphic to $$\langle a_{1},\ldots,a_{5},A,B: a_{j}^{2}=[a_i, a_j]=B^{2}=1, A^{2}=a_{1}, (AB)^{2}=a_{4}a_{5}, Aa_{1}A^{-1}=a_{1}, Aa_{2}A^{-1}=a_{2}, $$ $$ Aa_{3}A^{-1}=a_{5}, Aa_{4}A^{-1}=a_{1}a_{2}a_{3}a_{4}a_{5},
Ba_{1}B=a_{2}, Ba_{3}B=a_{4}, Ba_{5}B=a_{1}a_{2}a_{3}a_{4}a_{5}\rangle.$$
\end{enumerate}
\end{lemm}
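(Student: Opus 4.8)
The plan is to mimic the proof of Lemma~\ref{ttmm}, now with the Möbius generators $a(z)=-z$ and $b(z)=\lambda/z$. First I would record the permutations these induce on the cone points $q_1=\infty,q_2=0,q_3=1,q_4=\lambda,q_5=-1,q_6=-\lambda$: a direct check gives that $a$ induces $(3\,5)(4\,6)$ and $b$ induces $(1\,2)(3\,4)(5\,6)$ on the subindices. By \cite[Corollary 9]{GHL}, any lift $A$ of $a$ and $B$ of $b$ to $C_p(\Lambda)\subset\mathbb{P}^5$ is then a monomial transformation whose coordinates are those of $[x_1:\cdots:x_6]$ permuted according to the respective permutation and rescaled by scalars $A_1,\dots,A_6$ (resp. $B_1,\dots,B_6$). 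Substituting these monomial maps into the four defining equations of $C_p(\Lambda)$, and using the curve relations to eliminate $x_3^p,\dots,x_6^p$ in terms of $x_1^p$ and $x_2^p$, forces a system of constraints of the shape ``$A_j^p$ equals a prescribed scalar multiple of $A_1^p$'' (for instance $A_2^p=A_3^p=A_4^p=A_5^p=A_6^p=-A_1^p$), and analogously for the $B_j$ with multiples involving $\lambda$ and $\lambda-1$.

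Next I would compute the three candidate relators. Since $A^2$, $B^2$ and $(AB)^2$ lie over $a^2=b^2=(ab)^2=\mathrm{id}$, each is a diagonal transformation, hence an element of $H$, given explicitly by products of the scalars above; e.g. $A^2=\mathrm{diag}(A_1^2,A_2^2,A_3A_5,A_4A_6,A_3A_5,A_4A_6)$ up to projective scaling. For $p\geq 3$ the constraints leave enough freedom to choose explicit values of $A_2,\dots,A_6,B_2,\dots,B_6$ (in terms of fixed $p$-th roots of $\lambda$, $\lambda-1$, etc.), exactly as in the $\mathbf{D}_3$ case, so that $A^2=B^2=(AB)^2=\mathrm{id}$ in $\mathrm{PGL}_6(\mathbb{C})$. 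Since $H$ is the unique generalized Fermat group of $C_p(\Lambda)$ it is characteristic, hence normal in $Q_{S,G}$, with $Q_{S,G}/H\cong L\cong\mathbb{Z}_2^2$; the subgroup $\langle A,B\rangle$ is then a complement and $Q_{S,G}\cong\mathbb{Z}_p^5\rtimes\mathbb{Z}_2^2$. Reading off $Aa_jA^{-1}$ and $Ba_jB^{-1}$ from the monomial forms — which are dictated by the two permutations together with $a_6=(a_1\cdots a_5)^{-1}$, the source of the terms $(a_1\cdots a_5)^{-1}$ in the statement — yields the stated conjugation action.

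For $p=2$ the same computation produces the different presentation in part~(2), and this is the step I expect to demand the most care. Here $\omega_2=-1$, so a constraint such as $A_2^2=-A_1^2$ forces $A_2/A_1=\pm i$, whence the first two entries of $A^2=\mathrm{diag}(A_1^2,A_2^2,A_3A_5,A_4A_6,A_3A_5,A_4A_6)$ differ by the sign $-1$; thus $A^2$ can never be the identity in $\mathrm{PGL}_6(\mathbb{C})$, and no choice of lift removes this obstruction. Normalizing the remaining scalars conveniently gives $A^2=a_1$, while an analogous but longer bookkeeping gives $(AB)^2=a_4a_5$ and shows that $B^2$ can still be trivialized. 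Therefore $Q_{S,G}$ is the non-split extension of $\mathbb{Z}_2^2$ by $\mathbb{Z}_2^5$ displayed in the statement, with the conjugation formulas again read off from the monomial forms of $A$ and $B$. The main obstacle is organizational rather than conceptual: keeping the projective scalings consistent throughout and, in the $p=2$ case, pinning down precisely which elements of $H$ the unavoidable squares $A^2$ and $(AB)^2$ equal after a suitable choice of lifts.
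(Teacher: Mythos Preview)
Your proposal is correct and follows essentially the same approach as the paper's proof: both write out the general monomial lifts of $a$ and $b$ dictated by the permutations $(3\,5)(4\,6)$ and $(1\,2)(3\,4)(5\,6)$, read off the $p$-th power constraints on the scalars from the defining equations, and then choose explicit values (for $p\geq 3$ trivializing $A^2,B^2,(AB)^2$; for $p=2$ exhibiting the unavoidable relations $A^2=a_1$, $(AB)^2=a_4a_5$ with $B^2=1$). One small slip: since here the cone points are $\infty,0,1,\lambda,-1,-\lambda$, the constraints on the $B_j$ involve only $\lambda$ and $-1$ rather than $\lambda-1$, but this does not affect your argument.
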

\begin{proof}
Every lifting of $a$ is of the form
$$A([x_1: \ldots : x_6])=[x_1:\alpha_{2} x_2:\alpha_{3} x_5:\alpha_{4} x_6: \alpha_{5} x_3:\alpha_{6}x_4] \mbox{ where }
\alpha_{2}^{p}=\alpha_{3}^{p}=\alpha_{4}^{p}=\alpha_{5}^{p}=\alpha_{6}^{p}=-1,$$
and every lifting of $b$ is of the form
$$B([x_1: \ldots : x_6])=[x_2:\beta_{2} x_1:\beta_{3} x_4:\beta_{4} x_3:\beta_{5} x_6:\beta_{6} x_5] \mbox{ where }
\beta_{2}^{p}=\beta_{4}^{p}=\lambda, \beta_{3}^{p}=1, \beta_{5}^{p}=-1, \beta_{6}^{p}=-\lambda.$$

\noindent
{\bf 1.} If $p \geqslant 3$ then we may take $\alpha_{2}=\alpha_{3}=\alpha_{4}=\alpha_{5}=\alpha_{6}=-1$ and 
$\beta_{2}=\beta_{4}=\eta, \beta_{3}=1, \beta_{5}=-1, \beta_{6}=-\eta$ where $\eta$ is a fixed choice of $\lambda^{1/p}$, to get
$$A([x_1: \ldots : x_6])=[-x_1:x_2:x_5:x_6:x_3:x_4] \mbox{ and }B([x_1: \ldots : x_6])=[x_2:\eta x_1:x_4:\eta x_3:-x_6:-\eta x_5],$$
which satisfy the relations $A^{2}=B^{2}=(AB)^{2}=1$.

\noindent
{\bf 2.} If $p=2$ then we may take $\alpha_{2}=\alpha_{3}=\alpha_{4}=\alpha_{5}=\alpha_{6}=i$ and  
$\beta_{2}=\beta_{4}=-\eta, \beta_{3}=1, \beta_{5}=-i, \beta_{6}=-i\eta$ where $\eta$ is a fixed choice of $\lambda^{1/2}$, 
to get
$$A([x_1: \ldots : x_6])=[-ix_1:x_2:x_5:x_6:x_3:x_4] \mbox{ and }B([x_1: \ldots : x_6])=[x_2:-\eta x_1:x_4:-\eta x_3:-ix_6:-i\eta x_5],$$
which satisfy the relations
$A^{2}=a_{1}, B^{2}=1, (AB)^{2}=a_{4}a_{5}$.
\end{proof}

\begin{rema}\label{dd} Observe that if $\lambda=\lambda_{0} =i$ and $\Lambda_1=(i, -1, -i)$, then the M\"{o}bius transformation $d(z)= iz$ satisfies that $d^2=a$ and lifts to an automorphisms $D$ of order 4 of $C_p(\Lambda_1)$ such that $\langle D, B\rangle \cong \mathbf{D}_4$ and $\langle Q_{S,G}, D\rangle \cong \mathbb{Z}_p^5 \rtimes \mathbf{D}_4$. The action by conjugation of $D$ on $a_1, \ldots, a_5$ is given by $D: (a_1, a_2, a_3, a_4, a_5) \mapsto (a_1, a_2, a_4, a_5, (a_1 \cdots a_5)^{-1}).$
\end{rema}

Consider the group isomorphism $\sigma : \mbox{Aut}_g(H) \to \mathbf{S}_6=\mbox{Sym}\{a_1, \ldots, a_6\}$ and write $u=\sigma(A) =(3 \, 5)(4 \, 6) \mbox{ and } v:=\sigma(B)=(1 \, 2)(3 \, 4)(5 \, 6).$

\begin{prop}\label{nor2}
${\mathcal N}_{Q_{S,G}}$ is generated by $Q_{S,G}^*$ and the geometric automorphisms of $H$ given by 
$$\Psi_2(a_1, \ldots, a_6)=(a_1, a_2, a_3, a_6, a_5, a_4) \, \mbox{ and } \, \Psi_3(a_1, \ldots, a_6)=(a_1, a_2, a_4, a_3, a_6, a_5).$$
\end{prop}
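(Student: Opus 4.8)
The plan is to identify $\mathcal{N}_{Q_{S,G}}$ with the normalizer of $\langle u,v\rangle$ inside $\mathbf{S}_6$ under the isomorphism $\sigma:\mathrm{Aut}_g(H)\to\mathbf{S}_6$ of Proposition \ref{tp}, exactly as was done in the $L\cong\mathbf{D}_3$ case (Proposition \ref{nor}). Since $K_S$ (and every $K\in\mathcal{F}(p,5,2)$) is $\langle A,B\rangle$-invariant, and since $\mathcal{N}_{Q_{S,G}}$ is by definition the $\mathrm{Aut}_g(H)$-normalizer of $Q^*_{S,G}$, while $\sigma$ sends $Q^*_{S,G}$ to $\langle u,v\rangle\leqslant\mathbf{S}_6$, the claim reduces to a purely group-theoretic computation: determine $N_{\mathbf{S}_6}(\langle u,v\rangle)$ where $u=(3\,5)(4\,6)$ and $v=(1\,2)(3\,4)(5\,6)$, and then exhibit generators of $N_{\mathbf{S}_6}(\langle u,v\rangle)$ modulo $\langle u,v\rangle$ whose preimages under $\sigma$ are $\Psi_2$ and $\Psi_3$.

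First I would record that $\langle u,v\rangle$ is a Klein four-group: one checks $u^2=v^2=1$, $uv=vu=(1\,2)(3\,6)(4\,5)$, so $\langle u,v\rangle=\{e,\ (3\,5)(4\,6),\ (1\,2)(3\,4)(5\,6),\ (1\,2)(3\,6)(4\,5)\}$. Next I would compute its normalizer in $\mathbf{S}_6$ directly. The orbit structure of $\langle u,v\rangle$ on $\{1,\dots,6\}$ (one orbit $\{1,2\}$ and one orbit $\{3,4,5,6\}$, the latter being a regular Klein action) is preserved by conjugation, so any normalizing permutation either fixes $\{1,2\}$ setwise or — since the nontrivial elements of $\langle u,v\rangle$ are distinguished by whether they move $1,2$ — must actually fix $\{1,2\}$ setwise; hence $N_{\mathbf{S}_6}(\langle u,v\rangle)\leqslant\mathbf{S}_{\{1,2\}}\times\mathbf{S}_{\{3,4,5,6\}}$. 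Restricting to $\{3,4,5,6\}$, the three elements $(3\,5)(4\,6),(3\,4)(5\,6),(3\,6)(4\,5)$ together with the identity form the regular Klein subgroup $V\leqslant\mathbf{S}_4$, whose normalizer in $\mathbf{S}_4$ is all of $\mathbf{S}_4$ (indeed $\mathbf{S}_4\cong V\rtimes\mathbf{S}_3$ with $V$ the unique such normal subgroup). Combined with the transposition $(1\,2)$ on the first block, this gives $|N_{\mathbf{S}_6}(\langle u,v\rangle)|\leqslant 2\cdot 24=48$; and conversely every such permutation does normalize $\langle u,v\rangle$ once one checks that the $\mathbf{S}_3$-action on $V$ is compatible with the pairing dictated by $v$ (the condition that $(1\,2)$ always accompanies the two nontrivial classes in $\langle u,v\rangle$ containing $(1\,2)$). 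A short case check confirms $N_{\mathbf{S}_6}(\langle u,v\rangle)\cong\mathbf{Z}_2\times\mathbf{S}_4$ of order $48$, so $N_{\mathbf{S}_6}(\langle u,v\rangle)/\langle u,v\rangle$ has order $12$ and is isomorphic to $\mathbf{S}_3\times\mathbf{Z}_2$.

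Finally I would verify that $\sigma(\Psi_2)$ and $\sigma(\Psi_3)$ lie in $N_{\mathbf{S}_6}(\langle u,v\rangle)$ and, together with $\langle u,v\rangle$, generate it. By the stated action, $\sigma(\Psi_2)=(4\,6)$ and $\sigma(\Psi_3)=(3\,4)(5\,6)$; both fix $\{1,2\}$ pointwise and permute $\{3,4,5,6\}$, hence normalize $V$ and therefore $\langle u,v\rangle$. One then checks that $\langle u,v,\sigma(\Psi_2),\sigma(\Psi_3)\rangle$ has order $48$: e.g. $\sigma(\Psi_2)\cdot u=(4\,6)(3\,5)(4\,6)=(3\,5)$ is a transposition in the $\{3,4,5,6\}$-block, and $(3\,5)$ together with $(3\,4)(5\,6)=\sigma(\Psi_3)$ and the even elements of $V$ generate the full $\mathbf{S}_{\{3,4,5,6\}}$ of order $24$, while $(1\,2)=u\cdot v\cdot\sigma(\Psi_3)$ (or a similar word) supplies the extra $\mathbf{Z}_2$ on $\{1,2\}$, giving index-$1$ in $N_{\mathbf{S}_6}(\langle u,v\rangle)$. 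Pulling back through $\sigma$ yields $\mathcal{N}_{Q_{S,G}}=\langle Q^*_{S,G},\Psi_2,\Psi_3\rangle$, as asserted.

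The main obstacle is not conceptual but bookkeeping: one must correctly translate between the permutation representation $\sigma$ on the six cone points (i.e.\ on $\{a_1,\dots,a_6\}$) and the explicit automorphisms of $H\cong\mathbf{Z}_p^5$ coming from the liftings $A,B$ of Lemma \ref{cable}, and then carry out the normalizer computation in $\mathbf{S}_6$ without sign or index errors — in particular, being careful that the relation "$(1\,2)$ accompanies exactly the $\langle u,v\rangle$-classes moving $1,2$" is what pins down the normalizer to order $48$ rather than something larger.
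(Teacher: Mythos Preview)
Your overall strategy --- transport the question to $\mathbf{S}_6$ via the isomorphism $\sigma$ of Proposition~\ref{tp} and compute the normalizer of $\langle u,v\rangle$ there --- is exactly the paper's approach. The error is in the computation of that normalizer: it has order $16$, not $48$, and the quotient $N_{\mathbf{S}_6}(\langle u,v\rangle)/\langle u,v\rangle$ is $\mathbb{Z}_2^2$, not $\mathbf{S}_3\times\mathbb{Z}_2$.

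The mistake is in passing from ``$g_2$ normalizes $V$ in $\mathbf{S}_{\{3,4,5,6\}}$'' to ``$(g_1,g_2)$ normalizes $\langle u,v\rangle$ in $\mathbf{S}_6$''. You note yourself that the $\{1,2\}$-part and the $\{3,4,5,6\}$-part of the elements of $\langle u,v\rangle$ are correlated, but you do not draw the correct consequence. The element $u=(3\,5)(4\,6)$ is the \emph{unique} nontrivial element of $\langle u,v\rangle$ fixing $1$ and $2$; since conjugation by any $g=(g_1,g_2)\in\mathbf{S}_{\{1,2\}}\times\mathbf{S}_{\{3,4,5,6\}}$ preserves ``fixes $1,2$'', a normalizing $g$ must satisfy $gug^{-1}=u$, i.e.\ $g_2$ must \emph{centralize} $(3\,5)(4\,6)$, not merely normalize $V$. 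The centralizer $C_{\mathbf{S}_{\{3,4,5,6\}}}((3\,5)(4\,6))$ is dihedral of order $8$, so $|N_{\mathbf{S}_6}(\langle u,v\rangle)|\leqslant 2\cdot 8=16$; equality holds because every such $g_2$ automatically sends $(3\,4)(5\,6)$ into $\{(3\,4)(5\,6),(3\,6)(4\,5)\}$. Your generation check at the end is accordingly also wrong: the elements $(3\,5)=\sigma(\Psi_2)\cdot u$ and $(3\,4)(5\,6)=\sigma(\Psi_3)$, together with $V$, all lie in that order-$8$ centralizer and cannot generate $\mathbf{S}_{\{3,4,5,6\}}$. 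What they do generate, together with $(1\,2)=v\cdot\sigma(\Psi_3)$, is the full normalizer of order $16$, which is the statement of the proposition.
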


\begin{proof}
We argue as done in Proposition \ref{nor}, after noticing that ${\mathcal N}_{Q_{S,G}}$ is isomorphic to the normalizer $N$ of $\langle u, v \rangle$ in $\mathbf{S}_6,$ which is given by 
$N=\langle u, v, \Psi_{2}=(4 \, 6), \Psi_{3}=(3 \, 4)(5 \, 6)\rangle$. Note that 
$N/\langle u, v \rangle \cong {\mathbb Z}_{2}^{2}$. 
\end{proof}

By Corollary \ref{coca7}, the number of pairwise topologically inequivalent actions $(S, N, G)$ in genus 
$(2p-1)(p-1)$ with a $\mathbb{Z}_p^2$-action of genus zero and $S/G$ of signature $(0; 2,2,p,2p)$ corresponds to the cadinality of $\mathscr{C}_p({Q_{S,G}})/{\mathcal N}_{Q_{S,G}}.$ The following proposition describes the members of $\mathscr{C}_p({Q_{S,G}}).$

\begin{prop} \label{verde4} If $p \geqslant 3$ is prime and  $K \in \mathscr{C}_p({Q_{S,G}})$ then one of the following statements holds.
\begin{enumerate}
\item There are $r, s \in \{0, \ldots, p-1\}$ satisfying that $r+s \in \{\tfrac{p-1}{2}, \tfrac{3p-1}{2}\}$ and $$K=K(r,s):=\langle a_1^ra_2^sa_3^{-1}, a_3a_5^{-1}, a_4a_6^{-1}\rangle.$$

\item $K$ agrees with one of the following
$$K_1:=\langle a_1a_2^{-1}, a_3 a_4^{-1}, a_5a_6^{-1} \rangle, \,\, K_2:=\langle a_1a_2^{-1}, a_3 a_6^{-1}, a_4a_5^{-1} \rangle,\,\, K_5:=\langle a_1a_2^{-1}, a_3 a_5^{-1}, a_4a_6^{-1} \, \rangle, \,\, K_6:=\langle a_1a_2, a_3 a_5^{-1}, a_4a_6^{-1} \rangle.$$

\item There is $r \in \{0, \ldots, p-1\}$ and either 
$$K=K_3(r):=\langle a_1^r a_3^{-1}a_4, a_1a_2, a_3a_6\rangle \,\, \mbox{ or } \,\, K=K_4(r):=\langle a_1^r a_3^{-1}a_6, a_1a_2, a_3a_4\rangle$$
\end{enumerate}

If $p =2$ and  $K \in \mathscr{C}_2({Q_{S,G}})$ then $K$ is one of the following
$$\bar{K}_1=\langle a_1a_2, a_3a_4, a_3a_5\rangle, \,\, \bar{K}_2=\langle a_1a_2, a_3a_4, a_1a_3a_5 \rangle,\,\,\bar{K}_3=\langle a_1a_2, a_3a_5, a_1a_3a_4\rangle \,\, \mbox{ or } \,\, \bar{K}_4=\langle a_1a_2, a_4a_5, a_1a_3a_4 \rangle,$$
\end{prop}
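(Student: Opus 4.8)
The plan is to classify all $K \in \mathscr{C}_p(Q_{S,G})$ by the same strategy used in the proof of Proposition \ref{verde}: realize $K$ as the kernel of a surjection $\theta:H \to \mathbb{Z}_p^2$, track how the conjugation action of $A$ and $B$ (described in Lemma \ref{cable}) forces relations among the values $\theta(a_i)$, and then read off the generators of $K$. First I would set $\phi_1:=\theta(a_1)$, which has order $p$ since $a_1 \notin K$, and split into cases according to whether $\theta(a_2) \in \langle \phi_1 \rangle$ or not. When $\theta(a_2) \notin \langle \phi_1\rangle$ I put $\phi_2:=\theta(a_2)$, so that $\{\phi_1,\phi_2\}$ is a basis of $\mathbb{Z}_p^2$, write $\theta(a_i)=\phi_1^{r_i}\phi_2^{s_i}$ for $i=3,\dots,6$, and impose: (i) the relation $a_1\cdots a_6=1$, which gives $\sum r_i \equiv 0$ and $\sum s_i \equiv 0 \bmod p$; (ii) $A$-invariance, i.e. $\theta \circ \Phi_A = \theta$ up to an automorphism of $\mathbb{Z}_p^2$ — concretely $\theta(a_3)=\theta(A(a_3))$ etc. with $A:(a_1,\dots,a_5)\mapsto(a_1,a_2,a_5,(a_1\cdots a_5)^{-1},a_3)$; and (iii) the analogous $B$-invariance with $B:(a_1,\dots,a_5)\mapsto(a_2,a_1,a_4,a_3,(a_1\cdots a_5)^{-1})$. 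Since $A$ fixes $a_1,a_2$ and $B$ swaps them, $Q_{S,G}^*$-invariance of $K$ is equivalent to $A$- and $B$-invariance of the tuple of exponents modulo simultaneous change of basis that fixes the span structure, so the constraints are genuinely linear/quadratic congruences in $r_i,s_i$.

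The bookkeeping then proceeds by subcases exactly as in Proposition \ref{verde}. The $A$-relations link $(r_5,s_5)$ to $(r_3,s_3)$, $(r_4,s_4)$ to $(r_5,s_5)$ via the $(a_1\cdots a_5)^{-1}$ entry, and $(r_3,s_3)$ back to $(r_4,s_4)$; combined with $\sum r_i=\sum s_i=0$ this already pins down most exponents in terms of one or two free parameters. Then the $B$-relations cut this down further, and each surviving case yields a family $K(r,s)$ (with $r+s \in \{\tfrac{p-1}{2},\tfrac{3p-1}{2}\}$, coming from a congruence like $2(r+s)+1 \equiv 0 \bmod p$), or one of the sporadic groups $K_1,K_2,K_5,K_6$ (where $\theta(a_1),\theta(a_2)$ are $\pm$ each other and the remaining data is rigid), or the one-parameter families $K_3(r),K_4(r)$ (where $\theta(a_2) \in \langle\phi_1\rangle$, so $\theta(a_2)=\phi_1^{-1}$ after the order-$p$ constraint, and the $A,B$-action leaves a single free exponent $r$). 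For each case I would record the generators in the normalized form stated, using the relation $a_1\cdots a_6=1$ to reduce a $4$-generator presentation of $\ker\theta$ to the $3$ generators displayed, precisely as the identity $a_1^{l_{n+1,1}}\cdots a_{n+1}^{-1}=a_1^{1+\cdots}\cdots(a_{m+1}\cdots a_n)$ was used in the proof of the theorem on $\mathcal{F}(p,n,m)$.

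For $p=2$ the group $Q_{S,G}$ is not a semidirect product (Lemma \ref{cable}(2): $A^2=a_1$, $(AB)^2=a_4a_5$), so the invariance conditions on $\theta$ are slightly different — one must use $\theta(A a_i A^{-1})=\theta(a_i)$ with the explicit conjugation formulas $Aa_3A^{-1}=a_5$, $Aa_4A^{-1}=a_1a_2a_3a_4a_5$, $Ba_1B=a_2$, $Ba_3B=a_4$, $Ba_5B=a_1a_2a_3a_4a_5$ — but since $\mathbb{Z}_2^2$ has very few elements the enumeration is short and yields exactly the four groups $\bar K_1,\dots,\bar K_4$; I would simply run through the $2^{\text{(number of free bits)}}$ possibilities directly. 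The main obstacle I anticipate is not conceptual but combinatorial: keeping the many subcases of part (1)–(3) organized so that no $Q_{S,G}^*$-invariant $K$ is missed and no spurious one is introduced, and in particular correctly handling the places where the $(a_1\cdots a_5)^{-1}$ term in the $A$- and $B$-actions couples the exponents nonlinearly. I would guard against errors by cross-checking the final list against the membership $K(r,s),K_i(r) \in \mathcal{F}(p,5,2)$ (each displayed generator must have trivial intersection with every $\langle a_j\rangle$) and by verifying that the sporadic $K_i$ are the $r=0$, $s=0$, or $r=s$ specializations of the $K(r,s)$ family, mirroring the closing remark of the proof of Proposition \ref{verde}.
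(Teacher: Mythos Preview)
Your plan matches the paper's proof exactly: split on whether $\theta(a_2)\in\langle\phi_1\rangle$; in the first branch use that $\Phi_A$ fixes $a_1,a_2$ so the induced automorphism of $\mathbb{Z}_p^2$ is the identity (hence $\theta(a_3)=\theta(a_5)$, $\theta(a_4)=\theta(a_6)$, and the $B$-swap gives $\theta(a_4)=\phi_1^s\phi_2^r$, with the product relation yielding $2(r+s)\equiv-1$); in the second branch set $\phi_2:=\theta(a_3)$ and run the subcases on $\theta(a_5)$ and $\theta(a_4)$, exactly as you describe. Two small corrections to your outline: the constraint $\theta(a_2)=\phi_1^{\pm1}$ in the second branch comes from $B$-invariance (since $\Phi_B$ swaps $a_1\leftrightarrow a_2$, the induced $\beta$ satisfies $\beta(\phi_1)=\phi_1^l$ and $\beta(\phi_1^l)=\phi_1$, forcing $l^2\equiv1$), not from an order-$p$ condition; and your proposed final cross-check will fail, because unlike in Proposition~\ref{verde} the sporadic $K_1,K_2,K_5,K_6$ here are \emph{not} specializations of the $K(r,s)$ family --- each $K(r,s)$ excludes $a_1a_2^{-1}$ (as $\theta(a_1),\theta(a_2)$ are independent there) while $a_1a_2^{\pm1}\in K_i$ for every sporadic $K_i$.
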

\begin{proof}
Let $K \in \mathscr{C}_p({Q_{S,G}})$ and let $\theta: H \to \mathbb{Z}_p^2$ be a  group epimorphism such that $K=\mbox{ker}(\theta).$ As $a_1 \notin K,$ we have that $\phi_1:=\theta(a_1)$ has order $p.$

\noindent
{\bf 1.} Assume $\theta(a_2) \notin \langle \phi_1 \rangle$ and write $\phi_2:=\theta(a_2)$ in such a way that $\langle \phi_1, \phi_2 \rangle \cong \mathbb{Z}_p^2.$ Set $\theta(a_3):=\phi_1^r\phi_2^s$ for $r,s \in \{0, \ldots, p-1\}$ that  are not simultaneously equal to zero. 

The fact that $A(a_3)=a_5, A(a_1)=a_1$ and $A(a_2)=a_2$ implies that $\phi_1^r\phi_2^s=\theta(a_3).$ Similarly, it can be seen that $\theta(a_6)=\theta(a_4).$ Now, the fact that $B(a_1)=a_2$ and $B(a_3)=a_4$ implies that $\theta(a_4)=\phi_2^r \phi_1^s.$ It follows that $\theta=(\phi_1, \phi_2, \phi_1^r\phi_2^s, \phi_1^s\phi_2^r, \phi_1^r\phi_2^s,  \phi_1^s\phi_2^r)$, and therefore $K=K(r,s).$ The relation $a_1 \cdots \, a_6=1$ implies that $2(r+s) \equiv -1 \mbox{ mod }p$ and therefore $r+s \in \{\tfrac{p-1}{2}, \tfrac{3p-1}{2}\}$.

\noindent
{\bf 2.} Assume $\theta(a_2) \in \langle \phi_1 \rangle$ and write $\theta(a_2)=\phi_1^{l}$ for some $l \in \{1, \ldots, p-1\}.$ We claim that $\theta(a_3) \notin \langle \phi_1 \rangle.$ In fact, otherwise, if we write $\theta(a_3)=\phi_1^m$ then $\theta(a_5)=\phi_1^{lm}$ and similarly $\theta(a_6)=\phi_1^{ml}.$ This contradicts the surjectivity of $\theta.$ Thus, we write  $\phi_2:=\theta(a_3)$ and therefore $\langle \phi_1, \phi_2 \rangle \cong \mathbb{Z}_p^2.$ Note that $\phi_1^l=\theta(a_2).$ As $B$ has order two, we deduce that $l=1$ or $l=p-1.$ Write $\theta(a_5)=\phi_1^r \phi_2^s$ for some $r,s \in \{0, \ldots, p-1\}$ that are not simultaneously zero. The equality $\theta(a_5)=\phi_1^r \phi_2^s$ coupled with the fact that $A$ has order two imply that $r(1+s)\equiv 0 \mbox{ mod }p,$ and that $s=1$ or $s=p-1.$

\noindent
{\bf 2.1} Assume $s=1.$ It follows that $r=0$ and $\theta(a_5)=\phi_2$. Write $\theta(a_4):=\phi_1^u\phi_2^v$ then $\theta(a_6)=\phi_1^u\phi_2^v$ and $u(l+v)\equiv 0 \mbox{ mod }p,$ and $v=1$ or $v=p-1.$ The fact that $a_1 \cdots \, a_6=1$ implies that $v=p-1$ and therefore 
\begin{enumerate}
\item $u=0$ and $l=p-1$, and then $\theta=(\phi_1, \phi_1^{-1}, \phi_2, \phi_2^{-1},\phi_2, \phi_2^{-1})$ and $K=K_6,$ or 
\item $u=p-1$ and $l=1$, and then $\theta=(\phi_1, \phi_1, \phi_2, \phi_1^{-1}\phi_2^{-1},\phi_2, \phi_1^{-1}\phi_2^{-1})$  and $K=K_5$.  
\end{enumerate}

\noindent
{\bf 2.2} Assume $s=p-1.$ It follows that $\theta(a_5)=\phi_1^r\phi_2^{-1}.$ If we write $\theta(a_4):=\phi_1^{u}\phi_2^{v}$ then, by proceeding as in the previous case, we obtain that $\theta(a_6)=\phi_1^{u+rv}\phi_2^{-v}$ where $v=1$ or $v=p-1, u(l+v) \equiv 0 \mbox{ mod } p$ and $2u \equiv r(l-v) \mbox{ mod }p.$ Moreover, The fact that $a_1 \cdots \, a_6=1$ implies that $1+l+r+2u+rv \equiv 0 \mbox{ mod }p.$

Assume $v=1.$ Then
\begin{enumerate}
\item $u=0, l=1, r=p-1$ and $\theta=(\phi_1, \phi_1, \phi_2, \phi_2, \phi_1^{-1}\phi_2^{-1}, \phi_1^{-1}\phi_2^{-1})$ and $K=K_1,$ or
\item  $l=p-1, u=-r$ and $\theta=(\phi_1, \phi_1^{-1}, \phi_2, \phi_1^{-r}\phi_2, \phi_1^{r}\phi_2^{-1},\phi_2^{-1})$ and $K=K_3(r).$
\end{enumerate}

Assume $v=p-1.$ Then
\begin{enumerate}
\item $u=r=p-1, l=1$ and $\theta=(\phi_1, \phi_1, \phi_2,  \phi_1^{-1}\phi_2^{-1}, \phi_1^{-1}\phi_2^{-1}, \phi_2)$ and $K=K_2,$ or

\item $l=p-1, u=0$ and $\theta=(\phi_1, \phi_1^{-1}, \phi_2, \phi_2^{-1}, \phi_1^{r}\phi_2^{-1},\phi_1^{-r}\phi_2)$ and $K=K_4(r).$

\end{enumerate}
The proof of the case $p=2$ is analogous. This case yields only four epimorphisms $\bar{\theta}: H \to \mathbb{Z}_2^2$ given by $$\bar{\theta}_1=(\phi_1, \phi_1, \phi_2, \phi_2, \phi_2, \phi_2), \,\, \bar{\theta}_2=(\phi_1, \phi_1, \phi_2, \phi_2, \phi_1\phi_2, \phi_1\phi_2),$$ $$ \bar{\theta}_3=(\phi_1, \phi_1, \phi_2, \phi_1\phi_2, \phi_2, \phi_1\phi_2) \,\, \mbox{ and } \,\, \bar{\theta}_4=(\phi_1, \phi_1, \phi_2, \phi_1\phi_2, \phi_1\phi_2, \phi_2),$$and the proof follows after noticing that $\bar{K}_j=\mbox{ker}(\bar{\theta}_j)$ for $j=1,2,3,4.$ 
\end{proof}

\begin{rema} It is worth emphasizing that whereas the epimorphisms $\bar{\theta}_2, \bar{\theta}_3$ and $\bar{\theta}_4$ define $\mathbb{Z}_2^2$-actions on Riemann surfaces $S$ of genus three that are topologically equivalent, they are not topologically equivalent as triples $(S, N, G).$
\end{rema}

\begin{theo}\label{pina} Let $p \geqslant 2$ be a prime number. The cardinality of $\mathscr{C}_2(Q_{S,G})/{\mathcal N}_{Q_{S,G}}$ is 3 if $p=2$, and $p+4$ otherwise. 
\end{theo}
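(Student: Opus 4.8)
The plan is to reduce the statement to a finite orbit count. By Corollary~\ref{coca7} the quantity in question is the number of topologically inequivalent triples with the prescribed permutational action, so it suffices to count the orbits of ${\mathcal N}_{Q_{S,G}}$ on $\mathscr{C}_p(Q_{S,G})$. By Proposition~\ref{nor2}, ${\mathcal N}_{Q_{S,G}}=\langle Q_{S,G}^{*},\Psi_2,\Psi_3\rangle$; and since every member of $\mathscr{C}_p(Q_{S,G})$ is $Q_{S,G}^{*}$-invariant by definition, this action factors through ${\mathcal N}_{Q_{S,G}}/Q_{S,G}^{*}\cong\mathbb{Z}_2^2$, with the two generating involutions represented by $\Psi_2$ and $\Psi_3$. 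Transporting along $\mathrm{Aut}_g(H)\cong\mathbf{S}_6$, these act on the subscripts of $a_1,\dots,a_6$ as the permutations $(4\,6)$ and $(3\,4)(5\,6)$. The remaining work is mechanical: represent each $K$ as $\ker\theta$ with $\theta$ the explicit six-tuple from the proof of Proposition~\ref{verde4}, use $\Psi(\ker\theta)=\ker(\theta\circ\Psi^{-1})$, bring $\theta\circ\Psi^{-1}$ back to the normal form of Proposition~\ref{verde4} by composing with an automorphism of $\mathbb{Z}_p^2$ (which leaves the kernel unchanged), identify the resulting member of the list, and count orbits.

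Carrying this out for $p\geqslant 3$, I expect the three families of Proposition~\ref{verde4} to behave as follows. On the groups $K(r,s)$ of part~(1), $\Psi_2$ should act trivially (their $\theta$ already has $\theta(a_4)=\theta(a_6)$) while $\Psi_3$ interchanges $K(r,s)$ with $K(s,r)$; as $2(r+s)\equiv-1\pmod p$ has exactly one solution with $r=s$ — located in $\{r+s=\tfrac{p-1}{2}\}$ when $p\equiv1\pmod4$ and in $\{r+s=\tfrac{3p-1}{2}\}$ when $p\equiv3\pmod4$ — this family contributes $\tfrac{p+1}{2}$ orbits. On the groups $K_3(r),K_4(r)$ of part~(3), $\Psi_2$ should interchange $K_3(r)$ with $K_4(r)$ and $\Psi_3$ should send $K_3(r)\mapsto K_3(-r)$ and $K_4(r)\mapsto K_4(-r)$; this produces the single two-element orbit $\{K_3(0),K_4(0)\}$ together with $\tfrac{p-1}{2}$ four-element orbits indexed by the pairs $\{r,-r\}$ with $r\neq0$, hence again $\tfrac{p+1}{2}$ orbits. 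A direct check on the four groups $K_1,K_2,K_5,K_6$ of part~(2) should give the orbit $\{K_1,K_2\}$ and the fixed points $K_5,K_6$, i.e. $3$ orbits. Since type~(1) is exactly the set of $K$ with $\theta(a_2)\notin\langle\theta(a_1)\rangle$, and within its complement the assignments $r\mapsto K_3(r)$, $r\mapsto K_4(r)$ are injective and, together with $K_1,K_2,K_5,K_6$, pairwise distinct for $p\geqslant 3$, the three families are disjoint, so the orbits do not merge across families and the total is $\tfrac{p+1}{2}+\tfrac{p+1}{2}+3=p+4$.

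For $p=2$ the list of Proposition~\ref{verde4} reduces to the four groups $\bar K_1,\bar K_2,\bar K_3,\bar K_4$, and applying the same two index permutations to their defining epimorphisms should show that $\bar K_1$ and $\bar K_3$ are fixed while $\bar K_2$ and $\bar K_4$ form one orbit of size two, giving $3$ orbits.

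I do not anticipate a conceptual obstacle; the difficulty is one of bookkeeping. For each of the $3p+4$ subgroups in the list one must transport $\theta$ under $(4\,6)$ and $(3\,4)(5\,6)$ and then recover the normal form of Proposition~\ref{verde4}, and the choice of renormalizing automorphism of $\mathbb{Z}_p^2$ is where subscript and sign errors are easiest to make. One must also confirm that the three parametrized families genuinely comprise $3p+4$ pairwise distinct subgroups, since the value $p+4$ depends on it, and pin down — according to $p\bmod 4$ — which of the two ranges for $r+s$ contains the unique $\Psi_3$-fixed group $K(r,r)$ of type~(1). Everything else is a finite, if tedious, verification.
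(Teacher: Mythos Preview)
Your proposal is correct and follows essentially the same approach as the paper's own proof: reduce via Proposition~\ref{nor2} to the action of $\langle\Psi_2,\Psi_3\rangle$ on the list of Proposition~\ref{verde4}, then verify $\Psi_2$ fixes $K(r,s)$ while $\Psi_3$ swaps $K(r,s)\leftrightarrow K(s,r)$ (giving $\tfrac{p+1}{2}$ orbits), $\Psi_2$ swaps $K_3(r)\leftrightarrow K_4(r)$ while $\Psi_3$ sends $K_3(r)\mapsto K_3(p-r)$ (again $\tfrac{p+1}{2}$ orbits), and $\{K_1,K_2\},\{K_5\},\{K_6\}$ contribute $3$ more. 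Your additional remarks on disjointness of the families and the location of the $\Psi_3$-fixed $K(r,r)$ according to $p\bmod 4$ are extra care the paper omits, but the argument is otherwise the same.
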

\begin{proof}
By Proposition \ref{nor2}, we only need to count the number of orbits of the action of $\langle \Psi_2=(4 \, 6), \Psi_3=(3 \, 4)(5 \, 6) \rangle$  on $\mathscr{C}_p(Q_{S,G}).$ Assume $p \neq 2,$ and let $r, s \in \{0, \ldots, p-1\}$ such that  $r+s \in \{\tfrac{p-1}{2}, \tfrac{3p-1}{2}\}$. Note that $K(r,s)$ remains invariant under the action of $\Psi_2,$ and $\Psi_3(K(r,s))=\langle a_1^ra_2^sa_4^{-1}, a_3a_5^{-1}, a_4a_6^{-1}\rangle.$
If $\tau \in \{1, \ldots, p-1\}$ satisfies that $2 \tau \equiv 1 \mbox{ mod }p$ then one has that $(a_1^ra_2^sa_4^{-4} \cdot (a_3a_5^{-1} \cdot  a_4a_6^{-1})^{\tau})^{-1}=a_1^sa_2^ra_3^{-1}.$ This shows that $\Psi_3(K(r,s))=K(s,r)$ and therefore these groups form $\alpha$ orbits, where $\alpha$ is the number of pairs $(r,s)$ such that $r,s \in \{0, \ldots, p-1\}$ satisfies $r \leqslant s$ and $r+s \in \{\tfrac{p-1}{2}, \tfrac{3p-1}{2}\}$. A routine computation shows that $\alpha=\tfrac{p+1}{2}$. Similarly, if $r \in \{0, \ldots, p-1\}$ then it can be seen that $\Psi_2(K_3(r))=K_4(r)$ and $\Psi_3(K_3(r))=K_3(p-r)$. Thus, the groups $K_3(r)$ and $K_4(r)$ form $\tfrac{p+1}{2}$ orbits, represented by $K_3(r)$ where $r \in \{0, \ldots, \tfrac{p-1}{2}\}.$ Finally,  $K_1$ and $K_2$ form a single orbit, whereas $K_5$ and $K_6$ form one orbit each. All the above show that there are precisely $p+4$ orbits.  By proceeding analogously, it can be seen that if $p=2$ then $\bar{K}_2$ and $\bar{K}_4$ form a single orbit, and $\bar{K}_1$ and $\bar{K}_3$ form one orbit each.  This completes the proof. 
\end{proof}

\begin{example}
Theorem \ref{pina} says that there are exactly three pairwise topologically inequivalent actions $(S, N, G)$ where $S$ is a compact Riemann surface of genus three, $(S, N)$ is a $\mathbb{Z}_2^2$-action of signature $(0; 2^6)$ and $G/N\cong \mathbb{Z}_2^2.$ The signature of $S/G$ is $(0; 2,2,2,4).$ We denote by $\mathcal{F}_j$ the complex-one dimensional family determined by $\bar{K}_j$ and by $G_j$ (instead of $G$) the corresponding group. We have that $G_j \cong Q_{S, G}/\bar{K}_j$ and therefore Lemma \ref{cable} implies that $$G_1 \cong \langle a_3,A,B: a_{3}^{2}=A^4=B^{2}= (AB)^{2}= [A, a_3]=[B, a_3]=1\rangle \cong \langle a_3 \rangle \times \langle A, B \rangle \cong \mathbb{Z}_2 \times \mathbf{D}_4$$ $$G_2 \cong \langle a_1, a_3,A,B: A^2=(AB)^2=a_1, a_1^2=a_3^2=[a_1,a_3]=B^2= (Aa_3)^2=[B, a_3]=[A, B]=1\rangle
 \cong \langle B \rangle \times \langle A, a_3 \rangle \cong \mathbb{Z}_2 \times \mathbf{D}_4$$ $$G_3\cong \langle a_3,A,B: a_{3}^{2}=A^4=B^{2}= (AB)^{2}=[a_3, A]=1, Ba_3B=Aa_3\rangle =\langle A, a_3 \rangle \rtimes \langle B \rangle \cong (\mathbb{Z}_4 \times \mathbb{Z}_2) \rtimes \mathbb{Z}_2.$$ 
 
As an application of Proposition \ref{pp5} we obtain the following algebraic descriptions for the families $\mathcal{F}_j$. 
\begin{equation*}
\mathcal{F}_1:\left\{ \begin{array}{l}
y_{1}^{2}=(x^2-1)(x^2-\lambda^2)\\
y_{2}^{2}=x\\
\end{array}
\right.  \,\,\, \mathcal{F}_2:\left\{ \begin{array}{l}
y_{1}^{2}=(x^2-1)(x^2-\lambda^2)\\
y_{2}^{2}=x(x+1)(x+\lambda)\\
\end{array}
\right. \,\,\, \mathcal{F}_3:\left\{ \begin{array}{l}
y_{1}^{2}=(x^2-1)(x^2-\lambda^2)\\
y_{2}^{2}=x(x^2-\lambda^2)\\
\end{array}
\right.
\end{equation*}where $\lambda \in \mathbb{C}-\{0, \pm 1\}.$ Observe that the  (generic) members of $\mathcal{F}_2$ and $\mathcal{F}_3$ are non-hyperelliptic, whereas the ones of $\mathcal{F}_1$ are; the hyperelliptic involution being $(x, y_2, y_2) \mapsto (x, -y_2, y_2)$. 

Note that $\bar{K}_1$ is $D-$invariant (see Remark \ref{dd}) and therefore the member of $\mathcal{F}_1$ obtained by taking $\lambda =i$ is endowed with a group of automorphisms of order 32 acting with signature $(0; 2,4,8)$, and is represented by $y_1^2=y_2^8-1$ (this Riemann surfaces is known as the Accola-Maclachlan curve of genus three).

We remark that there is only one topological class of actions of $\mathbb{Z}_2 \times \mathbf{D}_4$ in genus three with signature $(0; 2,2,2,4)$. In particular, if $S_j$ belongs to $\mathcal{F}_j$ then the pairs $(S_1, G_1)$ and $(S_2, G_2)$ are topologically equivalent. However, the triples $(S_1, N_1, G_1)$ and $(S_2, N_2, G_2)$ are not, as this example shows. 
\end{example}

We extend the previous example to the general case.

\begin{prop}
Let $p \geqslant 3$ be a prime number and let $S$ be a compact Riemann surface of genus $(2p-1)(p-1)$ endowed with a group of automorphisms $G$ isomorphic to a semidirect product $\mathbb{Z}_p^2 \rtimes \mathbb{Z}_2^2$. Let $N \cong \mathbb{Z}_p^2$ be the normal Sylow $p$-subgroup of $G.$  If $S/N$ has signature $(0; p^6)$ then one of the following statements hold.  
\begin{enumerate}
\item $G$ is isomorphic to $\mathbb{Z}_{2p} \times \mathbf{D}_p$ and there are precisely $\tfrac{p+5}{2}$ pairwise topological inequivalent triples $(S, N, G).$
\item $G$ is isomorphic to $\mathbf{D}_p \times \mathbf{D}_p$ and there are precisely $\tfrac{p+1}{2}$ pairwise topological inequivalent triples $(S, N, G).$
\item $G$ is isomorphic to $\mathbb{Z}_2 \times (\mathbb{Z}_p^2 \rtimes_{(-1,-1)} \mathbb{Z}_2)$ and there is only one  topological class of triples $(S, N, G).$
\end{enumerate}
\end{prop}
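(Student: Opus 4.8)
The plan is to identify the finite group $G_K=Q_{S,G}/K$ attached to each $K\in\mathscr{C}_p(Q_{S,G})$, sort the $\mathcal{N}_{Q_{S,G}}$-orbits of $\mathscr{C}_p(Q_{S,G})$ according to the isomorphism type of $G_K$, and then read off the three counts from the proof of Theorem \ref{pina}. First I would reduce to the framework of this subsection: the hypotheses say that $(S,N)$ is a $\mathbb{Z}_p^2$-action of signature $(0;p^6)$ and that $N\lhd G$ with $G/N\cong\mathbb{Z}_2^2$ acting on the six cone points of $S/N$ so that $S/G$ has four cone points, which is exactly the situation treated above. Hence $S\cong C_p(\Lambda)/K$, $N\cong H/K$ and $G\cong G_K=Q_{S,G}/K$ for some $K\in\mathscr{C}_p(Q_{S,G})$, with $Q_{S,G}\cong\mathbb{Z}_p^5\rtimes\mathbb{Z}_2^2$ as in Lemma \ref{cable}(1); since $K\leq H$ is normal in $Q_{S,G}$ we get $G_K=N_K\rtimes\langle\bar A,\bar B\rangle$ with $N_K=H/K\cong\mathbb{Z}_p^2$ and $\langle\bar A,\bar B\rangle\cong\mathbb{Z}_2^2$, where $\bar A,\bar B$ are the images of the generators $A,B$. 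By Corollary \ref{coca7} the topological classes of such triples correspond bijectively to the $\mathcal{N}_{Q_{S,G}}$-orbits of $\mathscr{C}_p(Q_{S,G})$, so it suffices to compute $G_K$ for one representative per orbit; the proof of Theorem \ref{pina} already produces these representatives, namely the groups $K(r,s)$ with $2(r+s)\equiv-1\bmod p$, the groups $K_3(r)$, and $K_1,K_5,K_6$.

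Next I would compute the conjugation action of $\bar A$ and $\bar B$ on $N_K$ by composing the permutation of $\{a_1,\dots,a_6\}$ induced by $A$ (resp.\ $B$) in Lemma \ref{cable}(1) with the explicit epimorphism $\theta$ with kernel $K$ displayed in the proof of Proposition \ref{verde4}: concretely the image of $A$ in $G_K$ sends $\theta(a_i)$ to $\theta(a_{\sigma(A)(i)})$, and likewise for $B$. Running through the representatives one finds: for $K(r,s)$, $\bar A$ fixes $\phi_1,\phi_2$ while $\bar B$ interchanges them, so $\bar A$ is central of order $2$, and diagonalizing the interchange over $\mathbb{F}_p$ (possible since $p$ is odd) gives $N_K\rtimes\langle\bar B\rangle\cong\mathbb{Z}_p\times\mathbf{D}_p$, whence $G_K\cong\mathbb{Z}_{2p}\times\mathbf{D}_p$; for $K_1$ (and, by a mirror computation, $K_5$) one of $\bar A,\bar B$ acts trivially on $N_K$ and the other by an involution of $\mathrm{GL}_2(\mathbb{F}_p)$ with eigenvalues $1$ and $-1$, which is again diagonalizable, so $G_K\cong\mathbb{Z}_{2p}\times\mathbf{D}_p$; for $K_6$ one of $\bar A,\bar B$ is trivial and the other is $-\mathrm{id}$, so $G_K\cong\mathbb{Z}_2\times(\mathbb{Z}_p^2\rtimes_{(-1,-1)}\mathbb{Z}_2)$; and for $K_3(r)$ both $\bar A$ and $\bar B$ act by involutions with eigenvalues $\pm1$ whose $(+1)$-eigenlines are distinct and with $\bar A\bar B=-\mathrm{id}$, so writing $N_K=\langle v\rangle\oplus\langle w\rangle$ with $\langle v\rangle$ the $(+1)$-eigenline of $\bar A$ (inverted by $\bar B$) and $\langle w\rangle$ the $(-1)$-eigenline of $\bar A$ (fixed by $\bar B$), the subgroups $\langle v,\bar B\rangle$ and $\langle w,\bar A\rangle$ are commuting copies of $\mathbf{D}_p$ with trivial intersection, whence $G_K\cong\mathbf{D}_p\times\mathbf{D}_p$.

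Then I would check that the three groups $\mathbb{Z}_{2p}\times\mathbf{D}_p$, $\mathbf{D}_p\times\mathbf{D}_p$ and $\mathbb{Z}_2\times(\mathbb{Z}_p^2\rtimes_{(-1,-1)}\mathbb{Z}_2)$ are pairwise non-isomorphic (the first has a central element of order $p$; the second is centreless because $p$ is odd; the third has non-trivial centre but no central element of order $p$), so that the trichotomy is genuinely a classification. Finally I would tally, using the orbit list in the proof of Theorem \ref{pina}: the $\tfrac{p+1}{2}$ orbits of the $K(r,s)$, together with the orbit $\{K_1,K_2\}$ and the orbit $\{K_5\}$, contribute $\tfrac{p+1}{2}+2=\tfrac{p+5}{2}$ triples with $G\cong\mathbb{Z}_{2p}\times\mathbf{D}_p$; the $\tfrac{p+1}{2}$ orbits $\{K_3(r),K_4(r)\}$ contribute $\tfrac{p+1}{2}$ triples with $G\cong\mathbf{D}_p\times\mathbf{D}_p$; and the orbit $\{K_6\}$ contributes the single triple with $G\cong\mathbb{Z}_2\times(\mathbb{Z}_p^2\rtimes_{(-1,-1)}\mathbb{Z}_2)$. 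These numbers sum to $p+4$, in agreement with Theorem \ref{pina}, and this is exactly the assertion of the proposition.

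The hard part will be the bookkeeping in the second step: pairing each explicit $\theta$ from Proposition \ref{verde4} with the correct permutation of the $a_i$ and carrying out the $\mathbb{F}_p$ linear algebra without sign slips. The only conceptually delicate identification is the $\mathbf{D}_p\times\mathbf{D}_p$ case, where one must recognise that the two involutions of $G/N$ act by inversion on complementary eigenlines and therefore generate commuting dihedral subgroups whose product exhausts $G_K$; here it helps to recall that for odd $p$ every involution of $\mathrm{GL}_2(\mathbb{F}_p)$ is conjugate to $\mathrm{id}$, $-\mathrm{id}$ or $\mathrm{diag}(1,-1)$, which makes all of the structure computations routine.
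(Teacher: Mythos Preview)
Your proposal is correct and follows essentially the same approach as the paper: both arguments reduce to computing the quotient $G_K=Q_{S,G}/K$ for each $\mathcal{N}_{Q_{S,G}}$-orbit representative from Proposition~\ref{verde4} and Theorem~\ref{pina}, and then tallying the orbits by isomorphism type. The only cosmetic difference is that the paper writes out explicit presentations and names generators of the direct factors (e.g.\ $\langle A,a_1a_2\rangle\times\langle B,a_1a_2^{-1}\rangle$), whereas you organize the same computation via the eigenvalue decomposition of the $\mathbb{Z}_2^2$-action on $N_K\cong\mathbb{F}_p^2$; you also add a short centre computation to confirm the three groups are pairwise non-isomorphic, which the paper leaves implicit.
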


\begin{proof}
We only need to study the quotients $Q_{S, G}/K$, where, by Lemma \ref{cable}, we have that$$Q_{S, G} =\langle a_1, \ldots, a_6, A, B, : a_j^p=[a_i, a_j]= a_1 \cdots \, a_6= A^2=B^2=(AB)^2= 1, $$ $$[A, a_1]=[A, a_2]=1, Aa_3A=a_5, Aa_4A=a_6, Ba_1B=a_2,Ba_3B=a_4, Ba_5B=a_6\rangle,$$and $K$ runs over the subgroups given in Proposition \ref{verde4} up to ${\mathcal N}_{Q_{S,G}}$-action (see the proof of Proposition \ref{pina}). 

Let $r, s \in \{0, \ldots, p-1\}$ be integers satisfying that $r+s \in \{\tfrac{p-1}{2}, \tfrac{3p-1}{2}\}.$ Then $$G(r,s):=Q_{S, G}/K(r,s) \cong \langle a_1, a_2, A, B : \dots,  [A, a_1]=[A, a_2]=1, Ba_1B=a_2\rangle,$$ which is isomorphic to $\langle A, a_1a_2 \rangle \times \langle B, a_1a_2^{-1}\rangle \cong \mathbb{Z}_{2p}\times \mathbf{D}_p.$ Similarly, for each $r \in \{0, \ldots, \tfrac{p-1}{2}\}$ we have that $$G_3(r):=Q_{S, G}/K_3(r)\cong\langle a_1, a_3, A, B : \ldots, [A, a_1]=1, Aa_3A=a_1^ra_3^{-1}, Ba_1B=a_1^{-1}, Ba_3B=a_1^{-r}a_3 \rangle$$which is isomorphic to $\langle A, a_1^ra_3^{-2} \rangle \times \langle B, a_1\rangle \cong \mathbf{D}_{p}\times \mathbf{D}_p,$ and
$$G_6:=Q_{Q, S}/K_6 \cong \langle a_1, a_3, A, B: \ldots, [A, a_1]=[A, a_3]=1, Ba_1B=a_1^{-1}, Ba_3B=a_3^{-1}\rangle$$ is  isomorphic to $\langle A \rangle \times \langle a_1, a_3, B\rangle \cong \mathbb{Z}_2 \times (\mathbb{Z}_p \rtimes_{(-1,-1)}\mathbb{Z}_2).$ Finally,  by proceeding analogously, we obtain that $G_1=Q_{Q, S}/K_1$ and $G_5=Q_{Q, S}/K_5$ are isomorphic to $\mathbb{Z}_{2p} \times \mathbf{D}_p$ and the proof is done.
\end{proof}


%

\end{document}